\newtheorem{theorem}{Theorem}[section]
\newtheorem{proposition}[theorem]{Proposition}
\newtheorem{lemma}[theorem]{Lemma}
\newtheorem{corollary}[theorem]{Corollary}
\theoremstyle{definition}
\newtheorem{example}[theorem]{Example}
\theoremstyle{remark}
\newtheorem{remark}[theorem]{Remark}
\numberwithin{equation}{section}
\newcommand{\C}{\mathbb C}
\newcommand{\R}{\mathbb R}
\newcommand{\N}{\mathbb N}
\newcommand{\Sb}{\mathbb{S}}
\newcommand{\Eb}{\mathbb E}
\newcommand{\Fc}{\mathcal{F}}
\newcommand{\fr}{\mathcal{F}}
\newcommand{\ip}[2]{\langle#1,#2\rangle}
\newcommand{\<}{\langle}
\renewcommand{\>}{\rangle}
\newcommand{\tr}{\text{Tr}}
\newcommand{\diag}{\operatorname{diag}}
\newcommand{\Vol}{\operatorname{Vol}}
\newcommand{\sca}{\mathcal{SC}}
\newcommand{\co}{\operatorname{co}}
\newcommand{\vphi}{\varphi}
\newcommand{\veps}{\varepsilon}
\newcommand{\FP}{\mathbb F\mathbb P}
\newcommand{\one}{{\mathbf 1}}
\newcommand{\Sym}{\operatorname{Sym}}
\newcommand{\Diag}{\operatorname{Diag}}
\title{Measures of scalability}
\author{Xuemei Chen}
\address{Xuemei Chen\\
Department of Mathematics\\
University of Maryland\\
College Park, MD 20742 USA}
\email{xuemeic@math.umd.edu}
\author{Gitta Kutyniok}
\address{Gitta Kutyniok\\
Institut f\"ur Mathematik\\
Technische Universit\"at Berlin \\
Strasse des 17. Juni 136\\
10623 Berlin, Germany}
\email{kutyniok@math.tu-berlin.de}
\author{Kasso A.~Okoudjou}
\address{Kasso A.~Okoudjou\\
Department of Mathematics\\
University of Maryland\\
College Park, MD 20742 USA}
\email{kasso@math.umd.edu}
\author{Friedrich Philipp}
\address{Friedrich Philipp\\
Institut f\"{u}r Mathematik\\
Technische Universit\"{a}t Berlin\\
Strasse des 17. Juni 136\\
D 10623 Berlin, Germany}
\email{philipp@math.tu-berlin.de}
\author{Rongrong Wang}
\address{Rongrong Wang\\
Department of Mathematics\\
University of British Columbia\\
Vancouver, BC V6T1Z2 Canada}
\email{rongwang@math.ubc.ca}
\subjclass[2000]{Primary 42C15; Secondary 52A20, 52B11}
\keywords{Convex Geometry, Quality Measures, Parseval frame, Scalable frame}
\begin{document}
\begin{abstract}
Scalable frames are frames with the property that the frame vectors can be rescaled resulting in tight frames. However, if a frame is not scalable, one has to aim for an approximate procedure. For this, in this paper we introduce three novel quantitative measures of the closeness to scalability for frames in finite dimensional real Euclidean spaces. Besides the natural measure of scalability given by the distance of a frame to the set of scalable frames, another measure is obtained by optimizing a quadratic functional, while the third is given by the volume of the ellipsoid of minimal volume containing the symmetrized frame. After proving that these measures are equivalent in a certain sense, we establish bounds on the probability of a randomly selected frame to be scalable. In the process, we also derive new necessary and sufficient conditions for a frame to be scalable.
\end{abstract}

\date{\today}

\maketitle  \pagestyle{myheadings} \thispagestyle{plain}
\markboth{X. CHEN, G. KUTYNIOK, K.  A. OKOUDJOU, F.  PHILIPP, R. WANG}{MEASURES OF SCALABILITY}

\section{Introduction}\label{sec1}
During the last years, frames have had a tremendous impact on applications due to their unique ability to deliver redundant, yet stable expansions. The redundancy of a frame is typically utilized by applications which either require robustness of the frame coefficients to noise, erasures, quantization, etc. or  require sparse expansions in the frame.
More precisely, letting $\Phi=\{\varphi_i\}_{i=1}^{M}\subset \R^N$ be a frame, either {\it decompositions} into a sequence of frame coefficients of a signal $x \in \R^N$, which is the image of $x$ under the analysis operator $T : \R^N \to \R^M$, $x \mapsto (\langle x , \varphi_i \rangle)_{i=1}^M$, are exploited by applications such as telecommunications and imaging sciences, or {\it expansions} in terms of the frame, i.e., $x = \sum_{i=1}^M c_i \varphi_i$ with suitable choice of coefficients $(c_i)_{i=1}^M$, are required by applications such as efficient PDE solvers. Intriguingly, the novel area of compressed sensing is based on the fact that typically signals exhibit a sparse expansion in a frame, which is nowadays considered the standard paradigm in data processing. Some compressed sensing applications also `hope' that the sequence of frame coefficients itself is sparse; a connection deeply studied in a series of papers on {\it cosparsity} (cf. \cite{NDEG13}).

The discussed applications certainly require stability, numerically as well as  theoretically. For instance, notice 
that most results in compressed sensing are stated for tight frames, i.e., for optimal stability. It is known that such frames -- in the case
of normalized vectors -- can be characterized by the frame potential (see, e.g., \cite{BF,cfm11,fjko})
and construction methods have been derived (cf. \cite{cfmmn} and \cite{str12} for an algebro-geometric point of view).
However, a crucial question remains: Given a frame with desirable properties, can we turn it into a tight frame? The immediate answer is yes, since it can easily be shown that applying $S^{-1/2}$ to each frame element, $S : \R^N \to \R^N$ denoting the frame operator $Sx = \sum_{i=1}^M \langle x , \varphi_i \rangle \varphi_i$, produces a Parseval frame. Thinking further one however realizes a serious problem with this seemingly elegant approach; it typically completely destroys any properties of the frame for which it was carefully designed before. Thus, unless we are merely interested in theoretical considerations, this approach is unacceptable.

Trying to be as careful as possible, the most noninvasive approach seems  to merely scale
each frame vector, i.e., multiply it by a scalar. And, indeed, almost all frame properties one can
think of such as erasure resilience or sparse expansions are left untouched by this modification.
In fact, this approach is currently extensively studied under the theme of scalable frames.

\subsection{Scalability of Frames}
The notion of a {\it scalable frame} was first introduced in \cite{kopt13} as a frame whose frame vectors
can be rescaled to yield a tight frame. Recall that a sequence $\Phi=\{\varphi_i\}_{i=1}^{M}\subset \R^N$ forms a {\it frame} provided that
$$
A\|x\|^2 \leq \sum_{i=1}^{M}|\ip{x}{\varphi_i}|^{2}\leq B\|x\|^2
$$
for all $x\in\R^N$, where $A$ and $B$ are called the \emph{frame bounds}. One often also writes $\Phi$ for the $N\times M$ matrix whose $i$th column is the vector $\varphi_i$. When $A=B$, the frame is called a {\it tight frame}. Furthermore, $A=B=1$ produces a {\it Parseval frame}. In the sequel, the set of frames  with $M$ vectors in $\R^N$ will be denoted by $\fr(M,N)$. We refer to \cite{co03} for an introduction to frame theory and to \cite{ck12}  for an overview of the current research in the field.

A frame $\Phi = \{\varphi_i\}_{i=1}^M$ for $\R^N$ is called ({\em strictly}) \emph{scalable} if there exist nonnegative (positive, respectively) scalars $\{s_i\}_{i=1}^M$ such that $\{s_i\varphi_i\}_{i=1}^M$ is a tight frame for $\R^N$. The set of (strictly) scalable frames is denoted by $\sca(M,N)$ ($\sca_{+}(M,N)$, respectively).
This definition obviously allows one to restrict the study to the class of unit norm frames
$$
\Fc_u(M,N) := \left\{\{\varphi_i\}_{i=1}^M\in\Fc(M,N) : \|\varphi_i\|_2=1\text{ for }i=1,\ldots,M\right\},
$$
and further to substitute tight frame by Parseval frame in the above definition. Therefore  a frame $\Phi=\{\varphi_i\}_{i=1}^M$
is scalable if and only if there exist non-negative scalars $\{c_i\}_{i=1}^M$ such that
\begin{equation}\label{equ:scalar}
\Phi C \Phi^T=\sum_{i=1}^Mc_i\varphi_1\varphi_i^T=I, \quad\text{where } C=\text{diag}(c_i).
\end{equation}

In \cite{kopt13},
characterizations of $\sca(M,N)$ and $\sca_{+}(M,N)$,
both of functional analytic and geometric type were derived in the infinite as well as finite dimensional
setting. As a sequel, using topological considerations, it was proved in \cite{kop13c} that the set
of scalable frames, $\sca(M,N)$, is  a `small'  subset of $\fr(M,N)$ when $M$ is relatively small and a
yet different characterization using a particular mapping was derived. This last mapping is closely
related to the so-called diagram vectors/mapping in \cite{cklmns12}. In \cite{cc12}, arbitrary scalars
in $\C$ were allowed, and it was shown that in this case most frames are either not scalable or scalable in
a unique way and, if uniqueness is not given, the set of all possible sequences of scalars is studied.

\subsection{How Scalable is a Frame?}
However, in the applied world, scalability seems too idealistic, in particular, if our frame at hand is
not scalable. This calls for a measure of `closeness to being scalable'. It is though not obvious how
to define such a measure, and one can easily justify different points of view of what `closeness' shall
mean. Let us discuss the following three viewpoints:
\begin{itemize}
\item {\em Distance to $\sca(M,N)$.} Maybe the most straightforward approach is to measure the
distance of a frame $\Phi\in\fr_u(M,N)$ to the set of scalable frames:
\[
d_{\Phi} := \inf_{\Psi\in\sca(M,N)}\|\Phi-\Psi\|_F.
\]
This notion seems natural if we anticipate efficient algorithmic approaches for computing the closest
scalable frame by projections onto $\sca(M,N)$.

\item {\em Conical Viewpoint.} Inspired by \eqref{equ:scalar}, we observe that $\Phi$ is scalable if and only if the identity operator $I$ lies in the cone generated by the vectors $\varphi_i\varphi_i^T$, $i=1,\ldots,M$, which is $\{\sum_{i=1}^M c_i\varphi_i\varphi_i^T:c_i\geq0\}$. Thus the distance of $I$ to this cone seems to be another suitable measure for scalability of $\Phi\in\fr_u(M,N)$, and we define
\[
D_{\Phi} := \min_{C\ge 0\text{ diagonal}}\left\|\Phi C\Phi^T - I\right\|_F,
\]
where $\|\cdot\|_F$ denotes the Frobenius norm. Note that the minimum is attained because this polyhedral cone is closed. This conical viewpoint leads to a computationally efficient algorithm, since we can recast the problem as a quadratic program (see Section \ref{sec:complexity}).

\item {\em Ellipsoidal Viewpoint.} Finally, one can consider the ellipsoid of minimal volume (also known as the
L\"{o}wner ellipsoid) circumscribing the convex hull of the symmetrized frame of $\Phi\in\fr_u(M,N)$:
\[
\Phi_{\rm Sym} := \{\varphi_i\}_{i=1}^M\cup\{-\varphi_i\}_{i=1}^M,
\]
which in the sequel we denote by $E_{\Phi}$ and refer to as the \emph{minimal ellipsoid  of $\Phi$}. Its `normalized' volume is defined by
\[
V_{\Phi} := \frac{\Vol(E_{\Phi})}{\omega_N},
\]
where $\omega_N$ is the volume of the unit ball in $\R^N$. By definition, we have $V_\Phi \le 1$, and  we will later show (Theorem \ref{pro:iff}) that $V_{\Phi}=1$ holds if and only if the frame $\Phi$ is scalable. Hence, yet another conceivably useful measure for
scalability is the closeness of $V_{\Phi}$ to $1$.  This ellipsoidal viewpoint establishes a novel link to convex geometry. Moreover, it will turn out that this measure is of
particular use when estimating the probability of a random frame being scalable.
\end{itemize}
Each notion seems justified from a different perspective, and hence there is no `general truth' for
what the best measure is.

\subsection{Our Contributions}
Our contributions are three-fold: First, we introduce the scalability measures $d_\Phi$, $D_{\Phi}$, and $V_{\Phi}$,
derive estimates for their values, and study their relations in Theorems~\ref{thm_VD} and \ref{thm:comp}. Second, with
Theorems \ref{pro:iff} and \ref{pro:ns} we provide new necessary and sufficient conditions for scalability based on the ellipsoidal viewpoint.
And, third, we estimate the probability of a frame being scalable when each frame vector is drawn independently
and uniformly from the unit sphere (see Theorem \ref{thm:pro}).

\subsection{Expected Impact}
We anticipate our results to have the following impacts:

\begin{itemize}
\item \emph{Constructions of Scalable Frames:} One construction procedure which is a byproduct of our analysis is to
consider random frames with the probability of scalability being explicitly given. However, certainly, there
is the need for more sophisticated efficient algorithmic approaches. But with the measures provided in our
work, the groundwork is laid for analyzing their accuracy.
\item \emph{Extensions of Scalability:} One might also imagine other methodological approaches to modify a frame to
become tight. If sparse approximation properties is what one seeks, another possibility is to be allowed to
take linear combinations of `few' frame vectors in the spirit of the `double sparsity' approach in \cite{RZE10}.
The introduced three quality measures provide an understanding of scalability which we hope might allow an
attack on analyzing those approaches as well.
\item \emph{$\epsilon$-Scalability:} One key question even more important to applications than scalability is that of what is typically loosely coined $\epsilon$-scalability, meaning a frame which is scalable `up to an $\epsilon$',
but which was not precisely defined before. The scalability measures now immediately provide even three definitions of $\epsilon$-scalability in a very natural way, opening three doors to approaching this problem.
\item \emph{Convex Geometry:} The ellipsoidal viewpoint of scalability provides a very interesting link between frame theory and convex geometry. Theorem \ref{pro:ns} and Theorem \ref{thm:pro} are results about frames using convex geometry tools; Theorem \ref{thm:cha} is a result about minimal ellipsoids exploiting frame theory.  We strongly expect the link  established in this paper to bear further fruits in frame theory, in particular the approach of regarding frames from a convex geometric viewpoint by analyzing the convex hull of a (symmetrized) frame.
\end{itemize}

\subsection{Outline}
This paper is organized as follows. In Section, \ref{sec:measures}, the three measures of
closeness of a given frame to be scalable are introduced in three respective subsections
and some basic properties are studied. This is followed by a comparison of the measures
both theoretically and numerically (Section \ref{sec:comp}). Finally, in Section \ref{sec:prob}
we exploit those results to analyze the probability of a frame to be scalable. Interestingly,
along the way we derive necessary and sufficient (deterministic) conditions for a frame
to be scalable (see Subsection \ref{subsec:Necsuffconditions}).

\vspace{.5cm}
\section{Properties of the measures of Scalability}\label{sec:measures}
In this section, we explore some basic properties of the three measures of scalibility which we introduced in the previous section. As mentioned before, we consider only unit norm frames.

\subsection{Distance to the Set of Scalable Frames}
Recall that the measure $d_\Phi$ was defined as the distance of $\Phi$ to the set of scalable frames:
\begin{equation}\label{eq:dist}
d_{\Phi} = \inf_{\Psi\in\sca(M,N)}\|\Phi-\Psi\|_F.
\end{equation}
Since the set $\sca(M,N)$ is not closed (choose $\Phi\in\sca(M,N)$, then $(\frac 1 n\Phi)_{n\in\N}$ is a sequence in $\sca(M,N)$ which converges to the zero matrix), it is not clear whether the infimum in \eqref{eq:dist} is attained. The following proposition, however, shows that this is the case if $d_\Phi < 1$.

\begin{proposition}\label{p:closed}
If $\Phi\in\Fc_u(M,N)$ such that $d_\Phi<1$ then there exists $\hat{\Phi}\in\sca(M,N)$ such that $\|\Phi-\hat{\Phi}\|_F = d_\Phi$.
\end{proposition}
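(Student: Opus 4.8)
The plan is to take a minimizing sequence and show that it cannot degenerate precisely because $d_\Phi<1$. Let $(\Psi_n)\subset\sca(M,N)$ satisfy $\|\Phi-\Psi_n\|_F\to d_\Phi$. First I would note that this sequence is bounded: since $\Phi\in\Fc_u(M,N)$ we have $\|\Phi\|_F=\sqrt M$, so $\|\Psi_n\|_F\le\sqrt M+\|\Phi-\Psi_n\|_F$ stays bounded. By compactness of bounded sets in $\R^{N\times M}$ I may pass to a subsequence with $\Psi_n\to\Psi$, and continuity of the norm yields $\|\Phi-\Psi\|_F=d_\Phi$. It then remains only to verify that the limit $\Psi$ is itself scalable; this is the real content, since $\sca(M,N)$ fails to be closed.

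The decisive consequence of the hypothesis $d_\Phi<1$ is that no column of $\Psi$ collapses to zero. Writing $\varphi_i$ and $\psi_i$ for the columns of $\Phi$ and $\Psi$, the inequality $\|\varphi_i-\psi_i\|\le\|\Phi-\Psi\|_F=d_\Phi$ together with $\|\varphi_i\|=1$ forces $\|\psi_i\|\ge 1-d_\Phi>0$ for every $i$. This is exactly the step that exploits the threshold $1$ and rules out the degeneration exhibited just before the statement, where rescaled copies of a frame shrink to the zero matrix.

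Next I would transport the defining scalars to the limit. By \eqref{equ:scalar}, scalability of $\Psi_n$ provides nonnegative scalars $c_i^{(n)}$ with $\sum_{i=1}^M c_i^{(n)}\psi_i^{(n)}(\psi_i^{(n)})^T=I$, where $\psi_i^{(n)}$ are the columns of $\Psi_n$. Taking traces gives $N=\sum_i c_i^{(n)}\|\psi_i^{(n)}\|^2$, and since $\psi_i^{(n)}\to\psi_i$ with $\|\psi_i\|\ge 1-d_\Phi>0$, the quantities $\|\psi_i^{(n)}\|^2$ are bounded below by a fixed positive constant for all large $n$. Hence $\sum_i c_i^{(n)}$ is bounded, so after a further subsequence $c_i^{(n)}\to c_i\ge 0$. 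Passing to the limit in the rank-one decomposition yields $\sum_i c_i\,\psi_i\psi_i^T=I$, which simultaneously shows that $\Psi$ spans $\R^N$ (hence is a frame) and that it is scalable. Setting $\hat\Phi:=\Psi$ then completes the argument.

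The main obstacle is controlling the scalars $c_i^{(n)}$: a priori they could blow up as $n\to\infty$, which would allow the limit to fall outside $\sca(M,N)$, and it is precisely here that the non-degeneracy of the limit columns, guaranteed by $d_\Phi<1$, is indispensable. Once their boundedness is secured via the trace identity, the remainder is routine compactness and continuity.
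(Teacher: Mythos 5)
Your proof is correct and follows essentially the same route as the paper's: a bounded minimizing sequence, compactness, the key observation that $d_\Phi<1$ keeps the columns bounded away from zero, the trace identity $N=\sum_i c_i^{(n)}\|\psi_i^{(n)}\|^2$ to bound the scalars, and a final passage to the limit in $\sum_i c_i\psi_i\psi_i^T=I$. The only cosmetic difference is that you derive the column lower bound for the limit frame and transfer it back to the sequence, whereas the paper bounds the columns of the sequence members directly; the substance is identical.
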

\begin{proof}
Let $\varepsilon = \frac{1-d_\Phi}{2}$, and $\{\Phi_n\}_{n\in\N}\subset\sca(M,N)$ be a sequence of scalable frames such that $\|\Phi-\Phi_n\|_F \leq d_\Phi+\varepsilon/n $. The sequence $\{\Phi_n\}_{n\in\N}$ is bounded as
$$
\|\Phi_n\|_F\leq\|\Phi\|_F+\|\Phi-\Phi_n\|_F\leq\sqrt{M}+d_{\Phi}+\frac{1-d_\Phi}{2},
$$
so without loss of generality, we assume that $\{\Phi_n\}_{n\in\N}$ converges to some $\hat\Phi\in\R^{N\times M}$. It remains to prove that $\hat{\Phi}$ is scalable. For this, denote by $\varphi_{i,n}$ the $i$-th column of $\Phi_n$. Then
$$
\|\varphi_{i,n}\|_2\ge\|\varphi_i\|_2-\|\varphi_i-\varphi_{i,n}\|_2\geq 1-d_\Phi-\varepsilon = \varepsilon
$$
for all $n\ge 0$ and all $i\in\{1,\ldots,M\}$.  Let $C_n = \diag(c_{1,n},\ldots,c_{M,n})$ be a non-negative diagonal matrix such that $\Phi_nC_n\Phi_n^T = I$. Now, for each $j\in\{1,\ldots,M\}$ and each $n\ge 0$ we have
$$
N = \tr(I) = \tr\left(\sum_{i=1}^Mc_{i,n}\varphi_{i,n}\varphi_{i,n}^T\right) = \sum_{i=1}^Mc_{i,n}\|\varphi_{i,n}\|^2\,\ge\,\varepsilon^2 c_{j,n}.
$$
Therefore, each sequence $(c_{i,n})_{n\in\N}$ is bounded. Thus, we find an index sequence $(n_k)_{k\in\N}$ such that
$$
c_i := \lim_{k\to\infty}\,c_{i,n_k}
$$
exists for each $i\in\{1,\ldots,M\}$. Now, it is easy to see that $\Phi_{n_k}C_{n_k}\Phi_{n_k}^T$ converges to $\hat{\Phi} C\hat{\Phi}^T$ as $k\to\infty$, where $C := \diag(c_1,\ldots,c_m)$. Hence, $\hat{\Phi} C\hat{\Phi}^T = I$, and $\hat{\Phi}$ is a scalable frame.
\end{proof}

\begin{remark}\label{rem:non-zero}
The proof of Proposition \ref{p:closed} also yields that the frame vectors of any minimizer of \eqref{eq:dist} are non-zero if $d_\Phi < 1$.
\end{remark}

\begin{lemma}\label{d}
Assume that $d_\Phi < 1$, and let $\hat\Phi = \{\hat\vphi_i\}_{i=1}^M$ be a minimizer of \eqref{eq:dist}. Then for every $i = 1,\ldots,M$,
\begin{enumerate}
\item[(i)]
$\label{equ:hatphi2}
\<\vphi_i,\hat\vphi_i\> = \|\hat\vphi_i\|_2^2.$
\item[(ii)]  $\|\hat\vphi_i\|_2\leq 1$, and equality holds if and only if $\hat\varphi_i=\varphi_i$.
\item[(iii)] $\|\hat\Phi\|_F^2 = M - d_\Phi^2$.
\end{enumerate}
\end{lemma}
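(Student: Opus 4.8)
The plan is to establish (i) from a first-order optimality condition, and then obtain (ii) and (iii) as essentially algebraic consequences. The structural fact I would exploit is that $\sca(M,N)$ is invariant under positive rescaling of individual frame vectors: if $\{s_i\hat\vphi_i\}_{i=1}^M$ witnesses scalability via $\sum_i c_i\hat\vphi_i\hat\vphi_i^T = I$, then replacing $\hat\vphi_i$ by $t\hat\vphi_i$ with $t>0$ keeps the frame scalable, since rescaling the corresponding $c_i$ to $c_i/t^2$ restores the identity. By Remark \ref{rem:non-zero} each $\hat\vphi_i$ is nonzero, so this rescaling is nondegenerate, and $\hat\Phi$ itself exists as a minimizer by Proposition \ref{p:closed}.

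For (i), I would fix an index $i$ and consider the one-parameter family obtained from $\hat\Phi$ by replacing $\hat\vphi_i$ with $t\hat\vphi_i$. For $t$ in a neighborhood of $1$ these are all scalable frames, hence admissible competitors in \eqref{eq:dist}. Since $\hat\Phi$ is a minimizer, the function $f(t) = \|\vphi_i - t\hat\vphi_i\|_2^2 + \sum_{j\neq i}\|\vphi_j-\hat\vphi_j\|_2^2$ attains a local minimum at the interior point $t=1$. Expanding $\|\vphi_i - t\hat\vphi_i\|_2^2 = \|\vphi_i\|_2^2 - 2t\<\vphi_i,\hat\vphi_i\> + t^2\|\hat\vphi_i\|_2^2$ and imposing $f'(1)=0$ yields $\<\vphi_i,\hat\vphi_i\> = \|\hat\vphi_i\|_2^2$, which is exactly (i).

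For (ii), I would combine (i) with Cauchy--Schwarz and $\|\vphi_i\|_2=1$: since $\|\hat\vphi_i\|_2^2 = \<\vphi_i,\hat\vphi_i\> \le \|\vphi_i\|_2\|\hat\vphi_i\|_2 = \|\hat\vphi_i\|_2$ and $\hat\vphi_i\neq 0$, dividing gives $\|\hat\vphi_i\|_2 \le 1$. Equality forces equality in Cauchy--Schwarz, so $\hat\vphi_i$ is a nonnegative multiple of $\vphi_i$; comparing norms then gives $\hat\vphi_i=\vphi_i$, and the converse implication is immediate.

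Finally, (iii) is a direct computation: expanding $d_\Phi^2 = \|\Phi-\hat\Phi\|_F^2 = \sum_{i=1}^M\bigl(\|\vphi_i\|_2^2 - 2\<\vphi_i,\hat\vphi_i\> + \|\hat\vphi_i\|_2^2\bigr)$ and substituting $\|\vphi_i\|_2^2=1$ together with (i) collapses each summand to $1 - \|\hat\vphi_i\|_2^2$, so that $d_\Phi^2 = M - \sum_i\|\hat\vphi_i\|_2^2 = M - \|\hat\Phi\|_F^2$, which rearranges to (iii). The only real subtlety, and the step I expect to be the main obstacle, is justifying the variational argument in (i): one must verify that single-vector rescalings genuinely remain in $\sca(M,N)$ and that $t=1$ is an interior point of the admissible range (both guaranteed by $\hat\vphi_i\neq 0$). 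Everything after that is routine.
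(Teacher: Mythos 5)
Your proposal is correct and follows essentially the same route as the paper: both exploit that rescaling a single vector of a scalable frame preserves scalability, compare the minimizer $\hat\Phi$ against these one-parameter competitors to derive (i), and then obtain (ii) via Cauchy--Schwarz (with the sign information from (i) settling the equality case) and (iii) by direct expansion of $\|\Phi-\hat\Phi\|_F^2$. The only cosmetic difference is that you extract (i) from the first-order condition $f'(1)=0$ at the interior minimum, whereas the paper substitutes the exact minimizing scalar $\alpha=\<\vphi_j,\hat\vphi_j\>/\|\hat\vphi_j\|_2^2$ into the resulting inequality and completes the square.
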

\begin{proof}
(i). Fix $j\in\{1,\ldots,M\}$ and $\alpha\in\R\setminus\{0\}$ be arbitrary. Define the frame $\Psi = \{\psi_i\}_{i=1}^M$ as
$$
\psi_i =
\begin{cases}
\hat\vphi_i&\text{if }i\neq j\\
\alpha\hat\vphi_j&\text{if }i=j
\end{cases},
$$
which is scalable. Hence, we have
\begin{align*}
\|\Phi - \hat\Phi\|_F^2
&\le \|\Phi - \Psi\|_F^2 = \sum_{i=1}^M\|\vphi_i - \psi_i\|_2^2 = \sum_{i\neq j}^M\|\vphi_i - \hat\vphi_i\|_2^2 + \|\vphi_j - \alpha\hat\vphi_j\|_2^2\\
&= \sum_{i=1}^M\|\vphi_i - \hat\vphi_i\|_2^2 + \left(\|\vphi_j - \alpha\hat\vphi_j\|_2^2 - \|\vphi_j - \hat\vphi_j\|_2^2\right)\\
&= \|\Phi - \hat\Phi\|_F^2 + \left(\|\vphi_j - \alpha\hat\vphi_j\|_2^2 - \|\vphi_j - \hat\vphi_j\|_2^2\right).
\end{align*}
This implies
$$
\|\vphi_j - \alpha\hat\vphi_j\|_2^2\,\ge\,\|\vphi_j - \hat\vphi_j\|_2^2
$$
or, equivalently,
\begin{equation}\label{strange_alpha}
-2\alpha\left\<\vphi_j,\hat\vphi_j\right\> + \alpha^2\|\hat\vphi_j\|_2^2\,\ge\,-2\left\<\vphi_j,\hat\vphi_j\right\> + \|\hat\vphi_j\|_2^2
\end{equation}
for all $\alpha\in\R\setminus\{0\}$ and all $j\in\{1,\ldots,M\}$. Putting $\alpha = \tfrac{\<\vphi_j,\hat\vphi_j\>}{\|\hat\vphi_j\|_2^2}$ in \eqref{strange_alpha} gives
$$
-\frac{\<\vphi_j,\hat\vphi_j\>^2}{\|\hat\vphi_j\|_2^2}\,\ge\,-2\<\vphi_j,\hat\vphi_j\> + \|\hat\vphi_j\|_2^2,
$$
which is equivalent to
$$
0\,\ge\,\left(\frac{\<\vphi_j,\hat\vphi_j\>}{\|\hat\vphi_j\|_2} - \|\hat\vphi_j\|_2\right)^2,
$$
which leads to the conclusion.

(ii). By (i) we have
$$
\|\hat\vphi_j\|_2^2 = \<\vphi_j,\hat\vphi_j\>\,\le\,\|\vphi_j\|_2\|\hat\vphi_j\|_2 = \|\hat\vphi_j\|_2
$$
This proves $\|\hat\vphi_j\|_2\le 1$ and that $\|\hat\vphi_j\|_2 = 1$ holds if and only if $\hat\vphi_j = \lambda\vphi_j$ for some $\lambda\in\R$. In the latter case, as both vectors are normalized, we have $\lambda = \pm 1$. But $\hat\vphi_j = -\vphi_j$ is impossible due to (i). Thus, $\vphi_j = \hat\vphi_j$ follows.

(iii). By (i),
\begin{align*}
M - d_\Phi^2
&= M - \|\Phi - \hat\Phi\|_F^2 = M - \sum_{i=1}^M\|\vphi_i - \hat\vphi_i\|_2^2\\
&= M - \sum_{i=1}^M\left(1 - 2\<\vphi_i,\hat\vphi_i\> + \|\hat\vphi_i\|_2^2\right) = \sum_{i=1}^M\|\hat\vphi_i\|_2^2 = \|\hat\Phi\|_F^2.
\end{align*}
This proves the claim.
\end{proof}

Since we do not yet have a complete understanding of the set $\sca(M,N)$, we do not have an algorithm for calculating the infimum $d_\Phi$ in \eqref{eq:dist}. For this reason, we introduce two other measures of scalability in the remainder of this section which are more accessible in practice.  We will relate these measures to each other and to $d_\Phi$ in Section \ref{sec:comp}.

\subsection{Distance of the Identity to a Cone}

As mentioned in the introduction, the measure $D_\Phi$ for the scalability of $\Phi\in\fr_u(M,N)$ is the distance of the identity operator on $\R^N$ to the cone generated by $\{\vphi_i\vphi_i^T\}$. Let us recall its definition:
\begin{equation}\label{equ:proj}
D_{\Phi} := \min_{c_i\geq0}\left\|\sum_{i=1}^M c_i\varphi_i\varphi_i^T-I\right\|_F = \min_{C\ge 0\text{ diagonal}}\left\|\Phi C\Phi^T - I\right\|_F.
\end{equation}
For the following, it is convenient to represent the function to be minimized in \eqref{equ:proj} in another form:
\begin{align}\notag
\left\|\sum_{i=1}^Mc_i\vphi_i\vphi_i^T - I\right\|_F^2
&= \tr\left(\sum_{i,j=1}^Mc_ic_j\vphi_i\vphi_i^T\vphi_j\vphi_j^T - 2\sum_{i=1}^Mc_i\vphi_i\vphi_i^T + I\right)\\
\label{equ:quadratic}&= \sum_{i,j=1}^Mc_ic_j|\<\vphi_i,\vphi_j\>|^2 - 2\sum_{i=1}^Mc_i + N.
\end{align}
If we now put ${\mathbf 1} := (1,\ldots,1)^T\in\R^M$, $f_{ij} := |\<\vphi_i,\vphi_j\>|^2$, $i,j=1,\ldots,M$, $F := (f_{ij})_{i,j=1}^M$, and $c := (c_1,\ldots,c_m)^T$, we obtain
\begin{equation}\label{g}
g(c):=\left\|\sum_{i=1}^Mc_i\vphi_i\vphi_i^T - I\right\|_F^2 = c^TFc - 2\cdot \one^Tc + N.
\end{equation}
First of all, we can associate $D_{\Phi}$ with the frame potential (see, e.g., \cite{BF}):
$$
\FP(\Phi) := \sum_{i,j=1}^M|\<\vphi_i,\vphi_j\>|^2.
$$
By plugging in $\alpha{\mathbf 1}$ into $g$ with $\alpha > 0$:
$$
g(\alpha{\mathbf 1}) =  \alpha^2\FP(\Phi) - 2M\alpha + N.
$$
So,
$$
D_\Phi^2\,\le\,\min_{\alpha\ge 0}g(\alpha\one) =  N - \frac{M^2}{\FP(\Phi)}.
$$
We summarize the above discussion in a proposition.

\begin{proposition}
For $\Phi\in\Fc_u(M,N)$ we have
\begin{equation}\label{upbound}
D_\Phi^2\,\le\,N - \frac{M^2}{\FP(\Phi)}.
\end{equation}
\end{proposition}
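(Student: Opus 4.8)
The plan is to exploit the explicit quadratic representation of the objective established in \eqref{g}, namely $g(c) = c^T F c - 2\,\one^T c + N$, and to bound the constrained minimum $D_\Phi^2 = \min_{c\ge 0} g(c)$ from above by testing a single well-chosen feasible direction. First I would restrict attention to vectors of the form $c = \alpha\one$ with $\alpha \ge 0$. Since every such vector has nonnegative entries, it lies in the feasible cone, so $g(\alpha\one)$ is automatically an upper bound for $D_\Phi^2$ for each $\alpha \ge 0$; this reduces the $M$-dimensional minimization to a one-parameter problem.

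Next I would evaluate $g$ along this ray. Using $\one^T\one = M$ together with the identity $\one^T F \one = \sum_{i,j=1}^M |\<\vphi_i,\vphi_j\>|^2 = \FP(\Phi)$, the objective collapses to the scalar quadratic $g(\alpha\one) = \alpha^2\,\FP(\Phi) - 2M\alpha + N$. Hence $D_\Phi^2 \le \min_{\alpha\ge 0} g(\alpha\one)$, and it remains only to minimize this quadratic in a single variable.

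The concluding step is routine. Because $\FP(\Phi) > 0$ for any frame, the quadratic in $\alpha$ is strictly convex, and its unconstrained minimizer $\alpha^\ast = M/\FP(\Phi)$ is strictly positive, hence feasible for the constraint $\alpha \ge 0$. Substituting $\alpha^\ast$ gives $g(\alpha^\ast\one) = N - M^2/\FP(\Phi)$, which is exactly the asserted bound \eqref{upbound}.

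I do not expect any genuine obstacle here: the only point requiring a moment's care is checking that the unconstrained minimizer $\alpha^\ast$ of the restricted quadratic is nonnegative, so that the constraint $c \ge 0$ is inactive along the chosen ray and the minimum is attained inside the feasible set; this holds automatically since both $M$ and $\FP(\Phi)$ are positive. It should be emphasized that the result is merely an inequality precisely because the minimization has been confined to the single ray $\{\alpha\one : \alpha \ge 0\}$ rather than carried out over the full cone $\{c : c \ge 0\}$.
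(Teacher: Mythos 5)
Your proof is correct and follows exactly the paper's own argument: the paper likewise restricts the minimization in \eqref{g} to the ray $c=\alpha\one$, uses $\one^TF\one=\FP(\Phi)$ to reduce to the scalar quadratic $\alpha^2\FP(\Phi)-2M\alpha+N$, and minimizes over $\alpha\ge 0$ to obtain $N-M^2/\FP(\Phi)$. Nothing further is needed.
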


\begin{remark}
Since $\FP(\Phi)<M^2$, the inequality \eqref{upbound} implies that $D_{\Phi}< \sqrt{N-1}$. It is worth noting that this upper bound is sharp in the sense that for each $\veps > 0$ there exists $\Phi\in\Fc_u(M,N)$ such that $D_\Phi > \sqrt{N-1} - \veps$. This can be proved by essentially choosing the frame vectors of $\Phi$ very close to each other.
\end{remark}

The following proposition can be thought of as an analog to Lemma \ref{d} (iii).
\begin{proposition}\label{trace}
Let the non-negative diagonal matrix $\hat C = \diag(\hat c_1,\ldots,\hat c_M)\in\R^{M\times M}$ be a minimizer of \eqref{equ:proj}. Then
\begin{equation}\label{equ:ci}
\tr(\Phi \hat C\Phi^T)=\sum_{i=1}^M \hat c_i=N-D_{\Phi}^2.
\end{equation}
\end{proposition}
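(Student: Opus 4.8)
The plan is to exploit the quadratic representation $g(c) = c^TFc - 2\cdot\one^Tc + N$ from \eqref{g} together with the scale-invariance of the feasible set $\{c\ge 0\}$, which is a cone. The guiding observation is that if $\hat c = (\hat c_1,\ldots,\hat c_M)^T$ minimizes $g$ over the cone, then the entire ray $t\mapsto t\hat c$, $t\ge 0$, remains feasible, so the one-variable restriction $\phi(t) := g(t\hat c)$ must attain its minimum over $t\ge 0$ at $t = 1$. Extracting the stationarity of $\phi$ at $t=1$ will produce an identity relating $\hat c^TF\hat c$ and $\one^T\hat c$, which plugged back into $g(\hat c) = D_\Phi^2$ gives the claim.

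First I would record that $\hat c\neq 0$. Indeed, by the upper bound \eqref{upbound} and the fact $\FP(\Phi) < M^2$ for a genuine unit-norm frame, one has $D_\Phi^2 \le N - M^2/\FP(\Phi) < N = g(0)$, so the zero vector cannot be a minimizer. This matters because it places $t = 1$ strictly inside the feasible interval $[0,\infty)$ for the ray, making the derivative condition $\phi'(1) = 0$ available rather than a mere one-sided inequality.

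Next I would compute $\phi(t) = t^2\,\hat c^TF\hat c - 2t\cdot\one^T\hat c + N$, an upward parabola in $t$: here $F$ is positive semidefinite (it is the entrywise square of the Gram matrix), and since $\hat c\ge 0$, $\hat c\neq 0$, and $f_{ii} = 1$, we get $\hat c^TF\hat c \ge \sum_i \hat c_i^2 > 0$. As $t=1$ minimizes $\phi$ on $[0,\infty)$ and lies in its interior, differentiating gives $\phi'(1) = 2\,\hat c^TF\hat c - 2\,\one^T\hat c = 0$, i.e. the key identity $\hat c^TF\hat c = \one^T\hat c$.

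Finally I would substitute this identity into the optimal value,
$$
D_\Phi^2 = g(\hat c) = \hat c^TF\hat c - 2\,\one^T\hat c + N = \one^T\hat c - 2\,\one^T\hat c + N = N - \one^T\hat c,
$$
and rearrange. Since $\one^T\hat c = \sum_{i=1}^M \hat c_i = \tr(\Phi\hat C\Phi^T)$, this is precisely \eqref{equ:ci}. The only genuinely delicate point, and the one I expect to be the main obstacle, is the justification that $t=1$ is interior to the feasible ray so that $\phi'(1)$ truly vanishes; everything else is a short computation, and that subtlety is exactly what the observation $\hat c\neq 0$ (forced by $D_\Phi^2 < N$) resolves.
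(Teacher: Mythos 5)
Your proof is correct and takes essentially the same route as the paper's: both restrict $g$ to the ray $t\mapsto g(t\hat c)$, use first-order stationarity at $t=1$ to obtain the identity $\hat c^TF\hat c = \one^T\hat c$, and substitute it back into $D_\Phi^2 = g(\hat c)$. Your preliminary step showing $\hat c\neq 0$ is superfluous rather than the crux: $t=1$ is interior to $[0,\infty)$ no matter what $\hat c$ is, and even in the degenerate case $\hat c=0$ the identity and the conclusion hold trivially.
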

\begin{proof}
The first equality in \eqref{equ:ci} is due to the fact that the $\vphi_i$'s are normalized. Define
$$
f(\alpha) :=  g(\alpha \hat c) = \alpha^2 \hat c^TF\hat c - 2\alpha\one^T\hat c + N.
$$
for $\alpha>0$. The function $f(\alpha)$ has a local minimum at $\alpha=1$, therefore
$$
\frac{df}{d\alpha}\Big|_{\alpha=1}=0\quad\Longrightarrow\quad \hat c^TF\hat c = \one^Tc.
$$
So,
\[
D_{\Phi}^2 = f(1) =\hat  c^TF\hat c - 2\cdot\one^T\hat c + N = N - \one^T\hat c = N - \sum_{i=1}^M\hat c_i,
\]
which proves the proposition.
\end{proof}

\subsection{Volume of the Smallest Ellipsoid Enclosing the Symmetrized Frame}\label{sec:v}
In the following, we shall examine the properties of the measure $V_\Phi$. We will have to recall a few facts from convex geometry, especially results dealing with the ellipsoid of a convex polytope first. An $N$-dimensional ellipsoid centered at $c$ is defined as
$$
E(X,c) := c+X^{-1/2}(B) = \{v: \ip{ X(v-c)}{ (v-c)}\leq 1\},
$$
where $X$ is an $N\times N$ positive definite matrix, and $B$ is the unit ball in $\R^N$. It is easy to see that
\begin{equation}\label{equ:vol}
\Vol(E(X,c)) = \det(X^{-1/2})\omega_N.
\end{equation}
Here, as already mentioned in the introduction, $\omega_N$ denotes the volume of the unit ball in $\R^N$.

A {\em convex body} in $\R^N$ is a nonempty compact convex subset of $\R^N$. It is well-known that for any convex body $K$ in $\R^N$ with nonempty interior there is a unique ellipsoid  of minimal volume containing $K$ and a unique ellipsoid  of maximal volume contained in $K$; see, e.g., \cite[Chapter 3]{ntj89}. We refer to \cite{kb97,og10,ntj89} for more on these extremal ellipsoids.

In what follows, we only consider the ellipsoid of minimal volume  that encloses a given convex body, and this ellipsoid will be called the \emph{minimal ellipsoid} of that convex body. The following theorem is a generalization of John's ellipsoid theorem \cite{fj48}, which will be referred  as John's theorem in this paper.

\begin{theorem}\label{thm:fritz}\cite[Theorem 12.9]{og10}
Let $K\subset\R^N$ be a convex body
and let $X$ be an $N\times N$ positive definite matrix. Then the following are equivalent:
\begin{enumerate}
\item[{\rm (i)}]  $E(X,c)$ is the minimal ellipsoid of $K$.
\item[{\rm (ii)}] $K\subset E(X,c)$, and there exist positive multipliers $\{\lambda_i\}_{i=1}^{k}$, and contact points $\{u_i\}_{i=1}^k$ in $K$ such that
\begin{equation}\label{equ:activej}
X^{-1} = \sum_{i=1}^k\lambda_i(u_i-c)(u_i-c)^T,
\end{equation}
\begin{equation}\label{equ:nonsym}
0=\sum_{i=1}^k\lambda_i(u_i-c),
\end{equation}
\begin{equation}\label{equ:last}
u_i\in \partial K\cap \partial E(X,c),\quad i=1,\dots, k.
\end{equation}
\end{enumerate}
\end{theorem}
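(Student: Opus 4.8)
The plan is to reduce the statement to the classical John characterization of the minimal ellipsoid of the \emph{unit ball} and then to prove that characterization directly. Consider the affine map $T\colon\R^N\to\R^N$, $T(v)=X^{1/2}(v-c)$, which by the definition $E(X,c)=c+X^{-1/2}(B)$ sends $E(X,c)$ onto the unit ball $B$. Set $K':=T(K)$, again a convex body with nonempty interior. Since $T$ maps ellipsoids to ellipsoids, preserves inclusions, and scales every volume by the constant factor $(\det X)^{1/2}$, the ellipsoid $E(X,c)$ is the minimal ellipsoid of $K$ if and only if $B$ is the minimal ellipsoid of $K'$. Writing $w_i:=T(u_i)=X^{1/2}(u_i-c)$, so that $u_i-c=X^{-1/2}w_i$, one checks that \eqref{equ:activej}--\eqref{equ:last} are equivalent to their unit-ball counterparts $\sum_i\lambda_i w_iw_i^T=I$, $\sum_i\lambda_i w_i=0$, and $w_i\in\partial K'\cap S^{N-1}$: substituting into \eqref{equ:activej} and conjugating by $X^{1/2}$ gives $\sum_i\lambda_i w_iw_i^T=I$, \eqref{equ:nonsym} becomes $X^{-1/2}\sum_i\lambda_i w_i=0$, and since $T$ is a homeomorphism carrying $\partial K$ to $\partial K'$ and $\partial E(X,c)$ to $S^{N-1}$, the contact condition \eqref{equ:last} transforms accordingly. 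Thus it suffices to treat the case $X=I$, $c=0$.

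For the direction (i)$\Rightarrow$(ii) in the unit-ball case, I would argue variationally. Let the contact set be $\mathcal{C}:=K'\cap S^{N-1}$, and show that the pair $(I,0)$ lies in the convex cone generated by $\{(ww^T,w):w\in\mathcal{C}\}$ in the space $\Sym(N)\times\R^N$. If this failed, a separating-hyperplane argument would produce a symmetric matrix $H$ and a vector $h$ with $\langle ww^T,H\rangle+\langle w,h\rangle<\langle I,H\rangle$ for every $w\in\mathcal{C}$; from $H$ and $h$ one then builds a one-parameter family of ellipsoids with matrix $I+tH$ and an appropriately chosen center that still contain $K'$ for small $t>0$ but have strictly smaller volume (to first order the log-volume $-\tfrac12\log\det(I+tH)$ decreases while every contact point remains inside), contradicting the minimality of $B$. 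By Carath\'eodory's theorem the resulting representation can be taken finite, yielding the multipliers $\lambda_i>0$ and contact points $u_i$.

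For the converse (ii)$\Rightarrow$(i), suppose $K'\subset B$ and that the unit-ball relations hold. Taking the trace of $\sum_i\lambda_i w_iw_i^T=I$ and using $\|w_i\|_2=1$ gives $\sum_i\lambda_i=N$. Let $E(Y,c')$ be any ellipsoid containing $K'$. Since each $w_i\in K'\subset E(Y,c')$ we have $\langle Y(w_i-c'),w_i-c'\rangle\le 1$; multiplying by $\lambda_i$, summing, and using the two relations together with $\sum_i\lambda_i=N$ yields
\[
\tr(Y)+N\langle Yc',c'\rangle\,\le\,N,
\]
hence $\tr(Y)\le N$. By the AM--GM inequality applied to the eigenvalues of $Y$, $\det(Y)\le(\tr(Y)/N)^N\le 1$, so $\Vol(E(Y,c'))=\det(Y)^{-1/2}\omega_N\ge\omega_N=\Vol(B)$. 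Thus $B$ has minimal volume among ellipsoids containing $K'$, and equality throughout forces $c'=0$ and $Y=I$, which also recovers the uniqueness of the minimal ellipsoid.

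The main obstacle is the direction (i)$\Rightarrow$(ii): extracting the contact-point decomposition at the optimum. This is where the genuine work lies, since it requires the separation argument in $\Sym(N)\times\R^N$ and the explicit construction of a volume-decreasing, containment-preserving perturbation of $B$; by contrast, the affine reduction of the first paragraph and the trace/AM--GM estimate of the converse are comparatively routine.
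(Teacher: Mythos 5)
This statement is one the paper does \emph{not} prove: it is imported verbatim from G\"uler \cite[Theorem 12.9]{og10} (it is Fritz John's theorem in its L\"owner-ellipsoid form), and all of the paper's own work begins downstream of it, in Corollary \ref{cor:fjframes}. So your proposal is to be judged against the classical proof rather than against anything in the paper. Two of your three pieces are correct and complete: the affine reduction to the case $X=I$, $c=0$ is sound (the conditions \eqref{equ:activej}--\eqref{equ:last} do transform as you say, with the same multipliers), and your proof of (ii)$\Rightarrow$(i) -- the trace identity $\sum_i\lambda_i=N$, the estimate $\tr(Y)+N\langle Yc',c'\rangle\le N$ for any ellipsoid $E(Y,c')\supset K'$, and AM--GM -- is exactly the standard argument and even delivers uniqueness of the minimizer.

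The gap is in (i)$\Rightarrow$(ii), at precisely the step you identify as the crux, and it is a gap of mechanism, not of detail. Write $g(w):=\langle ww^T,H\rangle+\langle w,h\rangle$. For the perturbed ellipsoid $E_t=\{x: x^T(I+tH)x+t\langle h,x\rangle\le 1\}$ (the center shift absorbing $h$), a contact point $w$ satisfies $w^T(I+tH)w+t\langle h,w\rangle=1+t\,g(w)$, so keeping contact points inside requires $g(w)\le 0$; meanwhile the first-order decrease of $-\tfrac12\log\det(I+tH)\approx-\tfrac t2\tr(H)$ that you invoke requires $\tr(H)>0$. Your separation inequality $g(w)<\langle I,H\rangle=\tr(H)$ gives neither: it is perfectly compatible with every value $g(w)$ lying in the open interval $(0,\tr(H))$, in which case every contact point \emph{exits} $E_t$ for $t>0$ and no contradiction arises. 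Worse, the two requirements pull against each other. The standard fix (Ball \cite{kb97}, G\"uler \cite{og10}) is to separate $(I/N,0)$ -- which has the same trace component, namely $1$, as every point $(ww^T,w)$ -- from $\co\{(ww^T,w):w\in\mathcal{C}\}$, and then replace $H$ by $H-\tfrac{\tr(H)}{N}I$; this preserves the strict inequality (both sides drop by the same amount) and yields $g(w)<0$ on $\mathcal{C}$ but forces $\tr(H)=0$, whence $\det(I+tH)=1-\tfrac{t^2}{2}\|H\|_F^2+O(t^3)<1$ and the determinant term now \emph{increases} the volume. The volume gain must therefore come from a different source: compactness of $\mathcal{C}$ gives a uniform slack $g\le-2\veps_1<0$ on $\mathcal{C}$, hence $g\le-\veps_1$ on a neighborhood $U$; one takes $E_t=\{x: x^T(I+tH)x+t\langle h,x\rangle\le 1-t\veps_1\}$, verifies $K'\subset E_t$ for small $t$ by splitting $K'$ into $U$ (where the slack absorbs the perturbation) and $K'\setminus U$ (where $\sup\|x\|<1$), and computes $\Vol(E_t)=\omega_N\,(1-t\veps_1+O(t^2))^{N/2}\det(I+tH)^{-1/2}=\omega_N\bigl(1-\tfrac{N\veps_1}{2}t+O(t^2)\bigr)<\omega_N$. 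With this slack device (plus the routine observations that the cone is closed because $\mathcal{C}$ is compact and the trace component of its generators is $1$, and Carath\'eodory as you indicate), your outline becomes a correct proof; as written, the claimed contradiction does not materialize.
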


Given a frame $\Phi = \{\varphi_i\}_{i=1}^M\in\Fc_u(M,N)$, we will apply John's theorem to the convex hull of the symmetrized frame $\Phi_{\rm Sym}=\{\varphi_i\}_{i=1}^M\cup\{-\varphi_i\}_{i=1}^M$.
By $E_{\Phi}$ we will denote the minimal ellipsoid of the convex hull of $\Phi_{\rm Sym}$. We shall also call this ellipsoid the \emph{minimal ellipsoid of $\Phi$}. This is not in conflict with the notion of the minimal ellipsoid of a convex body since the finite set $\Phi$ is not a convex body. The next lemma says that the center of $E_\Phi$ is always 0.

\begin{lemma}\label{l:symmetric}
Let $K$ be a convex body which is symmetric about the origin. Then the center of the minimal ellipsoid of $K$ is 0.
\end{lemma}
\begin{proof}
Let $E(X,c)$ denote the minimal ellipsoid of $K$.
By definition, if $u\in K$ we also have $-u\in K$,  which implies
$$
\<X(u-c),u-c\rangle\,\le\,1
\quad\text{and}\quad
\<X(-u-c),-u-c\rangle \,\le\,1.
$$
Adding those inequalities, we obtain
$$
2\<Xu,u\rangle + 2\<Xc,c\rangle\,\le\,2.
$$
Since $X\in\R^{N\times N}$ is positive definite,  the above equation implies $\<Xu,u\rangle \le 1$ or, equivalently, $u\in E(X,0)$. This proves $K\subset E(X,0)$. And as $E(X,0)$ has the same volume as $E(X,c)$, it follows from the uniqueness of minimal ellipsoids that $c = 0$.
\end{proof}

In the following, we write $E(X)$ instead of $E(X,0)$. For completeness, we now state a version of Theorem~\ref{thm:fritz} that is specifically taylored to our situation.

\begin{corollary}\label{cor:fjframes}
Let $\Phi = \{\varphi_i\}_{i=1}^M\in\fr_u(M,N)$, and let $X$ be an $N\times N$ positive definite matrix. Then the following are equivalent:
\begin{enumerate}
\item[{\rm (i)}]  $E(X)$ is the minimal ellipsoid of $\Phi$.
\item[{\rm (ii)}] There exist nonnegative scalars $\{\rho_i\}_{i=1}^M$ such that
\begin{equation}\label{equ:X}
X^{-1} = \sum_{i=1}^M\,\rho_i\varphi_i\varphi_i^T,
\end{equation}
\begin{equation}\label{equ:inE}
\ip{X\varphi_i}{\varphi_i}\le 1\quad\text{for all }i=1,2,\ldots,M,
\end{equation}
\begin{equation}\label{equ:rhob}
\ip{X\varphi_i}{\varphi_i} = 1\text{ if $\rho_i > 0$}.
\end{equation}
\end{enumerate}
\end{corollary}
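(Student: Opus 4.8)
\section*{Proof proposal}

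The plan is to deduce the corollary directly from John's theorem (Theorem~\ref{thm:fritz}) applied to the convex body $K := \co(\Phi_{\rm Sym})$, whose minimal ellipsoid is by definition $E_\Phi$. Since $\Phi_{\rm Sym}$ is symmetric about the origin, so is $K$, and Lemma~\ref{l:symmetric} already fixes the center of the minimal ellipsoid at $c=0$; this is precisely why the centering vector disappears from all the conditions in (ii). The only work remaining is to translate the abstract multipliers and contact points of Theorem~\ref{thm:fritz} into statements about the frame vectors $\varphi_i$ themselves.

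The bridge between the two formulations is the observation that every contact point must be one of the $\pm\varphi_i$. Indeed, a contact point lies in $\partial K\cap\partial E(X)$; since the ellipsoid is strictly convex and $K\subset E(X)$, any point of $K$ lying on $\partial E(X)$ is an extreme point of $K$, and the extreme points of $\co(\Phi_{\rm Sym})$ form a subset of the finite set $\{\pm\varphi_i\}_{i=1}^M$. Crucially, the rank-one term $uu^T$ is insensitive to the sign of $u$, so a contact point $+\varphi_i$ and its antipode $-\varphi_i$ contribute identically to \eqref{equ:activej}, and $\langle X\varphi_i,\varphi_i\rangle=\langle X(-\varphi_i),-\varphi_i\rangle$.

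For the direction (i) $\Rightarrow$ (ii), I would invoke Theorem~\ref{thm:fritz} to obtain multipliers $\lambda_\ell>0$ and contact points $u_\ell=\pm\varphi_{j(\ell)}$, then set $\rho_i := \sum_{\ell:\,j(\ell)=i}\lambda_\ell$ (and $\rho_i:=0$ if neither $\varphi_i$ nor $-\varphi_i$ is a contact point). By the sign-insensitivity above, \eqref{equ:activej} with $c=0$ rewrites as \eqref{equ:X}. The inclusion $K\subset E(X)$ forces each $\varphi_i\in E(X)$, giving \eqref{equ:inE}, while $\rho_i>0$ means some $\pm\varphi_i$ is a contact point, so it lies on $\partial E(X)$ and hence $\langle X\varphi_i,\varphi_i\rangle=1$, which is \eqref{equ:rhob}.

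For the converse (ii) $\Rightarrow$ (i), the main obstacle will be the centering condition \eqref{equ:nonsym}: the hypotheses in (ii) impose no linear relation $\sum_i\rho_i\varphi_i=0$, so one cannot simply declare the $\varphi_i$ with $\rho_i>0$ to be contact points. The remedy is to exploit the central symmetry: for each $i$ with $\rho_i>0$ use the symmetric pair $+\varphi_i,-\varphi_i$, each with multiplier $\rho_i/2$. Then the first-order terms cancel in pairs, so $\sum_i(\rho_i/2)\varphi_i+\sum_i(\rho_i/2)(-\varphi_i)=0$ and \eqref{equ:nonsym} holds automatically, while the rank-one terms are unchanged, so $\sum_i(\rho_i/2)\varphi_i\varphi_i^T+\sum_i(\rho_i/2)(-\varphi_i)(-\varphi_i)^T=\sum_i\rho_i\varphi_i\varphi_i^T=X^{-1}$ reproduces \eqref{equ:activej}. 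Condition \eqref{equ:inE} guarantees $\Phi_{\rm Sym}\subset E(X)$, hence $K\subset E(X)$ by convexity, and \eqref{equ:rhob} places each chosen $\pm\varphi_i$ on $\partial E(X)$; since these points also lie in $K$ they lie on $\partial K$, giving \eqref{equ:last}. All hypotheses of Theorem~\ref{thm:fritz}(ii) then hold, so $E(X)$ is the minimal ellipsoid of $K$, that is, of $\Phi$.
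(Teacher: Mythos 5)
Your proof is correct and follows essentially the same route as the paper's: apply John's theorem (Theorem~\ref{thm:fritz}) to $\co(\Phi_{\rm Sym})$ with center $0$ fixed by Lemma~\ref{l:symmetric}, use the sign-insensitivity $\varphi_i\varphi_i^T=(-\varphi_i)(-\varphi_i)^T$ to get (i)$\Rightarrow$(ii), and for (ii)$\Rightarrow$(i) split each $\varphi_i$ with $\rho_i>0$ into the pair $\pm\varphi_i$ with half weights so that the centering condition \eqref{equ:nonsym} holds automatically. Your strict-convexity/extreme-point argument for why contact points must lie in $\{\pm\varphi_i\}_{i=1}^M$ is a detail the paper asserts without justification, so it is a welcome refinement rather than a different approach.
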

\begin{proof}
(i)$\Rightarrow$(ii). By John's theorem, the contact points must be points in the set $\Phi_{\rm Sym}$. Since $\varphi_i\varphi_i^T = (-\varphi_i)(-\varphi_i)^T$, equation \eqref{equ:activej} with the center $c=0$ implies that there exists $I\subset\{1,\ldots,M\}$ such that
$$
X^{-1} = \sum_{i\in I}\,\lambda_i\varphi_i\varphi_i^T.
$$
Setting $\rho_i=\lambda_i$ for $i\in I$ and $\rho_i=0$ for $i\notin I$, we get \eqref{equ:X}. Equation \eqref{equ:inE} follows from the fact that $\vphi_i\in E(X)$ for each $i = 1,\ldots,M$, and equation \eqref{equ:rhob} is implied by \eqref{equ:last}.

(ii)$\Rightarrow$(i). Let $I = \{i : \rho_i>0\}$. Then the assumptions imply conditions \eqref{equ:activej} and \eqref{equ:last}  with $\{u_i\}_{i\in I} = \{\varphi_i\}_{i\in I}$, and $\{\lambda_i\}_{i\in I} = \{\rho_i\}_{i\in I}$. We just need to slightly modify $\{u_i\}, \{\lambda_i\}$ to make it satisfy \eqref{equ:nonsym} as well. Indeed, we replace $u_i$ by the pair $\pm u_i$ each with half the weight of the original $\lambda_i$. Finally, \eqref{equ:inE} implies that the convex hull of $\Phi_{\rm Sym}$ is contained in $E(X)$. Now, (i) follows from the application of John's theorem.
\end{proof}

\begin{remark}\label{rem:activerho}
It is convenient to view \eqref{equ:X} as saying that $\{X^{1/2}\varphi_i\}_{i=1}^M$ is scalable with scalars $\{\sqrt{\rho_i}\}_{i=1}^M$. Therefore by \cite[Remark 3.12]{kop13c} (see also \cite[Corollary 3.4]{cc12}, since the dimension of $\text{span}\{\varphi_i\varphi_i\}_{i=1}^M$ is at most $\tfrac{N(N+1)}{2}$), we can always pick a set of $\rho_i$'s as in (ii) above such that the number of non-zero (i.e., positive) $\rho_i$'s does not exceed $\tfrac{N(N+1)}{2}$.
\end{remark}

Recall that in the introduction we defined a third measure of scalability $V_\Phi$ as follows:
\begin{equation}\label{equ:V}
V_{\Phi} = \frac{\Vol(E_{\Phi})}{\omega_N} = \det\left(X^{-1/2}\right).
\end{equation}
The second equality is due to \eqref{equ:vol}.

Let us now see how $V_\Phi$ relates to scalability of $\Phi$. If $\Phi\in\Fc_u(M,N)$ is scalable then \eqref{equ:X}--\eqref{equ:rhob} hold with $X = I$. Therefore, $E_\Phi = E(I)$ is the unit ball which implies $V_\Phi = 1$. Conversely, if $V_\Phi = 1$ then $E_\Phi$ must be the unit ball since the ellipsoid of minimal volume is unique. Hence, $E_\Phi = E(I)$, and \eqref{equ:X} implies that $\Phi$ is scalable. This quickly provides another characterization of scalability.

\begin{theorem}\label{pro:iff}
A frame $\Phi\in\fr_u(M,N)$ is scalable if and only if its minimal ellipsoid is the $N$-dimensional unit ball, in which case $V_\Phi = 1$.
\end{theorem}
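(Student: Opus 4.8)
The plan is to extract both directions directly from the John-type characterization in Corollary~\ref{cor:fjframes}, exploiting the single observation that, for a \emph{unit norm} frame, the matrix $X = I$ is an admissible choice in condition (ii) of that corollary precisely when $\Phi$ is scalable. Thus scalability, the assertion that $E_\Phi = E(I)$, and the value $V_\Phi = 1$ will all be seen to be the same statement read through the lens of Corollary~\ref{cor:fjframes} and the uniqueness of the minimal ellipsoid.

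For the forward implication I would suppose $\Phi$ is scalable, so that by \eqref{equ:scalar} there are nonnegative scalars $\{c_i\}_{i=1}^M$ with $\sum_{i=1}^M c_i\varphi_i\varphi_i^T = I$. I then check that $X = I$ satisfies Corollary~\ref{cor:fjframes}(ii): taking $\rho_i = c_i$ turns \eqref{equ:X} into $I^{-1} = \sum_{i=1}^M \rho_i\varphi_i\varphi_i^T$, which holds; and since $\|\varphi_i\|_2 = 1$ we get $\ip{I\varphi_i}{\varphi_i} = 1$ for every $i$, so \eqref{equ:inE} and \eqref{equ:rhob} hold with equality throughout. Hence $E(I)$, the unit ball, is the minimal ellipsoid of $\Phi$, and by \eqref{equ:V} we obtain $V_\Phi = \det(I^{-1/2}) = 1$.

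For the reverse implication I would assume that the minimal ellipsoid $E_\Phi$ is the unit ball, i.e.\ $E_\Phi = E(I)$ with $X = I$. Then Corollary~\ref{cor:fjframes}(i)$\Rightarrow$(ii), applied with $X = I$, yields nonnegative scalars $\{\rho_i\}_{i=1}^M$ with $\sum_{i=1}^M \rho_i\varphi_i\varphi_i^T = I^{-1} = I$, which is exactly the scalability condition \eqref{equ:scalar}. So $\Phi$ is scalable, and again $V_\Phi = \det(I^{-1/2}) = 1$, giving the final clause of the statement.

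The heavy lifting has already been absorbed into Corollary~\ref{cor:fjframes} and Lemma~\ref{l:symmetric}, so there is no genuinely hard computational step; the one point I would treat with care is the passage between ``$V_\Phi = 1$'' and ``$E_\Phi$ is the unit ball,'' since this is where the enclosing and normalization hypotheses actually enter. Because every vector of $\Phi_{\rm Sym}$ has norm $1$, the convex hull of $\Phi_{\rm Sym}$ lies in the unit ball $B$; as $B$ is itself an ellipsoid enclosing this convex hull, the minimal ellipsoid satisfies $\Vol(E_\Phi) \le \Vol(B) = \omega_N$, that is, $V_\Phi \le 1$. If $V_\Phi = 1$, then $E_\Phi$ and $B$ are two enclosing ellipsoids of the same minimal volume, so by the uniqueness of the minimal-volume ellipsoid they must coincide, returning us to the setting of the reverse implication. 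I would make this uniqueness argument explicit, as it is the only step where more than a direct reading of the characterization is needed.
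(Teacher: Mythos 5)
Your proposal is correct and follows essentially the same route as the paper: both directions are read off from Corollary~\ref{cor:fjframes} with $X = I$ (using that unit norm vectors give $\ip{I\varphi_i}{\varphi_i}=1$), and the passage from $V_\Phi = 1$ back to $E_\Phi = E(I)$ rests on the uniqueness of the minimal-volume ellipsoid, exactly as in the paper's argument preceding the theorem. Your only addition is to spell out explicitly the uniqueness step and the bound $V_\Phi \le 1$, which the paper states more tersely.
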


We can now prove an important property of the minimal ellipsoid $E_\Phi$ of a unit norm frame $\Phi$.

\begin{lemma}
Given $\Phi\in\fr_u(M,N)$, let $E(X)$ be the minimal ellipsoid of $\Phi$ where  $X^{-1} = \sum_{i=1}^M \rho_i\varphi_i\varphi_i^T$, and let $\{\lambda_i\}_{i=1}^N$ be the eigenvalues of $X^{-1}$. Then
 \begin{align}
\label{equ:vv}&V_{\Phi} = \prod_{i=1}^N \lambda_i^{1/2},\\
\label{equ:xin}&\tr\left(X^{-1}\right) = \sum_{i=1}^M\rho_i=\sum_{i=1}^N \lambda_i = N.
\end{align}
\end{lemma}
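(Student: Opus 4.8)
The plan is to derive all four equalities from the two structural facts already at hand: the representation $X^{-1}=\sum_{i=1}^M\rho_i\vphi_i\vphi_i^T$ together with the contact conditions \eqref{equ:inE}--\eqref{equ:rhob} supplied by Corollary~\ref{cor:fjframes}, and the elementary identity $V_\Phi=\det(X^{-1/2})$ recorded in \eqref{equ:V}. Nothing deeper about minimal ellipsoids is needed beyond what those statements encode.

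First I would dispose of \eqref{equ:vv}, which is purely linear algebra. Starting from \eqref{equ:V}, one has $V_\Phi=\det(X^{-1/2})=\det(X^{-1})^{1/2}$, and since $\{\lambda_i\}_{i=1}^N$ are by hypothesis the eigenvalues of the symmetric positive definite matrix $X^{-1}$, its determinant is $\prod_{i=1}^N\lambda_i$. Taking the square root gives $V_\Phi=\prod_{i=1}^N\lambda_i^{1/2}$. Two of the three equalities in \eqref{equ:xin} are equally routine: the relation $\sum_{i=1}^N\lambda_i=\tr(X^{-1})$ is just that the trace equals the sum of the eigenvalues, and for $\tr(X^{-1})=\sum_{i=1}^M\rho_i$ I would insert the representation \eqref{equ:X}, use linearity of the trace, and invoke $\tr(\vphi_i\vphi_i^T)=\|\vphi_i\|_2^2=1$, which holds because $\Phi$ is unit norm.

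The only step that genuinely uses the geometry of the minimal ellipsoid, and hence the crux of the argument, is the middle value $\sum_{i=1}^M\rho_i=N$. The idea is to compute $\tr(I)=\tr(XX^{-1})$ in two different ways. On the one hand it equals $N$. On the other hand, substituting \eqref{equ:X} and using linearity and cyclicity of the trace yields $\sum_{i=1}^M\rho_i\,\tr(X\vphi_i\vphi_i^T)=\sum_{i=1}^M\rho_i\,\ip{X\vphi_i}{\vphi_i}$. Here the contact condition \eqref{equ:rhob} is exactly what is required: whenever $\rho_i>0$ the factor $\ip{X\vphi_i}{\vphi_i}$ equals $1$, while the terms with $\rho_i=0$ vanish regardless. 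Hence the sum collapses to $\sum_{i=1}^M\rho_i$, and equating the two computations gives $\sum_{i=1}^M\rho_i=N$; combined with the previous paragraph this also yields $\tr(X^{-1})=N$.

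I do not expect any serious obstacle: the entire content of the lemma is the observation that pairing $X$ against its inverse representation converts the normalization $\ip{X\vphi_i}{\vphi_i}=1$ at the contact points into the trace identity $N=\sum_i\rho_i$. The one point to be slightly careful about is that the sum over $i$ with $\rho_i=0$ must be discarded before invoking \eqref{equ:rhob}, since \eqref{equ:rhob} only asserts $\ip{X\vphi_i}{\vphi_i}=1$ on the active indices; but this causes no difficulty because those terms are multiplied by $\rho_i=0$ anyway.
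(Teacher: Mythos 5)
Your proposal is correct and follows essentially the same route as the paper: the paper also gets \eqref{equ:vv} directly from \eqref{equ:V}, and proves $\sum_{i=1}^M\rho_i=N$ by writing $I=X^{1/2}X^{-1}X^{1/2}=\sum_{i=1}^M\rho_i u_iu_i^T$ with $u_i=X^{1/2}\vphi_i$ and taking traces, using \eqref{equ:rhob} to get $\|u_i\|_2=1$ on the active indices --- which is exactly your computation of $\tr(XX^{-1})$ via cyclicity, including the same care with the indices where $\rho_i=0$.
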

\begin{proof}
The relation \eqref{equ:vv} immediately follows from \eqref{equ:V}. To prove \eqref{equ:xin}, we set $u_i=X^{1/2}\varphi_i$. Then
\begin{equation}\label{equ_tight}
I = X^{1/2}X^{-1}X^{1/2} = X^{1/2}\left(\sum\limits_{i=1}^{M} \rho_i \varphi_i \varphi_i^T\right)X^{1/2} = \sum\limits_{i=1}^{M} \rho_i u_i u_i^T.
\end{equation}
In addition, we know that whenever $\rho_i>0$, we have $\ip{\varphi_i}{X\varphi_i}=1$, or equivalently $\|u_i\|_2=1$. Using this fact as well as \eqref{equ_tight}, we deduce
$$
\sum_{i=1}^M \rho_i = \sum_{\rho_i>0}\rho_i\tr(u_iu_i^T) = \tr\left(\sum_{i=1}^M \rho_i u_iu_i^T\right) = \tr(I) = N.
$$
The lemma is proved.
\end{proof}

Given a frame $\Phi$ with minimal ellipsoid $E_{\Phi} = E(X)$, we have shown in \eqref{equ:xin} that the trace of $X^{-1}$ is always fixed. This naturally raises the question whether any ellipsoid $E(X)$ with $\tr(X^{-1}) = N$ is necessarily  the minimal ellipsoid of some unit norm frame. The next theorem answers this question in the affirmative.

\begin{theorem}\label{thm:cha}
Every ellipsoid $E(X)$ with $\tr(X^{-1}) = N$ is the minimal ellipsoid of some frame $\Phi\in\fr_u(M,N)$.
\end{theorem}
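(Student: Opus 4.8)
The plan is to produce an explicit frame and then appeal to Corollary~\ref{cor:fjframes}. That corollary reduces the task to finding unit vectors $\{\varphi_i\}$ and nonnegative scalars $\{\rho_i\}$ satisfying \eqref{equ:X}--\eqref{equ:rhob} for the given matrix $X$. The decisive simplification is the change of variables $u_i := X^{1/2}\varphi_i$, which turns the three conditions into transparent requirements: $\varphi_i$ has unit norm precisely when $\ip{X^{-1}u_i}{u_i} = 1$; equation \eqref{equ:X} becomes the tightness relation $\sum_i \rho_i u_i u_i^T = I$ (conjugate by $X^{1/2}$); and \eqref{equ:inE}--\eqref{equ:rhob} hold automatically with equality as soon as each $u_i$ is a unit vector, since then $\ip{X\varphi_i}{\varphi_i} = \|u_i\|_2^2 = 1$. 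Thus it suffices to exhibit unit vectors $u_i$ lying on the set $S := \{u : \|u\|_2 = 1,\ \ip{X^{-1}u}{u} = 1\}$ together with positive weights $\rho_i$ for which $\{\sqrt{\rho_i}\,u_i\}$ is a tight frame.

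For the construction I would diagonalize $X^{-1} = \sum_{j=1}^N \lambda_j e_j e_j^T$ in an orthonormal eigenbasis $\{e_j\}_{j=1}^N$, where $\lambda_j > 0$ and, crucially, $\sum_j \lambda_j = \tr(X^{-1}) = N$ by hypothesis. I then take the $2^N$ diagonal sign vectors $u_\veps := \tfrac{1}{\sqrt N}\sum_{j=1}^N \veps_j e_j$ indexed by $\veps \in \{-1,+1\}^N$. Each $u_\veps$ is a unit vector, and a one-line computation gives $\ip{X^{-1}u_\veps}{u_\veps} = \tfrac1N\sum_j \lambda_j = 1$, so every $u_\veps$ lies on $S$; this is the one place where the trace hypothesis enters. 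Moreover, the sign-flip symmetry annihilates all off-diagonal contributions to $\sum_\veps u_\veps u_\veps^T$, yielding $\sum_\veps u_\veps u_\veps^T = \tfrac{2^N}{N}\,I$, so the $u_\veps$ form a tight frame with constant $2^N/N$.

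Finally I set $\varphi_\veps := X^{-1/2}u_\veps$ and $\rho_\veps := N/2^N > 0$. Then $\|\varphi_\veps\|_2^2 = \ip{X^{-1}u_\veps}{u_\veps} = 1$, so $\Phi := \{\varphi_\veps\}_\veps$ is a unit norm family, and it spans $\R^N$ because $\sum_\veps \rho_\veps \varphi_\veps\varphi_\veps^T = X^{-1}$ is invertible; hence $\Phi \in \fr_u(2^N,N)$. Conjugating the tightness relation $\sum_\veps \rho_\veps u_\veps u_\veps^T = I$ by $X^{-1/2}$ yields \eqref{equ:X}, while $\ip{X\varphi_\veps}{\varphi_\veps} = \|u_\veps\|_2^2 = 1$ gives \eqref{equ:inE} and \eqref{equ:rhob}. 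Corollary~\ref{cor:fjframes} then certifies that $E(X)$ is the minimal ellipsoid of $\Phi$, completing the proof.

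The only genuine obstacle is the \emph{simultaneity} of the two constraints: the chosen vectors must land on $S$ and at the same time assemble into a tight frame. Arbitrary points of $S$ will not be tight, and an arbitrary tight frame will not sit on $S$. The uniform sign-vector construction resolves both at once, and it makes manifest that $\tr(X^{-1}) = N$ is exactly the condition placing all uniform sign vectors on $S$. If a smaller frame is desired, one may keep a single representative of each antipodal pair $\pm u_\veps$, obtaining $M = 2^{N-1}$ vectors with weights $N/2^{N-1}$.
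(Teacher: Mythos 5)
Your construction is correct as far as it goes: the substitution $u_i = X^{1/2}\varphi_i$ does convert \eqref{equ:X}--\eqref{equ:rhob} into the conditions you state, the sign vectors $u_\veps = \tfrac{1}{\sqrt N}\sum_j \veps_j e_j$ lie on $S$ precisely because $\tr(X^{-1})=N$, the sign-flip symmetry gives $\sum_\veps u_\veps u_\veps^T = \tfrac{2^N}{N}I$, and Corollary \ref{cor:fjframes} then certifies $E(X)$ as the minimal ellipsoid of $\{X^{-1/2}u_\veps\}_\veps$. This is a genuinely different route from the paper's. The paper invokes an external existence result (Corollary 3.1 of \cite{cl10}) to obtain $N$ unit vectors with $\sum_{i=1}^N\varphi_i\varphi_i^T = X^{-1}$, and must then verify the contact condition $\ip{X\varphi_i}{\varphi_i}=1$ by a separate duality argument (picking $x\perp\{\varphi_j\}_{j\ne i}$ with $\ip{x}{\varphi_i}=1$). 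Your approach buys self-containedness and explicitness --- the contact conditions are automatic because every $u_\veps$ is a unit vector --- at the price of using exponentially many frame vectors.

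That price is where the one real gap sits. In the paper $M$ is a prescribed parameter: the theorem is used (e.g.\ in Proposition \ref{pro:tight}) and proved for the given $M\ge N$, which is why the paper's proof ends by adjoining $M-N$ unit norm vectors inside $E(X)$, assigned $\rho_i=0$, so that the minimal ellipsoid is unchanged. Your proof delivers only $M=2^N$ (or $2^{N-1}$). The upward direction is easy to repair by the same padding trick --- unit vectors inside $E(X)$ exist, e.g.\ a unit eigenvector of $X^{-1}$ for an eigenvalue $\ge 1$, and such an eigenvalue exists because the eigenvalues average to $1$ --- but you never say this. More seriously, the range $N\le M < 2^{N-1}$ (already $M=N$ when $N\ge 3$) is out of reach of the sign-vector construction: for $M=N$, the relation $\sum_{i=1}^N\rho_i u_iu_i^T=I$ with unit vectors forces $\{u_i\}$ to be an orthonormal basis lying on $S$, i.e.\ an orthonormal basis in which $X^{-1}$ has constant unit diagonal. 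Such a basis always exists when $\tr(X^{-1})=N$ (a Schur--Horn-type fact, which is exactly what \cite{cl10} supplies), but uniform sign vectors produce one only when a Hadamard-type structure of order $N$ is available. So under the reading of the statement that the paper actually uses, you still need either the padding remark together with a separate argument for small $M$, or the paper's appeal to \cite{cl10}; under the weaker reading ``some frame, of some length,'' your proof is complete.
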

\begin{proof}
Given any invertible positive definite matrix $X^{-1}$ whose trace is $N$, there exists  $\Phi' = \{\varphi_i\}_{i=1}^N \in\Fc_u(N,N)$ such that
\begin{equation}\label{equ:1}
X^{-1}=\sum_{i=1}^N\varphi_i\varphi_i^T.
\end{equation}
This is a direct result of Corollary 3.1 in \cite{cl10}.

Next, we show that $\ip{X\varphi_i}{\varphi_i} = 1$ for all $i=1,\ldots,N$. For this, fix $j\in\{1,\ldots,N\}$ and choose $x\in\{\varphi_i : i\neq j\}^\perp$ with $\ip{x}{\varphi_j} = 1$. Then
$$
1 = \ip{x}{\varphi_j} = \left\<\sum_{i=1}^NX\varphi_i\varphi_i^Tx,\varphi_j\right\> = \ip{X\varphi_j\varphi_j^Tx}{\varphi_j} = \ip{X\varphi_j}{\varphi_j}.
$$
Now, it follows from Corollary \ref{cor:fjframes} that $E(X)$ is the minimal ellipsoid of $\Phi'$. Construct $\Phi\in \fr_u(M,N)$ by adding $M-N$ unit norm vectors inside $E(X)$ to $\Phi'$. Then $E(X)$ is also the minimal ellipsoid of $\Phi$ since \eqref{equ:1} still holds with $N$ replaced by $M$ and $\rho_i = 0$ for $i=N+1,\ldots,M$.
\end{proof}


\begin{remark}
It is possible using the geometric characterization of scalable frames by $V_\Phi$ to define an equivalence relation on $\Fc_u(M,N)$. Indeed, $\Phi,\Psi\in\fr_u(M,N)$ can be defined to be equivalent if $V_{\Phi} = V_{\Psi}$. We denote each equivalence class by the unique volume for all its members. Specifically, for any $0<a\leq 1$, the class $P[M, N, a]$ consists of all $\Phi\in \fr_u(M,N)$ with $V_{\Phi}=a$. Then, $\sca(M,N)=P[M,N,1]$. This also allows a parametrization of $\fr_u(M,N)$:
$$
\fr_u(M,N) = \bigcup_{a\in (0,1]}P[M,N,a].
$$
\end{remark}

\vspace{.5cm}
\section{Comparison of the Measures}\label{sec:comp}
In this section, we relate the three measures $d_\Phi$, $D_\Phi$, and $V_\Phi$ of scalability to each other. Hereby, we will frequently make use of the standard inequalities in the following lemma, in particular the arithmetic geometric means inequality.

\begin{lemma}
Given $a_i > 0$, $i=1,\ldots,N$, we have
\begin{align}
\label{equ:ine}\frac{N}{\sum_{i=1}^N a_i^{-1}}\,\leq\,\prod_{i=1}^Na_i^{\frac{1}{N}}\,\leq\,\frac{\sum_{i=1}^N a_i}{N},\\
\label{equ:prod}\sum_{i<j}a_ia_j\,\geq\,\frac{N(N-1)}{2}\prod_{i=1}^Na_i^{\frac{2}{N}}.
\end{align}
The inequality \eqref{equ:prod} is a special case of the right hand side inequality of \eqref{equ:ine}.
\end{lemma}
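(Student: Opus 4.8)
The plan is to reduce everything to the classical arithmetic--geometric mean (AM-GM) inequality, which I take as the known right-hand inequality in \eqref{equ:ine}. Concretely, for positive reals the statement $\prod_{i=1}^N a_i^{1/N} \le \frac{1}{N}\sum_{i=1}^N a_i$ is standard, so I would simply invoke it (or note that it follows from concavity of $\log$ via Jensen's inequality applied to $\log\left(\frac1N\sum a_i\right) \ge \frac1N\sum\log a_i$). The remaining two claims then fall out by elementary manipulations, so there is no real content beyond organizing the reductions.

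First I would derive the left-hand (harmonic--geometric) inequality in \eqref{equ:ine} by applying AM-GM to the reciprocals $a_i^{-1}$, which are again positive. This gives $\prod_{i=1}^N a_i^{-1/N} \le \frac{1}{N}\sum_{i=1}^N a_i^{-1}$. Since $\prod_{i=1}^N a_i^{-1/N} = \bigl(\prod_{i=1}^N a_i^{1/N}\bigr)^{-1}$, taking reciprocals of both sides (which reverses the inequality, all quantities being strictly positive) yields $\frac{N}{\sum_{i=1}^N a_i^{-1}} \le \prod_{i=1}^N a_i^{1/N}$, which is exactly the left half of \eqref{equ:ine}.

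For \eqref{equ:prod} I would apply the right-hand inequality of \eqref{equ:ine}, not to the $a_i$ themselves, but to the family of $\binom{N}{2} = \tfrac{N(N-1)}{2}$ pairwise products $\{a_ia_j : i<j\}$. AM-GM for this family reads
$$
\Bigl(\prod_{i<j} a_ia_j\Bigr)^{\frac{2}{N(N-1)}} \;\le\; \frac{2}{N(N-1)}\sum_{i<j} a_ia_j.
$$
The only step needing a moment's care is evaluating the product on the left: each fixed index $k$ occurs in exactly $N-1$ of the pairs $(i,j)$ with $i<j$, so $\prod_{i<j} a_ia_j = \prod_{k=1}^N a_k^{N-1}$, and therefore $\bigl(\prod_{i<j} a_ia_j\bigr)^{2/(N(N-1))} = \prod_{k=1}^N a_k^{2/N}$. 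Substituting this into the displayed bound and multiplying through by $\tfrac{N(N-1)}{2}$ gives precisely \eqref{equ:prod}.

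I do not anticipate a genuine obstacle: both inequalities are classical and the argument is self-contained once AM-GM is granted. The single point requiring attention is the combinatorial bookkeeping $\prod_{i<j} a_ia_j = \prod_{k=1}^N a_k^{N-1}$, which explains transparently why the exponent $2/N$ and the prefactor $\tfrac{N(N-1)}{2}$ appear, and which is exactly the sense in which \eqref{equ:prod} is a special case of the AM-GM half of \eqref{equ:ine}.
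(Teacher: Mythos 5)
Your proof is correct and follows exactly the route the paper intends: the paper states this lemma without proof, treating \eqref{equ:ine} as the classical HM--GM--AM inequalities and remarking only that \eqref{equ:prod} is ``a special case'' of the AM--GM half, which is precisely the reduction you carry out. Your application of AM--GM to the $\tfrac{N(N-1)}{2}$ pairwise products $a_ia_j$, together with the bookkeeping $\prod_{i<j}a_ia_j=\prod_{k=1}^N a_k^{N-1}$, is the intended argument and is complete.
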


\subsection{Comparison of $D_\Phi$ and $V_\Phi$}
Given a frame $\Phi\in\Fc_u(M,N)$, by definition $V_{\Phi}\leq 1$. Moreover, by Theorem~\ref{pro:iff}, we have $V_{\Phi}=1$ if and only if the frame is scalable.  Intuitively, when a frame is scalable, the frame vectors spread out in the space, which makes its minimal ellipsoid to be the unit ball. But when a frame gets more and more non-scalable, the frame vectors tend to bundle in one place, and thus produce a very ``flat'' ellipsoid with small volume. In this section, we formalize this intuition, and establish that $V_{\Phi}$ is just as suitable as $D_{\Phi}$ in quantifying how scalable a frame is.

We first consider the 2-dimensional case, where there is a straightforward characterization of scalability:  $\Phi=\{\varphi_i\}_{i=1}^M$ is a scalable frame of $\R^2$ if and only if the smallest double cone (with apex at origin) containing all the frame vectors of $\Phi_{\rm Sym}$ has an apex angle greater than or equal to $\pi/2$. This is essentially proved in \cite[Corollary 3.8]{kopt13}; See also Remark \ref{rem:N2} (b).

\begin{example}\label{ex:2dim}
Given $\Phi\in\Fc_u(M,2)$, suppose $\varphi_1,\varphi_2\in\Phi_{\rm Sym}$ generate the smallest cone containing $\Phi_{\rm Sym}$. Without loss of generality, we assume $\vphi_1 = (\cos\theta, \sin\theta)$ and $\vphi_2 = (\cos\theta, -\sin\theta)$, where $2\theta$ is the apex angle. We have $E_\Phi = E_{\{\vphi_1,\vphi_2\}}$,
and this ellipsoid is determined by the solution of the following problem:
\[
\min_{a,b} ab \qquad \text{s.t.} \qquad \frac{\cos^2\theta}{a^2}+\frac{\sin^2\theta}{b^2}=1.
\]
The solution is $a=\sqrt{2}\cos \theta $, $b=\sqrt 2 \sin \theta.$ So in this case,
$$
X^{-1}=\left(\begin{matrix} 2\cos^2\theta & 0\\ 0 & 2\sin^2 \theta \end{matrix}\right)=\varphi_1\varphi_1^T+\varphi_2\varphi_2^T,
$$
and $V_{\Phi} = \det(X^{-1/2}) = \sin 2\theta$.

Now let us calculate $D_{\Phi}$. Since all vectors of $\Phi_{\rm Sym}$ are contained in the cone $\{\pm(a\varphi_1+b\varphi_2), a,b\geq0\}$, any $\varphi_i$ can be represented as $\varphi_i = c\varphi_1 + d\varphi_2$ with $cd\ge 0$. Thus $\varphi_i\varphi_i^T=c^2\varphi_1\varphi_1^T+d^2\varphi_2\varphi_2^T+cd(\varphi_1 \varphi_2^T+\varphi_2\varphi_1^T)$. Therefore,  the Frobenius norm minimization problem becomes
\[
\min\limits_{a,b,c\geq0}\left\|a\varphi_1\varphi_1^T+b\varphi_2\varphi_2^T+c(\varphi_1 \varphi_2^T+\varphi_2\varphi_1^T)-I\right\|_F.
\]
The solution of this problem is $a=b=\frac{2}{3 + \cos4\theta}$, $c=0$, and thus
$$
D_\Phi^2 = 2-2a = 2-\frac{2}{2-V_{\Phi}^2}.
$$
So, as $V_{\Phi}$ is approaching 1, $D_{\Phi}$ is approaching 0, and vice versa.
\end{example}

In Example \ref{ex:2dim} it is shown that in the 2-dimensional case, $V_{\Phi}$ is a function of $D_{\Phi}$. However, in general $V_{\Phi}$ is no longer uniquely determined by $D_{\Phi}$ but falls into a range defined by $D_{\Phi}$ as the following theorem indicates. But the key point here is that it still remains true that $D_{\Phi}$ approaches zero if and only if the volume ratio tends to one.

\begin{theorem}\label{thm_VD}
Let $\Phi=\{\varphi_i\}_{i=1}^M\in \fr_u(M,N)$, then
\begin{equation}\label{equ:VD}
\frac{N(1-D_{\Phi}^2)}{N-D_{\Phi}^2}\,\leq\,V_{\Phi}^{4/N}\,\leq\,\frac{N(N-1-D_\Phi^2)}{(N-1)(N-D_\Phi^2)}\,\le\,1,
\end{equation}
where the leftmost inequality requires $D_{\Phi}<1$. Consequently, $V_{\Phi}\rightarrow 1$ is equivalent to $D_{\Phi}\rightarrow 0$.
\end{theorem}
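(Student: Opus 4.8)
The plan is to work entirely with the eigenvalues $\lambda_1,\dots,\lambda_N$ of $X^{-1}$, where $E(X)$ is the minimal ellipsoid of $\Phi$. By \eqref{equ:vv} and \eqref{equ:xin} these satisfy $\sum_{i=1}^N\lambda_i=N$ and $V_\Phi^{4/N}=\prod_{i=1}^N\lambda_i^{2/N}$, so the whole theorem becomes a statement relating the fixed quantity $\sum_i\lambda_i=N$ and the spread $\sum_i\lambda_i^2$ to $D_\Phi$, together with the arithmetic--geometric inequalities \eqref{equ:ine}--\eqref{equ:prod}. The bridge between $D_\Phi$ and $\sum_i\lambda_i^2$ comes from feasibility in the quadratic program \eqref{g}: taking the John multipliers $\rho=(\rho_i)$ of \eqref{equ:X}, one has $\one^T\rho=\sum_i\rho_i=N$ and $\rho^TF\rho=\|\Phi\,\diag(\rho)\,\Phi^T\|_F^2=\|X^{-1}\|_F^2=\sum_i\lambda_i^2$. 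Hence $g(\alpha\rho)=\alpha^2\sum_i\lambda_i^2-2\alpha N+N$, and minimizing this feasible value over $\alpha\ge0$ gives
\[
D_\Phi^2=\min_{c\ge0}g(c)\le\min_{\alpha\ge0}g(\alpha\rho)=N-\frac{N^2}{\sum_i\lambda_i^2},
\]
that is, $\sum_i\lambda_i^2\ge\frac{N^2}{N-D_\Phi^2}$ (with $D_\Phi^2<N$, so the right-hand side is positive).

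For the upper bound I would then simply feed this into \eqref{equ:prod}. Writing $\sum_{i<j}\lambda_i\lambda_j=\tfrac12\big(N^2-\sum_i\lambda_i^2\big)$, inequality \eqref{equ:prod} applied to $\lambda_1,\dots,\lambda_N$ reads $\tfrac12\big(N^2-\sum_i\lambda_i^2\big)\ge\tfrac{N(N-1)}2\,V_\Phi^{4/N}$, whence $V_\Phi^{4/N}\le\frac{N^2-\sum_i\lambda_i^2}{N(N-1)}$. Substituting $\sum_i\lambda_i^2\ge\frac{N^2}{N-D_\Phi^2}$ (which only increases the right-hand side) collapses this to exactly $\frac{N(N-1-D_\Phi^2)}{(N-1)(N-D_\Phi^2)}$, and the final estimate $\le1$ in \eqref{equ:VD} is the one-line check that $(N-1)(N-D_\Phi^2)-N(N-1-D_\Phi^2)=D_\Phi^2\ge0$. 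This direction I expect to be routine.

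The lower bound is where the real work is, and I expect it to be the main obstacle. The feasibility argument above only produces a \emph{lower} bound on $\sum_i\lambda_i^2$, which controls the product from above but never from below---indeed $\sum_i\lambda_i=N$ alone permits $\prod_i\lambda_i\to0$---so a genuinely new ingredient is needed that forces $D_\Phi$ to be large whenever the minimal ellipsoid is flat. I would obtain it by certifying the cone distance from below: since $\langle X\varphi_i,\varphi_i\rangle\le1=\|\varphi_i\|^2$ by \eqref{equ:inE}, the symmetric matrix $W:=X-I$ satisfies $\tr(W\varphi_i\varphi_i^T)\le0$ for every $i$; consequently, for any $c\ge0$, $\langle W,\Phi C\Phi^T-I\rangle\le-\tr W$, and Cauchy--Schwarz gives $\|\Phi C\Phi^T-I\|_F\ge\tr(W)/\|W\|_F$. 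Taking the minimum over $c\ge0$, and noting $\tr X=\sum_i\lambda_i^{-1}\ge N$ by the left inequality of \eqref{equ:ine}, yields
\[
D_\Phi\ \ge\ \frac{\tr X-N}{\|X-I\|_F}.
\]
Because $\frac{N(1-t)}{N-t}$ is decreasing in $t$, it suffices to prove the leftmost inequality of \eqref{equ:VD} with $D_\Phi^2$ replaced by this lower bound; combined with the left inequality of \eqref{equ:ine}, which gives $V_\Phi^{2/N}=\prod_i\lambda_i^{1/N}\ge N/\sum_i\lambda_i^{-1}=N/\tr X$, the whole claim reduces to an estimate on $\tr X=\sum_i\lambda_i^{-1}$ alone. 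The delicate point---and the heart of the proof---is carrying out this symmetric-function estimate and seeing precisely where the hypothesis $D_\Phi<1$ enters, namely as exactly what keeps the numerator $1-D_\Phi^2$ positive and the certificate effective. As a sanity anchor the scheme is sharp for $N=2$: there are only two eigenvalues, the certificate $W=X-I$ is exact, and both outer bounds in \eqref{equ:VD} collapse to the single value $V_\Phi^2=\frac{2(1-D_\Phi^2)}{2-D_\Phi^2}$ already computed in Example \ref{ex:2dim}.
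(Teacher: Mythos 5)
Your upper bound is correct and is, modulo rearrangement, exactly the paper's own argument: feasibility of $\alpha\rho$ in the quadratic program gives $\sum_i\lambda_i^2\ge N^2/(N-D_\Phi^2)$, and \eqref{equ:prod} converts this into the right-hand inequality of \eqref{equ:VD}. Your dual-certificate inequality for the lower bound is also correct, and it is in fact a cleaner derivation of precisely the paper's key estimate \eqref{equ:vd1}: the chain there amounts to $\tr(X)-N\le D_\Phi\|X-I\|_F$, which is your $D_\Phi\ge(\tr(X)-N)/\|X-I\|_F$. Up to this point you are on the paper's path.

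The gap is in how you propose to finish, and it is not merely that you defer the ``symmetric-function estimate'': the reduction you sketch for it fails. You suggest replacing $V_\Phi^{2/N}$ by its lower bound $N/\tr(X)$ from \eqref{equ:ine}, so that ``the whole claim reduces to an estimate on $\tr(X)$ alone.'' Test this on $N=2$ with $X^{-1}=\diag(1/2,3/2)$, which by Theorem \ref{thm:cha} is the minimal ellipsoid of some unit norm frame: then $\tr(X)=8/3$, $\|X-I\|_F^2=10/9$, so $L:=(\tr(X)-N)^2/\|X-I\|_F^2=2/5$ and $\frac{N(1-L)}{N-L}=\frac34$, while $V_\Phi^{4/N}=\lambda_1\lambda_2=\frac34$ but $N^2/\tr(X)^2=\frac{9}{16}<\frac34$. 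So the inequality you still need, $V_\Phi^{4/N}\ge\frac{N(1-L)}{N-L}$, holds here with \emph{equality}, and the quantity $N^2/\tr(X)^2$ you propose to substitute for the left side already falls strictly below the right side; any argument that first discards $V_\Phi$ in favor of $\tr(X)$ is therefore doomed (ironically, your own $N=2$ ``sanity anchor'' is the counterexample). The correct completion --- the paper's --- keeps $V_\Phi$ in the estimate: apply \eqref{equ:prod} with $a_i=\lambda_i^{-1}$ to get
\[
\|X-I\|_F^2\,\le\,\tr(X)^2-2\tr(X)+N-N(N-1)V_\Phi^{-4/N},
\]
so that the certificate inequality becomes a quadratic inequality in the unknown $t=\tr(X)$,
\[
(t-N)^2\,\le\,D_\Phi^2\left(t^2-2t+N-N(N-1)V_\Phi^{-4/N}\right),
\]
and then eliminate $t$ not by estimating it but by solvability: since $D_\Phi^2<1$ the leading coefficient $1-D_\Phi^2$ is positive, and completing the square yields
\[
\left(t-\frac{N-D_\Phi^2}{1-D_\Phi^2}\right)^2\,\le\,\frac{D_\Phi^2(N-1)}{(1-D_\Phi^2)^2}\left(N-D_\Phi^2-(1-D_\Phi^2)NV_\Phi^{-4/N}\right),
\]
which forces the factor in parentheses on the right to be nonnegative --- exactly the leftmost inequality of \eqref{equ:VD}. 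This is also where the hypothesis $D_\Phi<1$ genuinely enters: as the sign of the leading coefficient that legitimizes this elimination of $\tr(X)$, not merely as ``what keeps the numerator positive.''
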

\begin{proof}
The rightmost inequality is clear. Let us prove the upper bound on $V_\Phi^{4/N}$ in \eqref{equ:VD}. For this, let $E_\Phi = E(X)$ be the minimal ellipsoid of $\Phi$, and let $\{\lambda_i\}_{i=1}^N$ be the eigenvalues of $X^{-1}  = \sum_{i=1}^M\rho_i\varphi_i\varphi_i^T$. For any $\alpha>0$, we have
$$
D_{\Phi}^2\leq\left\|\sum_{i=1}^M\alpha\rho_i\varphi_i\varphi_i^T - I\right\|_F^2=  \|\alpha X^{-1}-I\|_F^2 = \sum\limits_{i=1}^{N} (\alpha\lambda_i-1)^2 = \alpha^2\sum\limits_{i=1}^{N}\lambda_i^2 - 2\alpha\sum \limits_{i=1}^{N}\lambda_i + N.
$$
Therefore, by \eqref{equ:xin},\begin{equation}\label{equ:Dphi}
D_{\Phi}^2  \leq\min_{\alpha>0}\left(\alpha^2\sum\limits_{i=1}^{N}\lambda_i^2 - 2\alpha\sum \limits_{i=1}^{N}\lambda_i + N\right)=N-\frac{N^2}{\sum_{i=1}^N\lambda_i^2}.
\end{equation}
We use \eqref{equ:vv} and \eqref{equ:prod} to estimate $\sum_{i=1}^N\lambda_i^2$:
\begin{align}\notag
\sum\limits_{i=1}^{N}\lambda_i^2&=\left(\sum \limits_{i=1}^{N}\lambda_i\right)^2 - 2\sum\limits_{i<j}\lambda_i\lambda_j=N^2-2\sum\limits_{i<j}\lambda_i\lambda_j\\
\label{equ:lambda}&\leq N^2-N(N-1)\prod_{i=1}^{N}\lambda_i^{2/N}=N^2-N(N-1)V_\Phi^{4/N}.
\end{align}
Plugging \eqref{equ:lambda} in \eqref{equ:Dphi} and solving it for $V_\Phi^{4/N}$ yields the upper bound in \eqref{equ:VD}.

For the lower bound, let $\hat C = \text{diag}\{c_i\}_{i=1}^{M}$ be a minimizer of \eqref{equ:proj}. Then $D_{\Phi}=\|\Phi \hat C \Phi^T-I\|_F$. Moreover,
\begin{align*}
\tr(\Phi \hat C\Phi^TX)=\sum\limits_{i=1}^{M} c_i \varphi_i^T X \varphi_i\leq \sum\limits_{i=1}^{M} c_i.
\end{align*}
The last inequality holds due to \eqref{equ:inE}. Therefore,
\begin{align}
\begin{split}\label{equ:vd1}
\tr(X)
&=\tr(\Phi \hat C\Phi^T X) -\tr((\Phi \hat C \Phi^T-I)X)\\
&= \tr(\Phi \hat C\Phi^T X) - \tr(\Phi \hat C\Phi^T-I) - \tr((\Phi \hat C\Phi^T-I)(X-I))\\
&\leq \sum_{i=1}^M c_i - \left(\sum_{i=1}^M c_i - N\right) - \tr((\Phi \hat C\Phi^T-I)(X-I))\\
&\leq  N + \|\Phi \hat C \Phi^T - I\|_F\|X-I\|_F\\
& =  N + D_{\Phi}\sqrt{\sum_{i=1}^N\left(\lambda_i^{-1}-1\right)^2}\\
& =  N + D_{\Phi}\sqrt{\left(\sum_{i=1}^N\lambda_i^{-1}\right)^2 - 2\sum\limits_{i<j}\lambda_i^{-1}\lambda_j^{-1} - 2\sum\limits_{i=1}^{N}\lambda_i^{-1} + N}\\
&\leq N+D_\Phi \sqrt{\tr^2(X)-N(N-1)V_{\Phi}^{-4/N}-2 \tr(X)+N},
\end{split}
\end{align}
where the last inequality is due to \eqref{equ:prod} with $a_i=\lambda_i^{-1}$, and \eqref{equ:vv}. By \eqref{equ:ine},
\begin{equation}\label{equ:trX}
\tr(X) = \sum_{i=1}^{N}\lambda_i^{-1}\geq\frac{N}{\prod\limits_{i=1}^{N}\lambda_i^{1/N}}=NV_{\Phi}^{-2/N}\geq N.
\end{equation}
Now, we subtract $N$ on both sides of \eqref{equ:vd1}, square both sides, and obtain
\[
\left(\tr(X)-N\right)^2\leq D_\Phi^2\left(\tr^2(X) - 2 \tr(X) + N - N(N-1)V_{\Phi}^{-4/N}\right).
\]

The latter inequality is equivalent to
$$
\left(\tr(X) - \frac{N-D_\Phi^2}{1-D_\Phi^2}\right)^2\le\frac{D_\Phi^2(N-1)}{(1-D_\Phi^2)^2}\left(N-D_\Phi^2 - (1-D_\Phi^2)NV_\Phi^{-4/N}\right).
$$
This proves that
$$
N - D_\Phi^2 - (1-D_\Phi^2)NV_\Phi^{-4/N}\,\ge\,0,
$$
which is equivalent to the leftmost inequality in \eqref{equ:VD}.

\end{proof}

\subsection{Algorithms and Numerical Experiments}\label{sec:complexity}
The computation in \eqref{equ:quadratic} shows that $D_{\Phi}$ can be computed via  Quadratic Programming (QP). As is well known, this problem can be solved by many well developed methods, e.g., Active-Set, Conjugate Gradient, Interior point.

The minimal ellipsoid problem has been studied for half a century. For a given convex body $K$ and a small quantity $\eta>0$, a fast algorithm to compute an ellipsoid $E\supseteq K$ with
$$
\Vol(E)\leq (1+\eta)\Vol(\text{Minimal ellipsoid}(K))
$$
is the Khachiyan's barycentric coordinate descent algorithm \cite{kha}, which needs a total of $O(M^{3.5}\ln(M\eta^{-1}))$ operations. For the case $N\ll M$, Kumar and Yildirim \cite{kumar} improved this algorithm using core sets and reduced the complexity to $O(MN^3\eta^{-1})$.

For all numerical simulations in this paper,   we use Khachiyan's method to compute minimal ellipsoids and the active set method to solve the quadratic programming in \eqref{equ:proj}. As expected, we have observed a much faster computational speed of the latter, especially when the problem grows large in size.

Figure \ref{fig:VD} shows the values of $D_{\Phi}$ and $V_{\Phi}$ for randomly generated frames in $\Fc_u(M, 4)$ with $M=6, 11, 15, $ and $20$. In each plot, we generated $1000$ frames, where each column of the frame is chosen uniformly at random from the unit sphere, and calculated both $V_{\Phi}$ and $D_{\Phi}$.

As expected, for a fixed $D_\Phi$, we see a range of $V_{\Phi}$. For a direct comparison, we plotted the two bounds from \eqref{equ:VD}. The lower bound from \eqref{equ:VD} is quite optimal  based on the figure.

On the other hand,  as $M$ increases, we  observe a change of concentration of the points from  scattering around to being heavily distributed around $D_{\Phi}=0$: ``the scalable region''. Indeed, as shown by Theorem~\ref{thm:pro} in Section \ref{sec:prob}, the threshold for having positive probability of scalable frames in dimension $N = 4$ is $N(N+1)/2 = 10$. Therefore, we have considerably many points achieving $D_{\Phi}=0$ for $M=11, 15, 20$. In fact, about $60\%$ of these $1000$ frames in $\fr(4,20)$ are scalable (up to a machine error).

\begin{figure}[ht]
 \centering
 \subfigure{
  \includegraphics[width=0.47\textwidth]{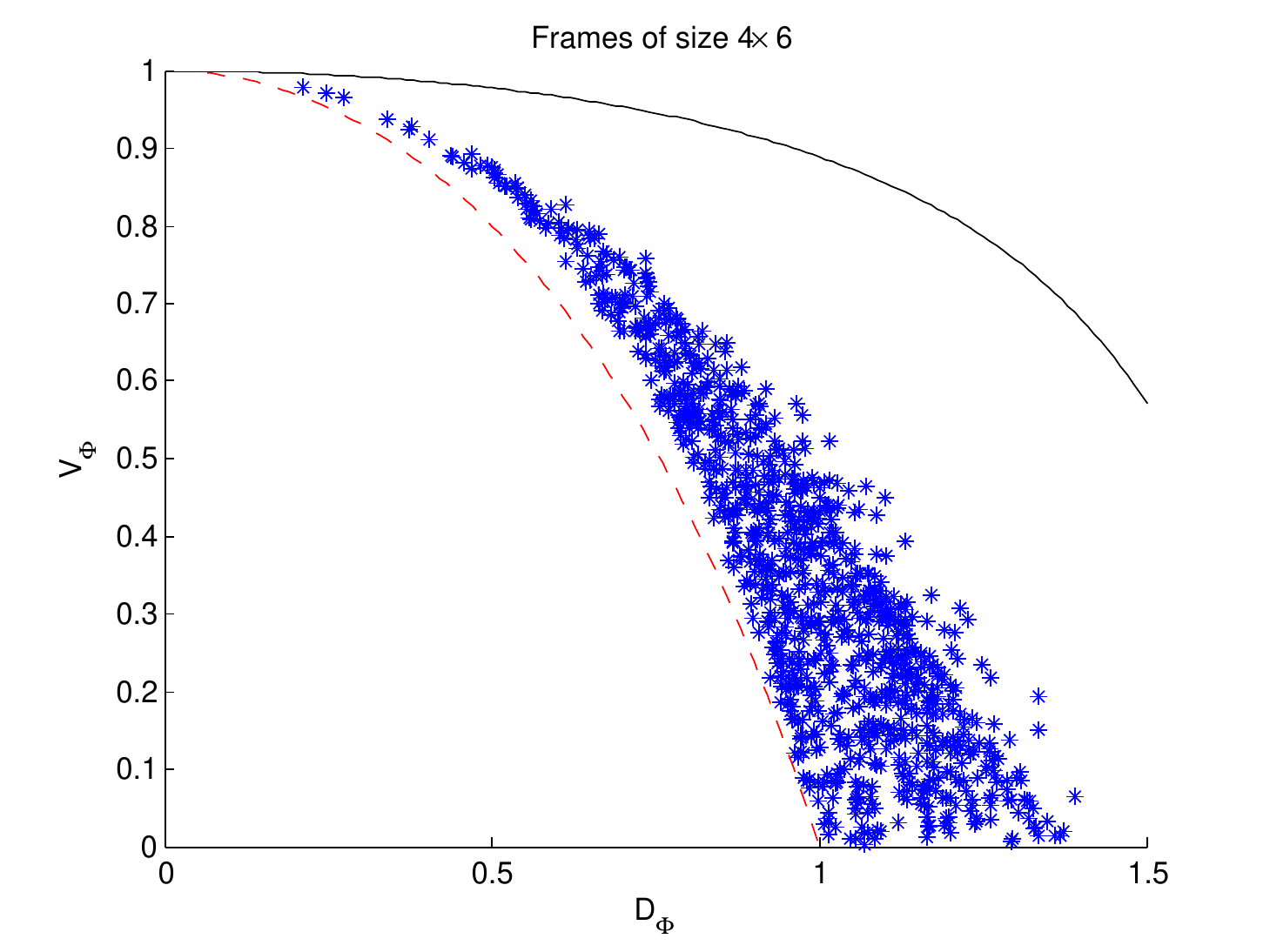}}
 \subfigure
 {
   \includegraphics[width=0.47\textwidth]{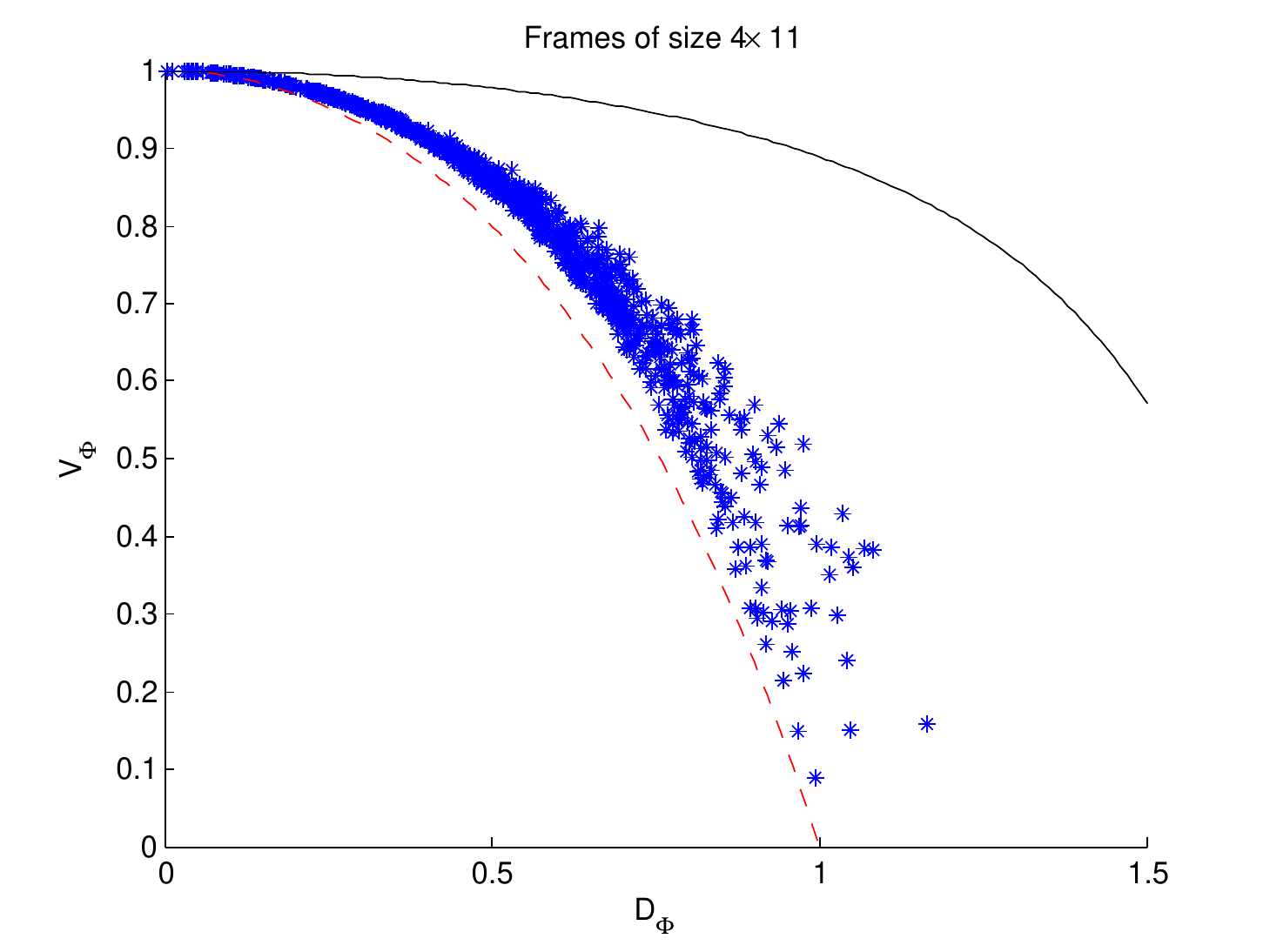}
   }
 \subfigure{
   \includegraphics[width=0.47\textwidth]{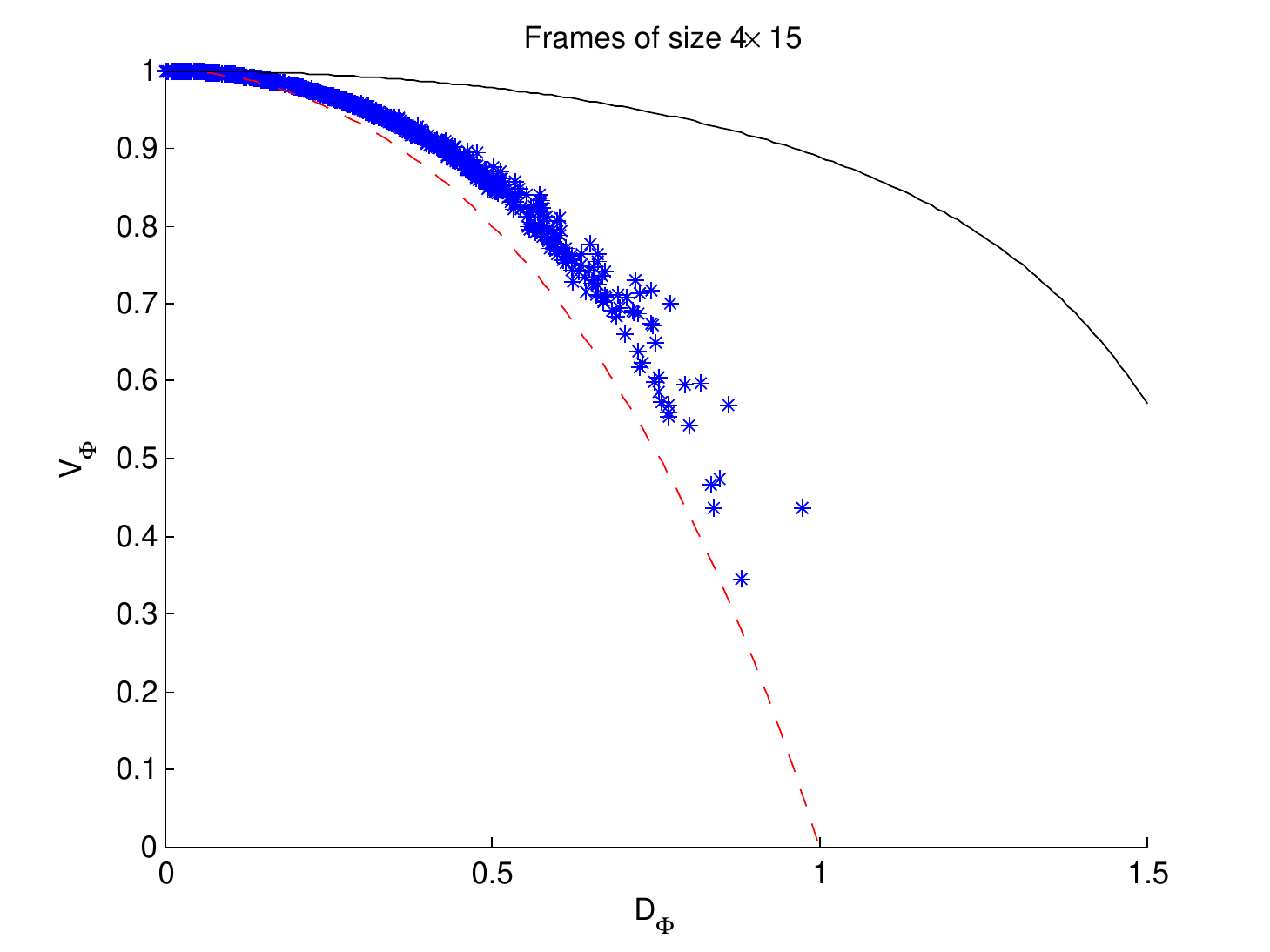}
   }
   \subfigure{
   \includegraphics[width=0.47\textwidth]{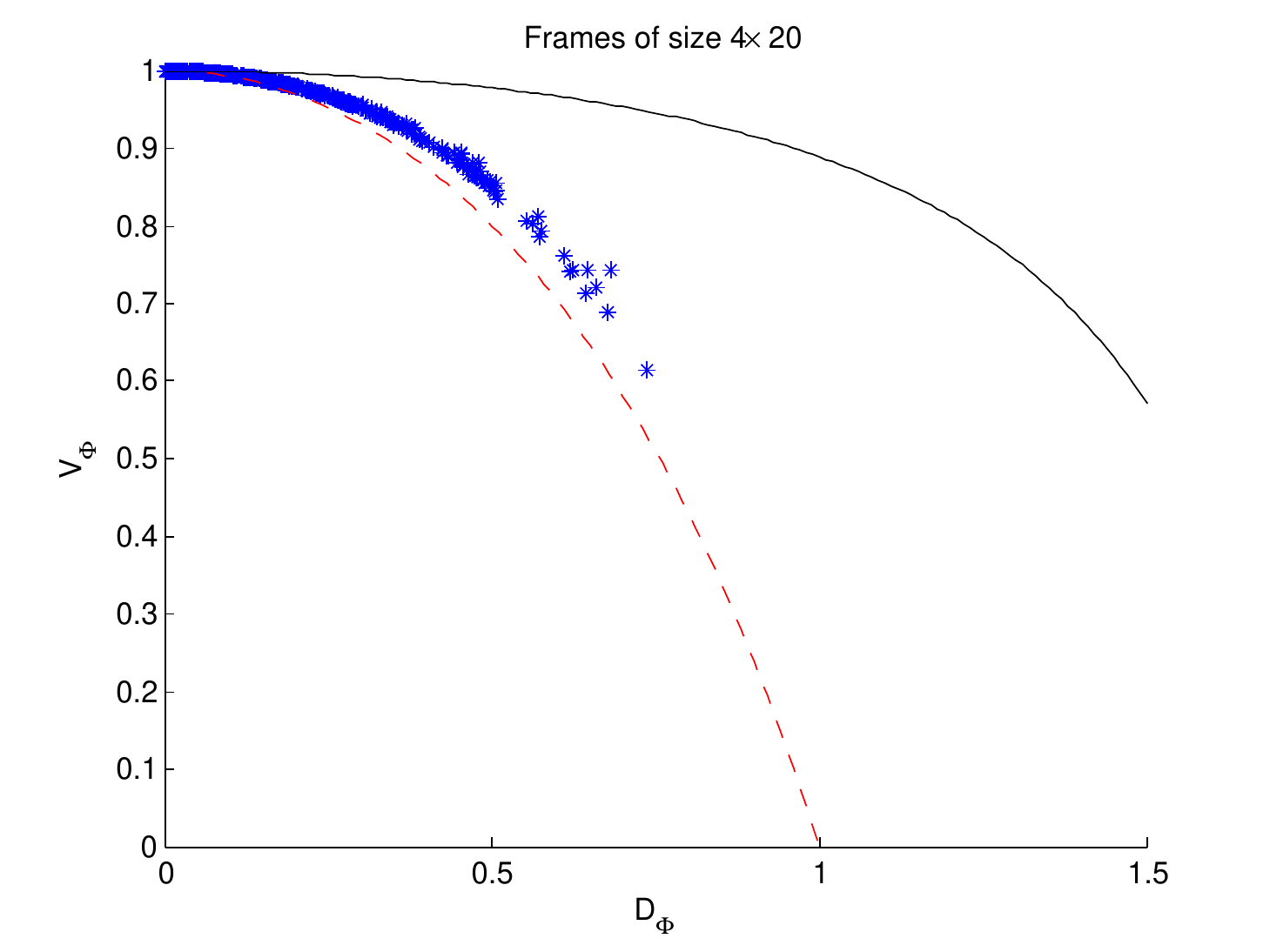}
   }
\caption{\small Relation between $V_{\Phi}$ and $D_{\Phi}$ for $\Phi\in \mathcal{F}_u(M,4)$ with $M = 6,11,15,20$. The solid line indicates the upper bound in \eqref{equ:VD}, while the  dash line indicates the lower bound. }
\label{fig:VD}
\end{figure}

This suggests that the two measures of scalability, the distance between $D_{\Phi}$ and $0$ and the distance between $V_{\Phi}$ and $1$, though closely related, are  indeed different in the sense that there is no one-to-one correspondence between them. An advantage of using $D_\Phi$ to measure scalability lies in the fact  it is more naturally related to the notion of $m-$scalability (defined in \cite{kop13c})  and is more efficient to compute. By contrast, $V_\Phi$ is a more intuitive measure of scalability from a geometric point of view.

\subsection{Comparison of the Measures $D_\Phi$ and $V_\Phi$ with $d_\Phi$}
The distance of a frame $\Phi\in\Fc_u(M,N)$ to the set of scalable frames is the most intuitive and natural measure of scalability. The next theorem shows that the practically more accessible measures $D_\Phi$ and $V_\Phi$ are equivalent to $d_\Phi$ in the sense that $d_\Phi$ tends to zero if and only if the same holds for $D_\Phi$ or $1 - V_\Phi$.

\begin{theorem}\label{thm:comp}
Let $\Phi\in\Fc_u(M,N)$ and assume that $d_{\Phi}<1$. Then with $K := \min\{M,\tfrac{N(N+1)}{2}\}$ and $\omega := D_\Phi + \sqrt{K}$ we have
\begin{equation}\label{equ:Dd}
\frac{D_\Phi}{\omega + \sqrt{\omega^2 - D_\Phi^2}}
\,\leq\, d_\Phi \,\leq\,
\sqrt{KN\left(1 - V_\Phi^{2/N}\right)}.
\end{equation}
Consequently, with the help of Theorem {\rm\ref{thm_VD}}, we can bound $d_{\Phi}$  below and above by  expressions of $D_{\Phi}$ or expressions of $V_{\Phi}$.
\end{theorem}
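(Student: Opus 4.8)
The plan is to prove the two inequalities in \eqref{equ:Dd} separately: the left one by controlling $D_\Phi$ from above in terms of $d_\Phi$, and the right one by exhibiting an explicit scalable frame close to $\Phi$. Both halves exploit that the relevant scaling data can be supported on at most $K$ indices.

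For the lower bound I would start from a minimizer $\hat\Phi=\{\hat\vphi_i\}$ of \eqref{eq:dist}, which exists by Proposition \ref{p:closed} since $d_\Phi<1$. Lemma \ref{d}(i) gives $\|\vphi_i-\hat\vphi_i\|_2^2=1-\|\hat\vphi_i\|_2^2$, and as these sum to $d_\Phi^2$ each term is at most $d_\Phi^2$, so $\|\hat\vphi_i\|_2^2\ge 1-d_\Phi^2$ for every $i$. Writing $\hat\Phi C\hat\Phi^T=I$ with $C=\diag(c_i)\ge0$, I would (via Remark \ref{rem:activerho}) choose $C$ with support $S$ of size $|S|\le K$; from $c_i\hat\vphi_i\hat\vphi_i^T\preceq I$ one gets $c_i\le\|\hat\vphi_i\|_2^{-2}\le(1-d_\Phi^2)^{-1}$, hence $\|C\|_{\mathrm{op}}\le(1-d_\Phi^2)^{-1}$. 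Since $C$ is feasible in \eqref{equ:proj}, $D_\Phi\le\|\Phi C\Phi^T-I\|_F$, and using $\hat\Phi C\hat\Phi^T=I$ I telescope $\Phi C\Phi^T-I=\hat\Phi C\Delta^T+\Delta C\Phi^T$ with $\Delta:=\Phi-\hat\Phi$. Bounding each summand by submultiplicativity of $\|\cdot\|_F$ against $\|\cdot\|_{\mathrm{op}}$, together with $\|\Phi_S\|_{\mathrm{op}},\|\hat\Phi_S\|_{\mathrm{op}}\le\sqrt K$ (at most $K$ columns of norm $\le1$) and $\|\Delta\|_F=d_\Phi$, yields $D_\Phi\le\frac{2\sqrt K\,d_\Phi}{1-d_\Phi^2}$. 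A routine rearrangement—equivalently, verifying $D_\Phi(1+d_\Phi^2)\le 2\omega d_\Phi$ and invoking monotonicity of $x\mapsto\frac{2\omega x}{1+x^2}$ on $[0,1]$—turns this into the claimed lower bound.

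For the upper bound, let $E(X)$ be the minimal ellipsoid of $\Phi$, with $X^{-1}=\sum_i\rho_i\vphi_i\vphi_i^T$ having at most $K$ positive $\rho_i$ (Corollary \ref{cor:fjframes} and Remark \ref{rem:activerho}); at these contact points $\langle X\vphi_i,\vphi_i\rangle=1$, so $\|X^{1/2}\vphi_i\|_2=1$. I would define a competitor $\Psi$ by $\psi_i:=X^{1/2}\vphi_i$ at contact points and $\psi_i:=\vphi_i$ otherwise. This frame is unit norm, and the weights $\rho_i$ witness $\sum_i\rho_i\psi_i\psi_i^T=X^{1/2}X^{-1}X^{1/2}=I$, so $\Psi\in\sca(M,N)$. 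Hence
\[
d_\Phi^2\le\|\Phi-\Psi\|_F^2=\sum_{\rho_i>0}\|\vphi_i-X^{1/2}\vphi_i\|_2^2=\sum_{\rho_i>0}2\bigl(1-\langle X^{1/2}\vphi_i,\vphi_i\rangle\bigr),
\]
a sum of at most $K$ terms. It then suffices to bound each displacement by $N(1-V_\Phi^{2/N})$, using that the eigenvalues $\lambda_j$ of $X^{-1}$ satisfy $\sum_j\lambda_j=N$ and $\prod_j\lambda_j^{1/N}=V_\Phi^{2/N}$ by \eqref{equ:vv} and \eqref{equ:xin}.

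The main obstacle is precisely this per--contact-point estimate. Expanding $\vphi_i$ in an eigenbasis of $X$, the contact condition becomes $\sum_j c_{ij}^2\lambda_j^{-1}=1$ with $\sum_j c_{ij}^2=1$, so minimizing $\langle X^{1/2}\vphi_i,\vphi_i\rangle=\sum_j c_{ij}^2\lambda_j^{-1/2}$ is a two-constraint linear program whose optimum is supported on the extreme eigenvalues $\lambda_{\min},\lambda_{\max}$; one must then dominate the resulting two-eigenvalue expression by $N(1-V_\Phi^{2/N})$ through the arithmetic--geometric mean inequalities \eqref{equ:ine}--\eqref{equ:prod}. Care is essential here, since the crude estimate $\|\vphi_i-X^{1/2}\vphi_i\|_2^2\le\|I-X^{1/2}\|_F^2$ fails for the intended conclusion when $X^{-1}$ has small eigenvalues: it is exactly the contact constraint that suppresses the components of $\vphi_i$ along the flat directions of the ellipsoid and keeps each displacement bounded. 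Summing the $\le K$ bounds gives $d_\Phi^2\le KN(1-V_\Phi^{2/N})$, and combining the two halves with Theorem \ref{thm_VD} produces the asserted two-sided control of $d_\Phi$.
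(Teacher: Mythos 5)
Your treatment of the left-hand inequality in \eqref{equ:Dd} is correct and is essentially the paper's own argument: take a minimizer $\hat\Phi$ of \eqref{eq:dist}, bound the scaling coefficients of $\hat\Phi$ by $\|\hat\vphi_i\|_2^{-2}$, telescope $\Phi C\Phi^T-\hat\Phi C\hat\Phi^T$ using the support bound from Remark \ref{rem:activerho}, and solve the resulting quadratic inequality in $d_\Phi$. Your intermediate estimate $D_\Phi\le 2\sqrt{K}\,d_\Phi/(1-d_\Phi^2)$ is in fact marginally sharper than the paper's $2\sqrt{K}\,d_\Phi/(1-d_\Phi)^2$ from \eqref{for later use}, and your rearrangement into the stated lower bound (via $D_\Phi(1+d_\Phi^2)\le 2\omega d_\Phi$ and monotonicity) is valid.

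The right-hand inequality, however, has a genuine gap, and you have located it yourself: the per-contact-point estimate $2\bigl(1-\langle X^{1/2}\vphi_i,\vphi_i\rangle\bigr)\le N\bigl(1-V_\Phi^{2/N}\bigr)$ is never proved. The linear-programming reduction to the two extreme eigenvalues is fine; writing $\Lambda=\lambda_{\max}$, $\mu=\lambda_{\min}$, the quantity you must control is $2(\sqrt{\Lambda}-1)(1-\sqrt{\mu})/(\sqrt{\Lambda}+\sqrt{\mu})$, and it has to be dominated by $N\bigl(1-\prod_j\lambda_j^{1/N}\bigr)$ under the sole constraint $\sum_j\lambda_j=N$. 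This does \emph{not} follow from \eqref{equ:ine}--\eqref{equ:prod}: it requires a quantitative AM--GM \emph{deficit} bound relating the two extreme eigenvalues to the geometric mean of all $N$ of them, and the natural such bounds (Cartwright--Field type) degenerate as $\mu\to 0$, forcing a case analysis that your sketch does not supply. Numerically the inequality appears true, but as written your argument does not establish it. The missing idea in the paper is a different competitor: instead of $X^{1/2}\vphi_i$ one takes $\tilde\vphi_i=V_\Phi^{1/N}X^{1/2}\vphi_i$ on the contact set $J$ (this is \eqref{equ:apprscale}); it is still scalable (rescale the weights $\rho_i$ by $V_\Phi^{-2/N}$), and now the crude operator-norm bound suffices:
\begin{align*}
\|\Phi-\widetilde\Phi\|_F^2
&\le K\,\bigl\|X^{-1/2}-V_\Phi^{1/N}I\bigr\|_F^2
= K\sum_{j=1}^N\bigl(\lambda_j^{1/2}-V_\Phi^{1/N}\bigr)^2\\
&= KN\Bigl(1+V_\Phi^{2/N}-2V_\Phi^{1/N}\tfrac{1}{N}\sum_{j=1}^N\lambda_j^{1/2}\Bigr)
\le KN\bigl(1-V_\Phi^{2/N}\bigr),
\end{align*}
the last step by $\tfrac{1}{N}\sum_j\lambda_j^{1/2}\ge\prod_j\lambda_j^{1/(2N)}=V_\Phi^{1/N}$. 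With your choice of scaling constant $1$ in place of $V_\Phi^{1/N}$, the same crude computation only yields $2KN\bigl(1-V_\Phi^{1/N}\bigr)$, which is weaker than $KN\bigl(1-V_\Phi^{2/N}\bigr)$; that is exactly why you were forced into the unproven contact-point analysis. Inserting the factor $V_\Phi^{1/N}$ repairs your proof with no other changes.
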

\begin{proof}
Following the same notation as in the proof of Theorem \ref{thm_VD}, let $\lambda_1,\ldots,\lambda_N$ be the eigenvalues of $X^{-1} = \sum_{i=1}^M\rho_i\vphi_i\vphi_i^T$. Furthermore, let $J = \{i : \rho_i > 0\}$. By Remark \ref{rem:activerho}, $|J|\leq K$. Define a frame $\widetilde{\Phi} = \{\tilde\varphi_i\}_{i=1}^M$ by
\begin{equation}\label{equ:apprscale}
\tilde\varphi_i :=
\begin{cases}
V_\Phi^{1/N}X^{1/2}\vphi_i &\text{if }i\in J\\
\vphi_i        &\text{if }i\notin J.
\end{cases}
\end{equation}
Note that $\widetilde{\Phi}$ is scalable, and, moreover, $\|X^{1/2}\varphi_i\|_2 = 1$ for $i\in J$ by \eqref{equ:rhob}. So,

\begin{align}
\|\Phi - \widetilde\Phi\|_F^2\notag
&=\notag \sum_{i\in J}\|\vphi_i - V_\Phi^{1/N}X^{1/2}\vphi_i\|_2^2 = \sum_{i\in J}\|(X^{-1/2} - V_\Phi^{1/N}I)X^{1/2}\vphi_i\|_2^2\\
&\le \notag\|X^{-1/2} - V_\Phi^{1/N}I\|_F^2\sum_{i\in J}\|X^{1/2}\vphi_i\|_2^2\le K\sum_{j=1}^N\left(\lambda_j^{1/2} - V_\Phi^{1/N}\right)^2\\
&=\notag K\left(N + V_\Phi^{2/N}N - 2V_\Phi^{1/N}\sum_{j=1}^N\lambda_j^{1/2}\right)\\
&=\notag KN\left(1 + V_\Phi^{2/N} - 2V_\Phi^{1/N}\frac 1 N\sum_{j=1}^N\lambda_j^{1/2}\right)\\
&\le\label{middle} KN\left(1 + V_\Phi^{2/N} - 2V_\Phi^{2/N}\right) = KN\left(1 - V_\Phi^{2/N}\right)^2.
\end{align}
As $d_\Phi\le\|\Phi - \widetilde\Phi\|_F$, this proves the right-hand side of \eqref{equ:Dd}.

Let $\hat\Phi$ be a minimizer of \eqref{eq:dist} (which exists due to Proposition \ref{p:closed} and has non-zero columns by Remark \ref{rem:non-zero}). Since $\hat\Phi$ is scalable, there exists a non-negative diagonal matrix $S = \text{diag}(s_i)_{i=1}^M$ such that $\hat\Phi S\hat\Phi^T = I$. Again by Remark \ref{rem:activerho}, we may assume that at most $K$ of the $s_i$ are non-zero.
We then have
$$
\Phi S\Phi^T - I = \Phi S\Phi^T - \hat\Phi S\hat\Phi^T = \sum_{i=1}^Ms_i\left[\vphi_i(\vphi_i^T - \hat\vphi_i^T) + (\vphi_i - \hat\vphi_i)\hat\vphi_i^T\right],
$$
and therefore, as $\|\hat\vphi_i\|_2\le 1$ (see Lemma \ref{d} (ii)),
\begin{align*}
D_\Phi
&\le\|\Phi S\Phi^T - I\|_F\le\sum_{i=1}^Ms_i\big(\|\vphi_i - \hat\vphi_i\|_2 + \|\vphi_i - \hat\vphi_i\|_2\|\hat\vphi_i\|_2\big)\\
&\le 2\sum_{i=1}^Ms_i\|\vphi_i - \hat\vphi_i\|_2\le 2\left(\sum_{i=1}^Ms_i^2\right)^{1/2}\left(\sum_{i=1}^M\|\vphi_i - \hat\vphi_i\|_2^2\right)^{1/2}\\
&\le 2\left(K\max_i s_i^2\right)^{1/2}\|\Phi - \hat\Phi\|_F = 2\sqrt{K}\left(\max_i s_i\right)d_\Phi
\end{align*}
Now, for each $i\in\{1,\ldots,M\}$ we have
$$
s_i
= \frac{\hat{\varphi_i}^Ts_i\hat{\varphi_i}\hat{\varphi_i}^T\hat{\varphi_i}}{\|\hat\varphi_i\|_2^4}
\leq \sum_{k=1}^M\frac{\hat\varphi_i^Ts_k\hat\varphi_k\hat\varphi_k^T\hat\varphi_i}{\|\hat\varphi_i\|_2^4}
= \frac{\hat\varphi_i^T\hat\Phi S\hat\Phi^T\hat\varphi_i}{ \|\hat\varphi_i\|_2^4}
= \frac{1}{ \|\hat\varphi_i\|_2^2}\leq\frac{1}{(1-d_{\Phi})^2},
$$
where the last inequality follows from the triangle inequality. This gives
\begin{equation}\label{for later use}
D_\Phi\,\le\,\frac{2\sqrt{K}d_\Phi}{(1-d_\Phi)^2}.
\end{equation}
Solving for $d_\Phi$ in the last inequality leads to the left hand side of \eqref{equ:Dd}.

\end{proof}

We conclude this section by a theorem on approximating unit norm frames by scalable frames.

\begin{theorem}[Approximation by  scalable frames]\label{thm:app}
Let $\Phi\in\Fc_u(M,N)$ and assume that $d_\Phi\le\frac 1 2(1+\sqrt{K})^{-1}$. Let $\hat\Phi$ be a minimizer of \eqref{eq:dist}, and let $E_\Phi = E(X)$ be the minimal ellipsoid of $\Phi$, where $X^{-1} = \sum_{i=1}^{M}\rho_i\varphi_i\varphi_i^T$. Then the scalable  frame $\widetilde{\Phi} = \{\tilde\varphi_i\}_{i=1}^M$ defined in \eqref{equ:apprscale} is a good approximation to $\Phi$ in the following sense:
\begin{equation}\label{equ:relative}
\|\widetilde{\Phi}-\Phi\|_F
\leq
\sqrt{KN}\left(1 - \sqrt{N\frac{(1 - d_{\Phi})^4 - 4Kd_{\Phi}^2}{N(1 - d_{\Phi})^4 - 4Kd_{\Phi}^2}}\right)^{1/2} = K\sqrt{N}O(d_{\Phi}),
\end{equation}
where $K = \min\{M,\tfrac{N(N+1)}{2}\}$.
\end{theorem}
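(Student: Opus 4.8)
The plan is to avoid building a new approximant and instead reuse the scalable frame $\widetilde\Phi$ of \eqref{equ:apprscale}, whose distance to $\Phi$ was already estimated inside the proof of Theorem \ref{thm:comp}: the chain of inequalities culminating in \eqref{middle} (together with the $\mathrm{AM}$--$\mathrm{GM}$ step $\tfrac1N\sum_j\lambda_j^{1/2}\ge V_\Phi^{1/N}$ that produced the upper bound in \eqref{equ:Dd}) shows that $\widetilde\Phi$ is scalable and obeys
$$
\|\widetilde\Phi - \Phi\|_F \le \sqrt{KN\bigl(1 - V_\Phi^{2/N}\bigr)}.
$$
Thus the whole problem reduces to bounding $V_\Phi^{2/N}$ from below by the explicit quantity sitting under the inner radical of \eqref{equ:relative}: once I establish $V_\Phi^{2/N}\ge A$ for that quantity $A$, monotonicity of the square root gives the asserted estimate at once.

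First I would pass from $V_\Phi$ to $D_\Phi$. The leftmost inequality of Theorem \ref{thm_VD} yields $V_\Phi^{4/N}\ge\tfrac{N(1-D_\Phi^2)}{N-D_\Phi^2}$, hence $V_\Phi^{2/N}\ge\bigl(\tfrac{N(1-D_\Phi^2)}{N-D_\Phi^2}\bigr)^{1/2}$, valid once the hypothesis forces $D_\Phi<1$. Next I would pass from $D_\Phi$ to $d_\Phi$ via inequality \eqref{for later use}, namely $D_\Phi\le\tfrac{2\sqrt K\,d_\Phi}{(1-d_\Phi)^2}$, so $D_\Phi^2\le\tfrac{4Kd_\Phi^2}{(1-d_\Phi)^4}$. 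The one subtlety is that these two steps run in opposite directions, so I must check that $t\mapsto\tfrac{N(1-t)}{N-t}$ is nonincreasing on $[0,1)$; its derivative equals $N\tfrac{1-N}{(N-t)^2}\le 0$, which confirms this. Substituting the upper bound on $D_\Phi^2$ into this decreasing function and clearing the common factor $(1-d_\Phi)^4$ produces exactly
$$
V_\Phi^{2/N}\ge\left(N\,\frac{(1 - d_\Phi)^4 - 4Kd_\Phi^2}{N(1 - d_\Phi)^4 - 4Kd_\Phi^2}\right)^{1/2} =: A,
$$
and combining $V_\Phi^{2/N}\ge A$ with the displayed bound on $\|\widetilde\Phi - \Phi\|_F$ gives the middle expression of \eqref{equ:relative}.

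The hypothesis $d_\Phi\le\tfrac12(1+\sqrt K)^{-1}$ serves precisely to keep every radical well defined, and checking this is where I expect the main (though routine) obstacle to lie. Writing the requirement $D_\Phi\le 1$ as $2\sqrt K\,d_\Phi\le(1-d_\Phi)^2 = 1 - 2d_\Phi + d_\Phi^2$, it suffices that $2(\sqrt K+1)d_\Phi\le 1 + d_\Phi^2$, for which $d_\Phi\le\tfrac12(1+\sqrt K)^{-1}$ is more than enough; the same inequality gives $(1-d_\Phi)^4 - 4Kd_\Phi^2\ge 0$, so the numerator and denominator of $A$ are nonnegative and $D_\Phi<1$, licensing Theorem \ref{thm_VD}. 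The delicate point is to confirm that the denominator $N(1-d_\Phi)^4 - 4Kd_\Phi^2$ stays strictly positive (it dominates the numerator since $N\ge 1$), so that $A$ is a genuine real number in $[0,1]$ and $1-A\ge 0$.

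Finally, for the asymptotic $K\sqrt N\,O(d_\Phi)$, I would Taylor expand $A$ about $d_\Phi=0$. A short computation gives $A^2 = 1 - \tfrac{4K(N-1)}{N}d_\Phi^2 + O(d_\Phi^3)$, whence $1 - A = \tfrac{2K(N-1)}{N}d_\Phi^2 + O(d_\Phi^3)$ and therefore $(1-A)^{1/2} = O(d_\Phi)$. Multiplying by $\sqrt{KN}$ yields $\sqrt{KN}\,(1-A)^{1/2} = K\sqrt{2(N-1)}\,d_\Phi\,(1+o(1)) = K\sqrt N\,O(d_\Phi)$, which is the claimed order of approximation.
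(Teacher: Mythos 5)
Your proposal is correct and follows essentially the same route as the paper's proof: it reuses $\widetilde\Phi$ and the bound \eqref{middle}, lower-bounds $V_\Phi^{2/N}$ via the leftmost inequality of \eqref{equ:VD}, substitutes \eqref{for later use} using the monotonicity of $t\mapsto N(1-t)/(N-t)$, and uses the hypothesis on $d_\Phi$ exactly to guarantee $D_\Phi<1$ and nonnegative radicands. The only difference is that you spell out the monotonicity check, the verification of the hypothesis, and the Taylor expansion behind the $K\sqrt{N}\,O(d_\Phi)$ claim, all of which the paper leaves implicit.
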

\begin{proof}
We extend the estimate \eqref{middle} with the help of the leftmost inequality of \eqref{equ:VD}:
\begin{equation}\label{last}
\|\Phi - \widetilde\Phi\|_F^2\,\le\,KN\left(1 - \sqrt{N\frac{1-D_\Phi^2}{N-D_\Phi^2}}\right).
\end{equation}
Since the right-hand side of \eqref{last} is an increasing function of $D_{\Phi}$ on $[0,1]$, we substitute \eqref{for later use} into \eqref{last} and obtain the left hand side of \eqref{equ:relative}, where we need the requirement on $d_{\Phi}$ so that $D_{\Phi}<1$.
\end{proof}

\vspace{.5cm}
\section{Probability of having scalable frames}\label{sec:prob}
This section aims to estimate the probability $P_{M,N}$ of unit norm frames to be scalable when the frame vectors are drawn independently and uniformly from the unit sphere $\Sb^{N-1} \subset \R^N$. This is in a sense equivalent to estimating the ``size'' of $\sca(M,N)$ in $\fr_u(M,N)$.

\smallskip
The basic idea is to use the characterization of scalability in terms of the minimum volume  ellipsoids through John's theorem, see Theorem~\ref{pro:iff}. From this geometric point of view, we derive new and relatively simple conditions for  scalability and non-scalability (Theorem \ref{pro:ns}). These conditions are the key tools we use to  estimate the probability $P_{M,N}.$

\subsection{Necessary and Sufficient Conditions for Scalability}\label{subsec:Necsuffconditions}
The following theorem plays a crucial role in the proof of our main theorem on the probability of having scalable frames in Subsection \ref{ss:prob}. However, it is also of independent interest.

\begin{theorem}\label{pro:ns}
Let $\Phi\in\Fc_{u}(M,N)$. Then the following hold:
\begin{enumerate}
\item[(a)] {\rm (}A necessary condition for scalability\,{\rm )} If $\Phi$ is scalable, then
\begin{equation}\label{e:nec}
\min_{\|d\|_2=1}\max_i |\<d,\varphi_i\>|\geq \frac{1}{\sqrt{N}}.
\end{equation}
\item[(b)] {\rm (}A sufficient condition for scalability\,{\rm )} If
\begin{equation}\label{e:suff}
\min_{\|d\|_2=1}\max_i |\<d,\varphi_i\>|\geq \sqrt{\frac{N-1}{N}},
\end{equation}
then $\Phi$ is scalable.
\end{enumerate}
\end{theorem}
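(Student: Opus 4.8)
The plan is to prove the two implications separately: (a) is essentially a one-line computation, while (b) is the substantial half, which I would attack by contraposition through the minimal ellipsoid.

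For (a), suppose $\Phi$ is scalable, so there are $c_i\ge0$ with $\sum_{i}c_i\varphi_i\varphi_i^T=I$. Taking the trace and using $\|\varphi_i\|_2=1$ gives $\sum_i c_i=N$. Evaluating the identity on an arbitrary unit vector $d$,
\[
1=\langle d,d\rangle=\sum_i c_i\langle d,\varphi_i\rangle^2\le\Big(\max_i\langle d,\varphi_i\rangle^2\Big)\sum_i c_i=N\max_i\langle d,\varphi_i\rangle^2,
\]
so $\max_i|\langle d,\varphi_i\rangle|\ge1/\sqrt N$ for every unit $d$; minimizing over $d$ yields \eqref{e:nec}.

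For (b) I would argue the contrapositive: assuming $\Phi$ is \emph{not} scalable, I will exhibit a single unit direction $d$ with $\max_i|\langle d,\varphi_i\rangle|<\sqrt{(N-1)/N}$. By Theorem \ref{pro:iff}, non-scalability means the minimal ellipsoid $E(X)$ has $X\ne I$. Let $\mu_1\ge\cdots\ge\mu_N>0$ be the eigenvalues of $X$ with orthonormal eigenvectors $f_1,\dots,f_N$. From \eqref{equ:xin} we have $\sum_k\mu_k^{-1}=\tr(X^{-1})=N$; since $X\ne I$ the $\mu_k$ are not all equal to $1$, and the constraint $\sum_k\mu_k^{-1}=N$ then forces $\mu_{\max}=\mu_1>1>\mu_N=\mu_{\min}$ (if all $\mu_k\ge1$ the sum would be $\le N$ with equality only at $X=I$, and symmetrically). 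I then take $d=f_1$. Writing $\varphi_i=\sum_k b_{ik}f_k$, the unit-norm condition gives $\sum_k b_{ik}^2=1$ and the containment \eqref{equ:inE} gives $\sum_k\mu_k b_{ik}^2\le1$; subtracting and isolating the top coordinate,
\[
(\mu_{\max}-1)\,b_{i1}^2\le\sum_{k\ge2}(1-\mu_k)b_{ik}^2\le(1-\mu_{\min})(1-b_{i1}^2),
\]
which rearranges to $\langle d,\varphi_i\rangle^2=b_{i1}^2\le(1-\mu_{\min})/(\mu_{\max}-\mu_{\min})$ for every $i$. Hence $\min_{\|d\|=1}\max_i\langle d,\varphi_i\rangle^2\le(1-\mu_{\min})/(\mu_{\max}-\mu_{\min})$.

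It then remains to check that this bound is $<(N-1)/N$, which rearranges to the purely scalar inequality $(N-1)\mu_{\max}+\mu_{\min}>N$. I would deduce this from the single estimate $\mu_{\min}^{-1}+(N-1)\mu_{\max}^{-1}\le\sum_k\mu_k^{-1}=N$ (valid because every $\mu_k\le\mu_{\max}$) together with Cauchy--Schwarz,
\[
\Big(\tfrac{1}{\mu_{\min}}+\tfrac{N-1}{\mu_{\max}}\Big)\big(\mu_{\min}+(N-1)\mu_{\max}\big)\ge\big(1+(N-1)\big)^2=N^2,
\]
so that $\mu_{\min}+(N-1)\mu_{\max}\ge N^2/N=N$, with strict inequality since $\mu_{\min}\ne\mu_{\max}$. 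This contradicts \eqref{e:suff} and establishes (b). I expect the main obstacle to be the choice of test direction and the sharpness of its estimate: the naive bound $\min_d\max_i\langle d,\varphi_i\rangle^2\le\lambda_{\min}(X^{-1})$, obtained from \eqref{equ:inE} alone, is too weak near the scalable boundary (it can remain close to $1$ while $\Phi$ is barely non-scalable), and one must also invoke the unit-norm constraint to produce the denominator $\mu_{\max}-\mu_{\min}$; the remaining scalar inequality $(N-1)\mu_{\max}+\mu_{\min}>N$ is the second delicate point, which the Cauchy--Schwarz step resolves cleanly.
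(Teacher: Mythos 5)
Your proposal is correct, and it splits into two halves relative to the paper. Part (b) is essentially the paper's own argument in eigenvalue notation: the paper also argues by contradiction through the minimal ellipsoid, diagonalizes $X^{-1/2}=\diag(a_i)$, tests against the direction of the smallest semi-axis, combines the unit-norm constraint with the containment $\<X\varphi_i,\varphi_i\>\le 1$ to obtain exactly your bound $\rho=(1-\mu_{\min})/(\mu_{\max}-\mu_{\min})$ on the squared coordinate, and reduces to the same scalar inequality $(N-1)\mu_{\max}+\mu_{\min}>N$. The only real difference is how that inequality is settled: the paper bounds $\mu_{\min}+(N-1)\mu_{\max}\ge\sum_i\mu_i$ and applies the strict harmonic--arithmetic mean inequality \eqref{equ:ine} to all $N$ eigenvalues together with $\tr(X^{-1})=N$ from \eqref{equ:xin}, whereas you bound $\mu_{\min}^{-1}+(N-1)\mu_{\max}^{-1}\le N$ and finish with a two-term Cauchy--Schwarz with strictness from $\mu_{\min}\ne\mu_{\max}$; these are the same mean inequality deployed in opposite directions, and both are airtight. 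Part (a), by contrast, is genuinely different from and more elementary than the paper's proof: the paper invokes the convex-geometric fact that the minimal ellipsoid of an origin-symmetric convex body $K$ satisfies $\frac{1}{\sqrt N}E_K\subset K$ (see \cite[Theorem 12.11]{og10}) together with the extreme-point principle for convex functions on $\co(\Phi_{\rm Sym})$, while your argument --- take the trace of $\sum_i c_i\varphi_i\varphi_i^T=I$ to get $\sum_i c_i=N$, then evaluate $1=\sum_i c_i\<d,\varphi_i\>^2\le N\max_i\<d,\varphi_i\>^2$ --- is completely self-contained and needs no ellipsoid machinery at all; in effect it re-proves the shrinking-containment fact in the only case needed. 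Both routes are valid; yours makes (a) independent of the convex-geometry background of Section \ref{sec:v}, while the paper's presentation has the advantage of exhibiting (a) and (b) as two faces of the same minimal-ellipsoid picture.
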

\begin{proof}
(a). We will use the following fact: if $E_K$ is the minimal ellipsoid of a convex body $K\subset\R^N$ which is symmetric about the origin, then $\frac{1}{\sqrt{N}}E_K\subset K$, see \cite[Theorem 12.11]{og10}.
If $\Phi$ is scalable, then the unit ball is the minimal ellipsoid of the convex hull $\co(\Phi_{\rm Sym})$ of $\Phi_{\rm Sym}$. Therefore, $\frac{1}{\sqrt{N}}B\subset\co(\Phi_{\rm Sym})$. And as a continuous convex function on a compact convex set attains its maximum at an extreme point of this set (see, e.g., \cite[Theorem 3.4.7]{np06}), we conclude that for each $d\in \Sb^{N-1}$ we have
$$
\frac 1 {\sqrt{N}} = \max_{x\in\frac{1}{\sqrt{N}}B}|\ip{d}{x}|\leq\max_{x\in\co(\Phi_{\rm Sym})}|\ip{d}{x}|\le\max_i|\ip{d}{\varphi_i}|.
$$

(b). Let $E_{\Phi}=E(X)$ be the minimal ellipsoid of $\Phi$. With a unitary transformation, we can assume $X^{-1/2}=\text{diag}(a_i)_{i=1}^N$. Towards a contradiction, suppose that \eqref{e:suff} holds, but that $\Phi$ is not scalable. Then, by Theorem~\ref{pro:iff}, $a_1\leq a_2\leq\ldots\leq a_N$ with $a_1<a_N$. Take any frame vector $\varphi=(x_1, x_2,\dots,x_N)^T$ from $\Phi$. It satisfies
$\sum_{i=1}^N\frac{x_i^2}{a_i^2} = \<X\vphi,\vphi\>\leq1$ and $\sum_{i=1}^Nx_i^2=1$, which implies
$$
\sum_{i=1}^{N-1}x_i^2\left(\frac{1}{a_i^2}-\frac{1}{a_N^2}\right)\leq1-\frac{1}{a_N^2}.
$$
Hence, setting $\rho=(1-\frac{1}{a_N^2})/(\frac{1}{a_1^2}-\frac{1}{a_N^2})$, we have $x_1^2\le\rho$. We claim that
\begin{equation}\label{equ:rho}
\rho<\frac{N-1}{N}.
\end{equation}
Then we choose $d = (1, 0, \dots,0)^T$ and find that $|\<d,\vphi\>| = |x_1| < \sqrt{\frac{N-1}{N}}$ for each $\vphi\in\Phi$ which contradicts the assumption.

Proving \eqref{equ:rho} is equivalent to proving $\frac{1}{a_N^2}+\frac{N-1}{a_1^2}>N$, which is true because
$$
\frac{1}{a_N^2}+\frac{N-1}{a_1^2}\geq \sum_{i=1}^N\frac{1}{a_i^2} > \frac{N^2}{\sum_{i=1}^N a_i^2}=N,
$$
where we have used \eqref{equ:xin} and \eqref{equ:ine} (in which equality holds if and only if $a_1 = \ldots = a_N$).
\end{proof}

\begin{remark}\label{rem:N2}
(a) Another necessary condition for scalability was proved in \cite[Theorem 3.1]{cklmns12}. We wish to point out that this necessary condition is unrelated to the one given in part (a) of the previous theorem in the sense that neither of these conditions implies the other.

\smallskip
(b) When the dimension $N=2$, Theorem~\ref{pro:ns} gives  a necessary and sufficient condition for a frame to be scalable. This condition can be easily interpreted in terms of cones as already mentioned before: $\{\varphi_i\}_{i=1}^M$ is a scalable frame for $\R^2$ if and only if every double cone with apex at origin and containing $\Phi_{\rm Sym}$ has an apex angle greater than or equal to $\pi/2$.

\smallskip
(c) For a general $N$, the gap between these two conditions is large. However, this gap cannot be improved. Theorem~\ref{pro:ns}(a) is tight in the sense that we cannot replace $1/\sqrt{N}$ by a bigger constant. This is because an orthonormal basis reaches this constant. The sufficient condition is also optimal in the sense that $\sqrt{(N-1)/N}$ cannot be replaced by a smaller number. This requires some more analysis as shown below.
\end{remark}

\begin{proposition}\label{pro:tight}
For any small $\varepsilon>0$ and any $N\in\N$, there exists a unit norm frame $\Phi$ for $\R^N$, such that
$$
\min_{\|d\|_2=1}\max_i|\<d,\varphi_i\>|\geq\sqrt{\frac{N-1}{N}}-2\varepsilon,
$$
but $\Phi$ is not scalable.
\end{proposition}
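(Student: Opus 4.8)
The plan is to exhibit, for given $\varepsilon>0$ and $N$, an explicit non-scalable unit norm frame whose vectors all cluster in two small caps around the poles $\pm e_N$, and to show that the quantity $\min_{\|d\|_2=1}\max_i|\<d,\varphi_i\>|$ stays just below $\sqrt{(N-1)/N}$. The guiding observation is that $\max_i|\<d,\varphi_i\>| = \max_{x\in\co(\Phi_{\rm Sym})}\<d,x\>$ is the support function of $\co(\Phi_{\rm Sym})$, so the quantity of interest is exactly the radius of the largest centered ball inside $\co(\Phi_{\rm Sym})$; I want this radius close to $\sqrt{(N-1)/N}$ while the frame fails to be scalable.

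Concretely, I would fix a threshold $g$ with $1/\sqrt{N}<g$ and $\sqrt{1-g^2}\ge\sqrt{(N-1)/N}-\varepsilon$; this is possible since $\sqrt{1-g^2}$ increases to $\sqrt{(N-1)/N}$ as $g$ decreases to $1/\sqrt{N}$. Let $C_g := \{v\in\Sb^{N-1} : |\<v,e_N\>|\ge g\}$ be the union of the two polar caps, and let $\Phi$ be a finite set of unit vectors in $C_g$, closed under $v\mapsto -v$, forming an $\eta$-net of $C_g$ (every point of $C_g$ lies within Euclidean distance $\eta$ of a frame vector). Since $C_g$ spans $\R^N$, a sufficiently fine net is a frame.

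Non-scalability is the easy half. Each $\varphi_i$ satisfies $\<\varphi_i,e_N\>^2\ge g^2$, so if $\Phi$ were scalable there would exist $c_i\ge 0$ with $\sum_i c_i\varphi_i\varphi_i^T=I$. Taking the trace gives $\sum_i c_i=N$ (the vectors are normalized), while the $(N,N)$-entry gives $1=\sum_i c_i\<\varphi_i,e_N\>^2\ge g^2\sum_i c_i=g^2N$, i.e. $g\le 1/\sqrt{N}$, contradicting $g>1/\sqrt{N}$. For the lower bound on $\min_{\|d\|_2=1}\max_i|\<d,\varphi_i\>|$, I would first compute the support function of the full $C_g$. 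By rotational symmetry about $e_N$ it suffices to take $d=\sin\beta\,e_1+\cos\beta\,e_N$. An elementary optimization over $C_g$ shows that $\max_{v\in C_g}\<d,v\>=1$ when $\cos\beta\ge g$, and $\max_{v\in C_g}\<d,v\>=\sqrt{1-g^2}\,\sin\beta+g\cos\beta=\sin(\beta+\arcsin g)$ when $\cos\beta< g$; the latter is minimized at $\beta=\pi/2$, giving $\min_{\|d\|_2=1}\max_{v\in C_g}\<d,v\>=\sqrt{1-g^2}$, attained in the equatorial directions. Passing to the $\eta$-net lowers each support function by at most $\eta$, so $\min_{\|d\|_2=1}\max_i|\<d,\varphi_i\>|\ge\sqrt{1-g^2}-\eta$; choosing $\eta\le\varepsilon$ and combining with $\sqrt{1-g^2}\ge\sqrt{(N-1)/N}-\varepsilon$ yields the asserted inequality.

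I expect the only genuinely technical step to be the support-function minimization certifying that the \emph{worst} direction is equatorial, so that the inscribed radius equals $\sqrt{1-g^2}$ and not some smaller value hidden in an oblique direction; the trace argument for non-scalability and the net-approximation estimate are routine. Note also that the construction is sharp for the right reason: it is precisely the excluded equatorial band (all mass pushed to $|\<v,e_N\>|\ge g>1/\sqrt{N}$) that simultaneously forbids scalability and caps the inscribed radius at $\sqrt{1-g^2}$, matching the sufficient threshold $\sqrt{(N-1)/N}$ in the limit $g\downarrow 1/\sqrt{N}$.
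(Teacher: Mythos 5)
Your proof is correct, but it takes a genuinely different route from the paper at the key step, so a comparison is worthwhile. The underlying geometric construction is in substance the same: the paper picks the ellipsoid $E(X)$ with $X^{-1}=\diag\bigl(\tfrac{N-1-\veps}{N-1},\ldots,\tfrac{N-1-\veps}{N-1},1+\veps\bigr)$, and its intersection with the sphere, $E(X)\cap\Sb^{N-1}=\{x\in\Sb^{N-1}: x_N^2\ge\tfrac{1+\veps}{N}\}$, is exactly your pair of polar caps $C_g$ with $g=\sqrt{(1+\veps)/N}>1/\sqrt N$; both arguments then fill this cap set with a fine symmetric net so that in every direction $d$ some frame vector has inner product at least $\sqrt{1-g^2}-\eta$ (the paper exhibits an explicit witness $x_0$ for each $d$, you compute the support function of the cap set and certify that the worst direction is equatorial --- the same estimate). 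Where you genuinely diverge is the proof of non-scalability: the paper invokes its ellipsoid machinery, namely Theorem \ref{thm:cha} to realize $E(X)$ (which has trace $N$) as the minimal ellipsoid of a unit-norm frame, and then Theorem \ref{pro:iff} to conclude non-scalability because $E(X)$ is not the unit ball; you instead give a two-line elementary argument: any nonnegative solution of $\sum_i c_i\varphi_i\varphi_i^T=I$ satisfies $\sum_i c_i=N$ by taking the trace, while the $(N,N)$ entry forces $1=\sum_i c_i\langle\varphi_i,e_N\rangle^2\ge g^2\sum_i c_i=g^2N>1$, a contradiction. Your route is more self-contained (no John's theorem, no realization theorem for minimal ellipsoids) and makes the sharpness mechanism transparent --- pushing all mass into caps with $g>1/\sqrt N$ simultaneously kills scalability and caps the inscribed radius at $\sqrt{1-g^2}$; the paper's route buys an explicit handle on the minimal ellipsoid of the constructed frame (hence on $V_\Phi$) and showcases Theorem \ref{thm:cha}. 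The only point you should spell out in a final write-up is the one you flagged as routine: a sufficiently fine symmetric net of $C_g$ is indeed a frame, e.g.\ because it contains a small perturbation of a basis lying in $C_g$, and spanning is stable under small perturbations.
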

\begin{proof}
Pick an ellipsoid $E(X)$ with $X^{-1} = \text{diag}(a_1^2,a_2^2,\ldots,a_{N-1}^2,a_N^2)$, where $a_1^2 = a_2^2 = \ldots = a^2_{N-1} = \frac{N-1-\varepsilon}{N-1}$, and $a_N^2=1+\varepsilon$. By Theorem \ref{thm:cha}, there exists a (non-scalable) frame $\Phi\in\Fc_u(M,N)$ whose minimal ellipsoid is $E(X)$.

Then for any $x\in E(X)\cap \Sb^{N-1}$, we have
\[
1\geq \sum\limits_{i=1}^{N-1}\frac{x_i^2}{a_i^2}+\frac{x_N^2}{a_N^2}=\frac{(N-1)(1-x_N^2)}{N-1-\varepsilon}+\frac{x_N^2}{1+\varepsilon},
\]
which implies that
\[
x_N^2\geq \frac{1+\varepsilon}{N}.
\]
Now for any $d=(d_1, d_2, \dots, d_N)\in \Sb^{N-1}$, if $d_N^2<\frac{1+\varepsilon}{N}$, then let
$$
x_0 = \sqrt{1-\frac{1+\varepsilon}{N}}\frac{\tilde{d}}{\|\tilde{d}\|}+\text{sign}(d_N)\left(0,0,...,0,\sqrt{\frac{1+\varepsilon}{N}}\right),
$$
where $\tilde{d}=(d_1,d_2,....,d_{N-1},0)$. It is easy to verify that $x_0\in E(X)\cap \Sb^{N-1}$ and that $\langle x_0, d \rangle\geq \sqrt{\frac{N-1-\varepsilon}{N}}$. If $d_N^2\geq \frac{1+\varepsilon}{N}$, then let $x_0=d$. It is again easy to check $x_0\in E(X)\cap \Sb^{N-1}$ and $\langle x_0,d \rangle =1$. In summary, for any $d\in \Sb^{N-1}$, there exists an $x_0\in E(X)\cap\Sb^{N-1}$, such that  $\langle x_0, d \rangle\geq \sqrt{\frac{N-1-\varepsilon}{N}}$.

We add vectors from the set $E(X)\cap \Sb^{N-1}$ to $\Phi$ such that the frame vectors are dense enough  to form an $\varepsilon$-ball of $E(X)\cap \Sb^{N-1}$, i.e., for any $x\in E(X)\cap \Sb^{N-1}$, there exists a $\varphi_i\in E(X)\cap \Sb^{N-1}$, such that $\|\varphi_i-x\|_2\leq \varepsilon$. Notice this new frame has the same minimal ellipsoid. With this construction, for any $d\in\Sb^{N-1}$, we can find a frame vector $\varphi_i$ such that
$\langle \varphi_{i},d\rangle=\langle x,d\rangle+\langle \varphi_i-x,d\rangle\geq \sqrt{\frac{N-1-\varepsilon}{N}}-\varepsilon \geq \sqrt{\frac{N-1}{N}}-2\varepsilon$ provided that $\varepsilon$ is small enough.
\end{proof}

In Remark \ref{rem:N2}(b), we mentioned that \eqref{e:nec} is necessary and sufficient for scalability if $N = 2$. In the following, we shall show that the same holds if $M = N$:

\begin{theorem}\label{t:M=N}
For $\Phi\in\fr_u(N,N)$, the following statements are equivalent.
\begin{enumerate}
\item[{\rm (i)}]   $\Phi$ is scalable.
\item[{\rm (ii)}]  $\Phi$ is unitary.
\item[{\rm (iii)}] $\min_{\|d\|_2=1}\max_i |\<d,\varphi_i\>|\geq \frac{1}{\sqrt{N}}$.
\end{enumerate}
\end{theorem}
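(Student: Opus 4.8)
The plan is to prove the three equivalences by establishing the cycle (i)$\Rightarrow$(ii)$\Rightarrow$(iii)$\Rightarrow$(i), where the hard direction is (i)$\Rightarrow$(ii); the others follow quickly from results already in the excerpt. Since $M = N$, the frame $\Phi$ is a square $N\times N$ matrix with unit-norm columns. The key structural insight is that a square frame has exactly $N$ vectors, so the scaling equation \eqref{equ:scalar} forces rigidity: there is essentially no redundancy to distribute the ``mass'' needed to form a tight frame.

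For (i)$\Rightarrow$(ii), I would argue as follows. If $\Phi$ is scalable, then by \eqref{equ:scalar} there exist nonnegative scalars $c_i$, not all zero, with $\sum_{i=1}^N c_i\varphi_i\varphi_i^T = \lambda I$ for some $\lambda > 0$ (after rescaling to a tight frame rather than insisting on Parseval). Because $\Phi$ is square and forms a frame, the columns $\varphi_i$ are linearly independent, hence a basis of $\R^N$. The main obstacle is to show all $c_i$ are equal and the basis is orthonormal. I would take the trace to get $\sum_i c_i = \lambda N$, and then exploit the basis property: for each $j$, pick the dual vector $\psi_j$ satisfying $\<\psi_j,\varphi_i\> = \delta_{ij}$. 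Applying the tight-frame operator identity to $\psi_j$ and pairing against $\varphi_j$, or equivalently using that $\Phi C\Phi^T = \lambda I$ with $\Phi$ invertible gives $C = \lambda\,\Phi^{-1}\Phi^{-T} = \lambda(\Phi^T\Phi)^{-1}$, forces the off-diagonal Gram entries to vanish once one checks that $C$ is diagonal while $(\Phi^T\Phi)^{-1}$ generally is not. Concretely, $\Phi C\Phi^T = \lambda I$ rearranges to $C\Phi^T\Phi = \lambda I$, i.e.\ $\Phi^T\Phi = \lambda C^{-1}$, which is diagonal; since the diagonal of $\Phi^T\Phi$ is all ones (unit-norm columns), the Gram matrix equals a diagonal matrix with unit diagonal, hence $\Phi^T\Phi = I$, so $\Phi$ is unitary. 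This is the crux and where I expect the most care: verifying $C$ is invertible (all $c_i > 0$, which follows since a vanishing $c_i$ would drop the rank of $\sum c_i\varphi_i\varphi_i^T$ below $N$).

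The implication (ii)$\Rightarrow$(iii) is immediate: if $\Phi$ is unitary its columns form an orthonormal basis, so for any unit $d$ we have $\sum_i|\<d,\varphi_i\>|^2 = \|d\|_2^2 = 1$, and since there are $N$ terms, the maximum term is at least $1/N$; taking square roots and then the minimum over $d$ yields the bound $1/\sqrt{N}$. Finally, (iii)$\Rightarrow$(i) is a direct application of Theorem~\ref{pro:ns}(a) in reverse is not available, but I would instead use the sufficient condition or argue directly: in fact the cleanest route is to close the cycle by showing (iii)$\Rightarrow$(ii) or (iii)$\Rightarrow$(i) using the minimal-ellipsoid machinery. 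Here I would invoke the necessary condition of Theorem~\ref{pro:ns}(a) together with tightness to pin down that equality $1/\sqrt{N}$ combined with $M=N$ leaves no slack: if (iii) holds, the inclusion $\frac{1}{\sqrt N}B\subset\co(\Phi_{\rm Sym})$ must be saturated, and with only $N$ symmetric pairs of extreme directions this forces the contact points to be an orthonormal set, returning us to (ii).

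An alternative and perhaps more economical organization is to prove (iii)$\Rightarrow$(i) by contrapositive through the minimal ellipsoid: if $\Phi$ is not scalable then by Theorem~\ref{pro:iff} its minimal ellipsoid $E(X)$ is not the unit ball, and the argument in the proof of Theorem~\ref{pro:ns}(b) shows some coordinate direction $d$ achieves $\max_i|\<d,\varphi_i\>|$ strictly below the relevant threshold; specializing the eigenvalue analysis to the case $M=N$ (where the contact points exhaust all frame vectors) sharpens this gap down to $1/\sqrt N$. I expect the genuine difficulty to be concentrated entirely in the (i)$\Rightarrow$(ii) step and in justifying that $M=N$ removes the slack present in the general-dimension gap of Remark~\ref{rem:N2}(c); the remaining arrows are short.
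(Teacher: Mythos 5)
Your arrows (i)$\Rightarrow$(ii) and (ii)$\Rightarrow$(iii) are correct, and your algebraic proof of (i)$\Rightarrow$(ii) is actually more self-contained than the paper's, which simply cites \cite[Corollary 2.8]{kopt13} for (i)$\Leftrightarrow$(ii): from $\Phi C\Phi^T=\lambda I$ with $\Phi$ invertible (any $N$ spanning vectors in $\R^N$ are independent) and $C$ invertible (a zero $c_i$ would make $\sum_i c_i\varphi_i\varphi_i^T$ have rank $<N$), one gets $C\Phi^T\Phi=\lambda I$, so the Gram matrix $\Phi^T\Phi=\lambda C^{-1}$ is diagonal with unit diagonal, hence $\Phi^T\Phi=I$. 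The genuine gap is the closing arrow (iii)$\Rightarrow$(i) (equivalently (iii)$\Rightarrow$(ii)), which is where the entire difficulty of the theorem is concentrated, and there your proposal offers only assertions. Saying that the inclusion $\frac{1}{\sqrt N}B\subset\co(\Phi_{\rm Sym})$ together with $M=N$ ``forces the contact points to be an orthonormal set'' is not an argument: condition \eqref{e:nec} is \emph{equivalent} to that inclusion (compare support functions of the two symmetric convex bodies), so this sentence is a verbatim geometric restatement of the very implication that needs proof.

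Your fallback route --- ``specializing the eigenvalue analysis in the proof of Theorem \ref{pro:ns}(b) to $M=N$'' --- would in fact fail. That analysis tests directions $d$ along the principal axes of the minimal ellipsoid $E(X)$ (and for invertible $\Phi\in\Fc_u(N,N)$ one has $X^{-1}=\Phi\Phi^T$, by the contact-point computation in the proof of Theorem \ref{thm:cha}), but principal-axis directions cannot certify failure of \eqref{e:nec}. Concretely, in $\R^3$ take
\[
\varphi_1=(a,0,b)^T,\quad \varphi_2=(-a,0,b)^T,\quad \varphi_3=(0,1,0)^T,\qquad a^2=0.55,\ b^2=0.45 .
\]
This is a non-unitary unit-norm frame ($\<\varphi_1,\varphi_2\>=b^2-a^2\neq0$) with $X^{-1}=\Phi\Phi^T=\diag(2a^2,1,2b^2)$, and the three principal directions $e_1,e_2,e_3$ give $\max_i|\<d,\varphi_i\>|$ equal to $a\approx0.74$, $1$, and $b\approx0.67$ respectively, all above $1/\sqrt3\approx0.58$; the direction that does violate (iii), namely $d=(0,b,1)^T/\sqrt{1+b^2}$, for which all three inner products equal $b/\sqrt{1+b^2}\approx0.56$, is not a principal axis. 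What (iii)$\Rightarrow$(ii) really requires is a single unit vector $d$ making \emph{all} $N$ inner products simultaneously smaller than $1/\sqrt N$, and this is exactly what the paper's Lemma \ref{lm:exist} supplies: since $\Phi$ is invertible one can solve $\Phi^Td_\Psi=\Psi g$, $g=\tfrac1{\sqrt N}(1,\ldots,1)^T$, for every sign pattern $\Psi=\diag(b_i)$, and averaging over signs gives $\Eb\|d_\Psi\|^2=\tfrac1N\sum_i\sigma_i^{-2}\ge 1$ by \eqref{equ:ine} (using $\sum_i\sigma_i^2=\|\Phi\|_F^2=N$), with equality if and only if $\Phi$ is unitary; so some sign pattern yields $\|d_\Psi\|>1$, and $d_\Psi/\|d_\Psi\|$ violates (iii). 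Without this sign-averaging idea (or an equivalent substitute), your cycle does not close.
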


In order to prove Theorem \ref{t:M=N}, we need the following lemma.

\begin{lemma}\label{lm:exist}
Let $\Phi\in \R^{N\times N}$ be a non-unitary invertible matrix with unit norm columns. Then there exists a vector $d\in\R^N$ with $\|d\|_2 > 1$ and a vector $a\in\R^N$ with $|a_i|=1/\sqrt N$ for all $i=1,\ldots,N$,
such that $\Phi^Td = a$.
\end{lemma}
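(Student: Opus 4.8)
The plan is to produce the vector $d$ and the vector $a$ explicitly by exploiting the failure of $\Phi$ to be unitary. Since $\Phi$ is invertible, define $d := (\Phi^T)^{-1}a = (\Phi^{-1})^Ta$ for a yet-to-be-chosen sign vector $a$ with $|a_i| = 1/\sqrt{N}$; then automatically $\Phi^Td = a$, so the content of the lemma is entirely in showing that $a$ can be chosen so that $\|d\|_2 > 1$. In other words, I would reduce the statement to a counting/averaging argument over the $2^N$ sign patterns.

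First I would fix notation: write $a = \tfrac{1}{\sqrt{N}}(\veps_1,\ldots,\veps_N)^T$ with $\veps_i \in \{-1,+1\}$, and let $G := \Phi^{-1}(\Phi^{-1})^T$ (a positive definite matrix, since $\Phi$ is invertible). Then $\|d\|_2^2 = a^T\Phi^{-1}(\Phi^{-1})^Ta = a^TGa$. The natural move is to average $a^TGa$ over all $2^N$ choices of sign vector $\veps$. Using $\Eb[\veps_i\veps_j] = \delta_{ij}$, the off-diagonal terms of $G$ wash out, and the average of $\|d\|_2^2$ equals $\tfrac{1}{N}\sum_{i=1}^N G_{ii} = \tfrac{1}{N}\tr(G) = \tfrac{1}{N}\|\Phi^{-1}\|_F^2$. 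Thus some sign pattern achieves $\|d\|_2^2 \ge \tfrac{1}{N}\|\Phi^{-1}\|_F^2$. It remains to show this lower bound strictly exceeds $1$ whenever $\Phi$ is a non-unitary matrix with unit-norm columns.

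The key inequality is therefore $\|\Phi^{-1}\|_F^2 > N$ for such $\Phi$, and the main obstacle is establishing this cleanly. I would argue via singular values: let $\sigma_1,\ldots,\sigma_N > 0$ be the singular values of $\Phi$. Then $\|\Phi^{-1}\|_F^2 = \sum_i \sigma_i^{-2}$ and $\|\Phi\|_F^2 = \sum_i \sigma_i^2 = N$, the latter because the columns are unit-norm (so $\tr(\Phi^T\Phi) = \sum_i \|\varphi_i\|_2^2 = N$). By the arithmetic-geometric-harmonic mean chain, in particular the left inequality of \eqref{equ:ine} applied with $a_i = \sigma_i^2$, one gets $\sum_i \sigma_i^{-2} \ge N^2 / \sum_i \sigma_i^2 = N$, with equality if and only if all $\sigma_i^2$ are equal. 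Equal singular values together with $\sum_i\sigma_i^2 = N$ forces every $\sigma_i = 1$, i.e.\ $\Phi$ is unitary. Since $\Phi$ is assumed non-unitary, the inequality is strict, giving $\|\Phi^{-1}\|_F^2 > N$ and hence $\tfrac{1}{N}\|\Phi^{-1}\|_F^2 > 1$.

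Putting the pieces together: the averaging argument yields a sign pattern $\veps$ with $\|d\|_2^2 \ge \tfrac{1}{N}\|\Phi^{-1}\|_F^2 > 1$, and the corresponding $d = (\Phi^{-1})^Ta$ and $a = \tfrac{1}{\sqrt{N}}\veps$ satisfy all the required properties, completing the proof. I expect the averaging step to be the conceptually central point, while the strict inequality $\|\Phi^{-1}\|_F^2 > N$ is the technical heart; both are short, so the only care needed is in verifying the equality case of the mean inequality to conclude that equality corresponds precisely to unitarity.
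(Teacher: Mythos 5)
Your proof is correct and follows essentially the same route as the paper: both average $\|d\|_2^2$ over all $2^N$ sign vectors $a$ (the paper phrases this with Bernoulli random variables) and reduce the strict bound to $\sum_i \sigma_i^{-2} > N$ via the harmonic--arithmetic mean inequality, with the equality case corresponding exactly to unitarity. Your only departure is cosmetic: you compute the average as $\tfrac{1}{N}\tr\bigl(\Phi^{-1}(\Phi^{-1})^T\bigr) = \tfrac{1}{N}\|\Phi^{-1}\|_F^2$ directly, which streamlines the paper's explicit SVD expansion of $\Eb\|d_\Psi\|^2$ but yields the identical quantity.
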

\begin{proof}
Let $\{b_i\}_{i=1} ^N$ be a sequence with each entry being a Bernoulli random variable, $\Psi = \diag(b_i)_{i=1}^N$, and $g=\frac{1}{\sqrt N}(1,...., 1)^T$. Suppose  $d_{\Psi}$ is the solution to
\begin{equation}\label{eq:linear}
\Phi^Td_{\Psi}=\Psi g.
\end{equation}
Let $\Phi^T=U\Sigma V^T$ be the singular value decomposition of $\Phi^T$, where $\Sigma=\text{diag}(\sigma_i)$. Observe that
\[
\sqrt N=\|\Phi\|_F=\|\Sigma\|_F=\sqrt{\sum\limits_{i=1}^{N}\sigma_i^2}.
\]
Hence, from \eqref{equ:ine} we obtain
$$
\sum_{i=1}^N\sigma_i^{-2} \ge\frac{N^2}{\sum_{i=1}^N\sigma_i^2} = N.
$$

On the other hand, from \eqref{eq:linear} we have
\[
V^Td_{\Psi}=\Sigma^{-1}U^T\Psi g.
\]
Next, we calculate the expectation $\Eb\|d_{\Psi}\|^2$. If it is greater than 1, then there must exist one instance of $d_{\Psi}$ with norm greater than 1, which makes the lemma hold. As $\Eb(b_ib_j) = \delta_{ij}$, we obtain
\begin{align*}
\Eb \|d_{\Psi}\|^2
&= \Eb \|V^Td_{\Psi}\|^2=\Eb \|\Sigma^{-1} U^T \Psi g\|^2\\
&= \frac 1 N\,\Eb \left(\sum\limits_{i}\sigma_i^{-2}\left(\sum\limits_{j}u_{ji}b_j\right)^2\right)\\
&= \frac 1 N\,\sum_i\sigma_i^{-2}\Eb\left(\sum_{j}u_{ji}^2 + \sum_j\sum_{k,k\neq j}u_{ji}u_{ki}b_jb_k\right)\\
&= \frac 1 N\,\sum_i\sigma_i^{-2}\sum_{j}u_{ji}^2 =\frac{1}{N}\sum\limits_{i}\sigma_i^{-2}\geq 1,
\end{align*}
while for the last inequality, equality holds only when all $\sigma_i$ are equal, i.e., $\Phi$ is unitary, which is ruled out by our assumption. Therefore the last inequality is strict.
\end{proof}

\begin{proof}[Proof of Theorem {\rm\ref{t:M=N}}]
The equivalence (i)$\Leftrightarrow$(ii) is easy to see and follows from, e.g., \cite[Corollary 2.8]{kopt13}. Moreover, (i)$\Rightarrow$(iii) is a direct consequence of Theorem \ref{pro:ns}(a). It remains to prove that (iii) implies (i). For this, we prove the contraposition. Suppose that $\Phi$ is not scalable. Then $\Phi$ is not unitary, and Lemma \ref{lm:exist} implies the existence of $d'\in\R^N$, $\|d'\|_2 > 1$, such that $|\<d',\vphi_i\>| = 1/\sqrt{N}$ for all $i=1,\ldots,N$. Hence, with $d = d'/\|d'\|_2$ we have $|\<d,\vphi_i\>| = (\|d'\|_2\sqrt{N})^{-1} < 1/\sqrt{N}$ for all $i=1,\ldots,N$. That is, (iii) does not hold, and the theorem is proved.
\end{proof}

\subsection{Estimation of the probability}\label{ss:prob}
With the help of Theorem \ref{pro:ns}, we now  estimate the probability for a frame to be  scalable when its vectors are drawn independently and uniformly from $\Sb^{N-1}$. First of all, it is easy to see the probability strictly increases as $M$ increases. Secondly, $\varphi_i\varphi_i^T\in\Sym_N$, where
$$
\Sym_N := \left\{A\in\R^{N\times N} : A = A^T\right\},
$$
which is a vector space of dimension $\frac{N(N+1)}{2}$. By \eqref{equ:scalar}, being scalable requires $I$ to be in the positive cone generated by $\{\varphi_i\varphi_i^T\}_{i=1}^M$. If $M<\frac{N(N+1)}{2}$, then this set  cannot be a basis of $\Sym_N$, so the chance for any symmetric matrix to be in the span of $\{\varphi_i\varphi_i^T\}_{i=1}^M$ is minimal,  which makes it even more difficult for $I$ to be in positive cone generated by this set. Therefore we expect the probability to be 0 when $M<\frac{N(N+1)}{2}$. Finally, as $M\rightarrow\infty$, we expect the probability of frames to be scalable to approach 1.

Let us first consider the case $N=2$ for which  the probability $P_{2,M}$ can be explicitly computed.

\begin{example}\label{exa:2}
If vectors $\varphi_1,\ldots,\vphi_M$ are drawn independently and uniformly from $\mathbb{S}^1$, then the probability of $\{\vphi_i\}_{i=1}^M$ to be a scalable frame in $\Fc_u(M,2)$ is given by
$$
P_{M,2}=1-\frac{M}{2^{M-1}},\quad M\geq2.
$$
\begin{proof}
First of all, define the angle of a vector $v$ as the angle between $v$ and positive $x$-axis, counterclockwise.
Among all the double cones that cover all the vectors in $\Phi_{\rm Sym}$, let $P_\Phi$ be the one with the smallest apex angle $\alpha$. It is known that $\Phi$ is scalable if and only if $\alpha\ge\pi/2$. Let $\varphi_\Phi$ be the ``right boundary'' of $P_\Phi$. To be rigorous,
Let $\varphi_\Phi$ be the vector with angle $\beta_0\in [0,\pi)$ such that for $\beta$ in some neighborhood of $\beta_0$ we have $(\cos\beta,\sin\beta)^T\in P_\Phi$ if $\beta > \beta_0$ and $(\cos\beta,\sin\beta)^T\notin P_\Phi$ if $\beta < \beta_0$. For fixed $i\in\{1,\ldots,M\}$ we then have
\begin{align*}
\Pr(\Phi&\text{ not scalable and } \varphi_\Phi=\pm\varphi_i )\\
&=\frac 1{2\pi}\int_0^{2\pi}\Pr\left( \Phi\text{ not scalable and }\varphi_\Phi = \pm\varphi_i\,|\,\angle\varphi_i = \beta \right)\,d\beta\\
&=\frac 1{2\pi}\int_0^{2\pi}\frac{1}{2^{M-1}}\,d\beta = \frac{1}{2^{M-1}}.
\end{align*}
Now, it follows that $\Pr(\Phi\text{ is not scalable}) = \sum_i \Pr(\Phi\text{ not scalable and } \varphi_\Phi=\pm\varphi_i )=M/2^{M-1}$.
\end{proof}
\end{example}

We can see in $\R^2$, as the number of frame vectors increases, the probability $P_{M,2}$ increases as well, starting from zero and eventually approaching 1. The critical point where the probability turns from zero to positive is $M=3=\frac{N(N+1)}{2}$, which meets our expectation. We will show that this is true for arbitrary dimension, and provide an estimate for the probability of frames being scalable. The following lemma completes the series of preparatory statements for the proof of our main theorem.

\begin{lemma}\label{lm_open}
If $\Phi=\{\varphi_i\}_{i=1}^M$ is a strictly scalable frame for $\R^N$ and $\{\varphi_i\varphi_i^T\}_{i=1}^M$ is a frame for $\Sym_N$, then there exists $\varepsilon > 0$ such that any frame $\Psi$ satisfying $\|\Psi-\Phi\|_F < \varepsilon$ is strictly scalable.
\end{lemma}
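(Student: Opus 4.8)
The plan is to reduce strict scalability of nearby frames to the continuity of the inverse of a fixed invertible linear map. The two hypotheses feed in as follows: strict scalability produces scalars $c_1^*,\dots,c_M^*>0$ with $\sum_{i=1}^M c_i^*\varphi_i\varphi_i^T = I$, while the assumption that $\{\varphi_i\varphi_i^T\}_{i=1}^M$ is a frame for $\Sym_N$ means these rank-one symmetric matrices span $\Sym_N$. Since $\dim\Sym_N = N(N+1)/2$, I can therefore extract an index set $S\subseteq\{1,\dots,M\}$ with $|S|=N(N+1)/2$ such that $\{\varphi_i\varphi_i^T\}_{i\in S}$ is a \emph{basis} of $\Sym_N$.

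For a frame $\Psi=\{\psi_i\}_{i=1}^M$ write $A_\Psi\colon\R^{|S|}\to\Sym_N$ for the linear map $(c_i)_{i\in S}\mapsto\sum_{i\in S}c_i\psi_i\psi_i^T$, and set $b_\Psi := I-\sum_{i\notin S}c_i^*\psi_i\psi_i^T\in\Sym_N$; that is, I freeze the non-basis scalars at their strictly positive values $c_i^*$. Expressing $A_\Psi$ as an $\tfrac{N(N+1)}2\times\tfrac{N(N+1)}2$ matrix in a fixed basis of $\Sym_N$, its determinant is a continuous function of $\Psi$ (the entries are quadratic in the columns of $\Psi$) and is nonzero at $\Psi=\Phi$, since $A_\Phi$ is an isomorphism by the choice of $S$. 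Hence $A_\Psi$ stays invertible on a neighbourhood of $\Phi$, and there I define $(c_i(\Psi))_{i\in S}:=A_\Psi^{-1}b_\Psi$, which depends continuously on $\Psi$ by Cramer's rule. By construction $\sum_{i\in S}c_i(\Psi)\psi_i\psi_i^T + \sum_{i\notin S}c_i^*\psi_i\psi_i^T = I$.

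Completing $c(\Psi)$ to all of $\{1,\dots,M\}$ by $c_i(\Psi):=c_i^*$ for $i\notin S$ then gives a continuous $\R^M$-valued map satisfying $\sum_{i=1}^M c_i(\Psi)\psi_i\psi_i^T=I$ and $c(\Phi)=(c_1^*,\dots,c_M^*)$, whose entries are all strictly positive. By continuity there is an $\varepsilon>0$ such that $\|\Psi-\Phi\|_F<\varepsilon$ forces $c_i(\Psi)>0$ for every $i$; such a $\Psi$ is then strictly scalable, with witnessing scalars $c_i(\Psi)$.

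I do not expect a serious obstacle here; the only point requiring care is the bookkeeping. One must not try to invert the (generally non-square, $M>N(N+1)/2$) full map $c\mapsto\sum_i c_i\psi_i\psi_i^T$ directly. Instead the non-basis coordinates must be frozen at the strictly positive values $c_i^*$ so that the remaining system is square and invertible, and strictness of the original scalars is exactly what guarantees that this freezing, together with continuity of the basis coordinates, keeps every coordinate positive on a neighbourhood. Equivalently, one could phrase the middle step via the implicit function theorem applied to $(\Psi,(c_i)_{i\in S})\mapsto A_\Psi((c_i)_{i\in S})+\sum_{i\notin S}c_i^*\psi_i\psi_i^T - I$, whose partial derivative in the $S$-variables at $(\Phi,(c_i^*)_{i\in S})$ is the isomorphism $A_\Phi$.
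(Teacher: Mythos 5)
Your proof is correct, and it reaches the conclusion by a genuinely different mechanism than the paper's. You use the spanning hypothesis to extract an index set $S$ with $|S|=\tfrac{N(N+1)}{2}$ so that $\{\varphi_i\varphi_i^T\}_{i\in S}$ is a basis of $\Sym_N$, freeze the scalars outside $S$ at their strictly positive values $c_i^*$, and solve the resulting square system via $A_\Psi^{-1}b_\Psi$; positivity near $\Phi$ then follows from soft continuity (nonvanishing of the determinant and Cramer's rule, or equivalently the implicit function theorem). The paper never passes to a subsystem: it works with the full non-square synthesis operator $F:\Diag_M\to\Sym_N$, $FD=\Phi D\Phi^T$, takes the positive definite $D$ with $FD=I$, and corrects it by $\Delta:=G^*(GG^*)^{-1}(F-G)D$, where $G$ is the synthesis operator of $\{\psi_i\psi_i^T\}_{i=1}^M$ --- that is, it solves $G(D+\Delta)=I$ using the pseudoinverse of $G$. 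The substance of the paper's argument is quantitative operator-norm bookkeeping: it shows the lower frame bound $A$ of $\{\varphi_i\varphi_i^T\}_{i=1}^M$ survives the perturbation in a controlled way, giving $\|(GG^*)^{-1}\|_{\rm op}\le 4/A$, and then bounds $\|\Delta\|_F\le\delta$ with $\delta$ chosen in advance so that $D+\Delta$ stays positive definite. What each approach buys: yours is shorter and more elementary, requiring only finite-dimensional linear algebra and no norm estimates; the paper's yields an explicit admissible $\varepsilon$ (in terms of $A$, $\|\Phi\|_F$, $\|F\|_{\rm op}$, $\|D\|_F$), perturbs all $M$ scalars symmetrically (its $\Delta$ is the minimal-norm correction) rather than privileging a basis subsystem, and establishes explicitly that $\{\psi_i\psi_i^T\}_{i=1}^M$ remains a frame for $\Sym_N$, which is the fact underlying Remark \ref{rem:open}. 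Your construction also delivers that last fact, albeit implicitly (invertibility of $A_\Psi$ means $\{\psi_i\psi_i^T\}_{i\in S}$ is already a basis), so nothing downstream of the lemma is lost.
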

\begin{proof}
Let $A$ be the lower frame bound of $\{\vphi_i\vphi_i^T\}_{i=1}^M$, where $\Sym_N$ is endowed with the Frobenius norm. Moreover, by $F : \Diag_M\to\Sym_N$ denote the synthesis operator of $\{\varphi_i\varphi_i^T\}_{i=1}^M$, where $\Diag_M$ denotes the space of all diagonal matrices in $\Sym_M$. Then $FD = \Phi D\Phi^T$, $D\in\Diag_M$. Since $\Phi$ is strictly scalable, there exists a positive definite $D\in\Diag_M$ such that $FD = I$.

Let $\delta > 0$ be so small that whenever $\Delta\in\Diag_M$ with $\|\Delta\|_F\le \delta$, we have that $D + \Delta$ remains positive definite. Moreover, let $\veps > 0$ be so small that
$$
\tau := (\veps + 2\|\Phi\|_F)\veps\le\max\left\{\frac{\sqrt A}2,\frac{\delta A}{2(\sqrt A + 2\|F\|_{\rm op})\|D\|_F}\right\}.
$$
Now, let $\Psi = \{\psi_i\}_{i=1}^M\subset\R^N$ be such that $\|\Phi - \Psi\|_F < \veps$. By $G : \Diag_M\to\Sym_N$ denote the synthesis operator of $\{\psi_i\psi_i^T\}_{i=1}^M$. We can see that $\|F - G\|_{\rm op}\le\tau$, since for any diagonal matrix $C$,
\begin{align*}
\|FC-GC\|_F&=\|\Phi C\Phi^T-\Psi C\Psi^T\|_F\leq\|\Phi C(\Phi^T-\Psi^T)\|_F+\|(\Phi-\Psi)C\Psi^T\|_F\\
&\leq \epsilon(\|\Phi\|_F+\|\Psi||_F)\|C\|_F\leq \epsilon(\epsilon+2\|\Phi\|_F)\|C||_F.
\end{align*}
 Hence, for $X\in\Sym_N$ we have
$$
\|G^*X\|_F\ge\|F^*X\|_F - \|(F-G)^*X\|_F\ge(\sqrt{A} - \tau)\|X\|_F\ge(\sqrt A/2)\|X\|_F.
$$
In particular, this implies that $\{\psi_i\psi_i^T\}_{i=1}^M$ is a frame for $\Sym_N$, and $\<GG^*X,X\>_F = \|G^*X\|_F^2\ge(A/4)\|X\|_F^2$ yields $\|(GG^*)^{-1}\|_{\rm op}\le 4/A$. Now, we define
$$
\Delta := G^*(GG^*)^{-1}(F-G)D\in\Diag_M.
$$
Then $G(D+\Delta) = GD + (F-G)D = FD = I$. Moreover,
$$
\|\Delta\|_F\le\|G\|_{\rm op}\|(GG^*)^{-1}\|_{\rm op}\|F-G\|_{\rm op}\|D\|_F\le((\sqrt A/2) + \|F\|_{\rm op})(4/A)\tau\|D\|_F\le\delta,
$$
so that $D + \Delta$ is positive definite. Consequently, $\Psi$ is strictly scalable.
\end{proof}

\begin{remark}\label{rem:open}
We mention that Lemma \ref{lm_open} implies that the set $\{\Phi\in SC_+(M,N):\{\varphi_i\varphi_i^T\}_{i=1}^M \text{ is a frame}\}$ is open.
\end{remark}

The statement and proof of the main theorem use the notion of  spherical caps. We define $R^N_a(C)$ to be the spherical cap in $\Sb^{N}$ with angular radius $a$, centered at $C$, i.e.
$$
R^N_a(C) = \left\{x\in\Sb^{N} : \<x,C\>\ge\cos(a)\right\}.
$$
\begin{center}
\includegraphics[width=0.2\textwidth]{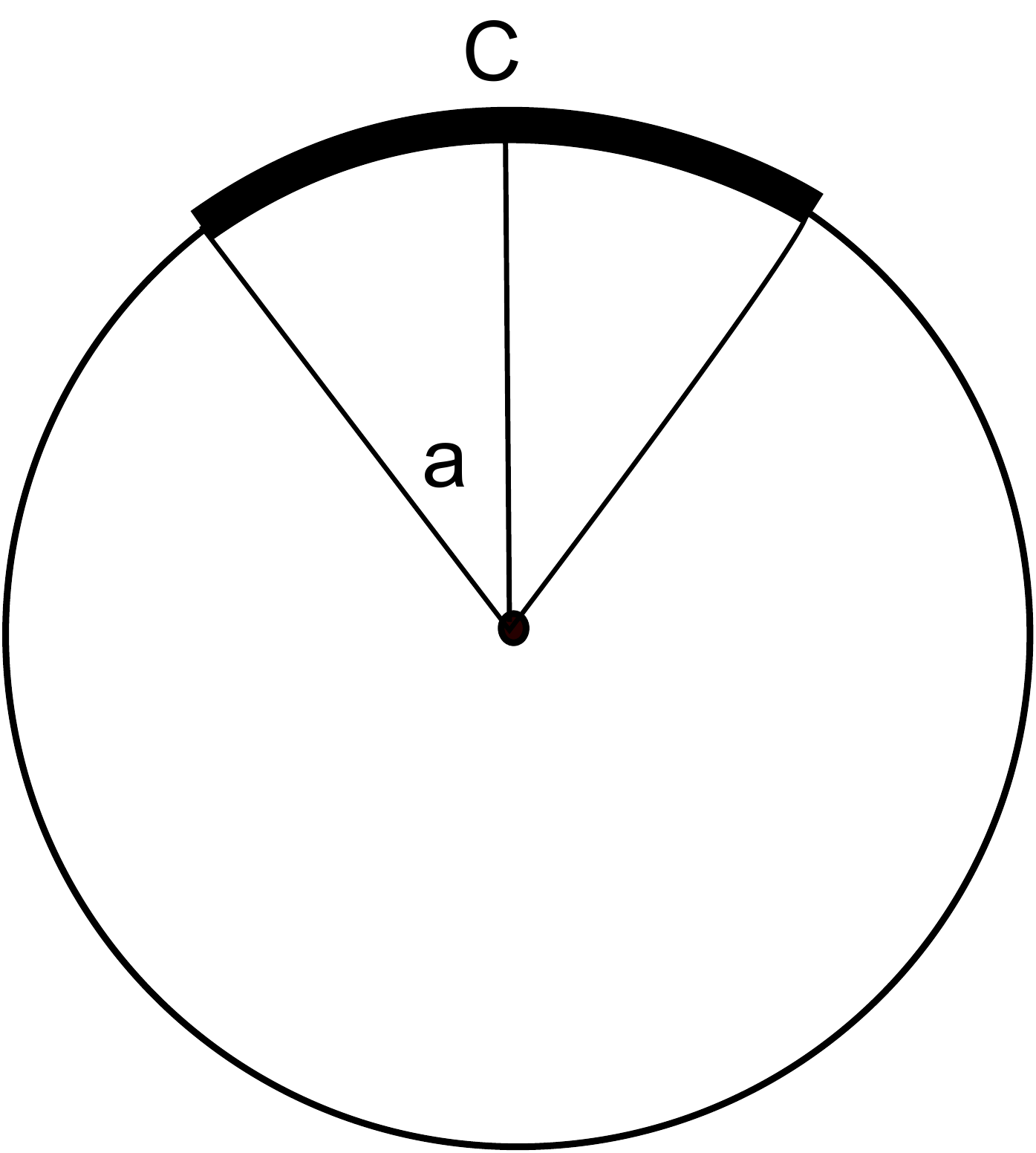}
\end{center}
By $A^N_a$ we denote the relative area  of $R^N_a(C)$ (ratio of area of $R^N_a(C)$ and area of $\Sb^{N}$).

\begin{theorem}\label{thm:pro}
Given $\Phi=\{\varphi_i\}_{i=1}^M\subset\R^N$, where each vector $\varphi_i$ is drawn independently and uniformly from $\mathbb{S}^{N-1}$, let $P_{M,N}$ denote the probability that $\Phi$ is scalable. Then the following holds:
\begin{enumerate}
\item[(i)]   When $M<\frac{N(N+1)}{2}$, $P_{M,N}=0$
\item[(ii)]  When $M\geq\frac{N(N+1)}{2}$, $P_{M,N}>0$ and
$$
C_N\left(1 - A_\alpha^{N-1}\right)^M\ge\,1 - P_{M,N}\,\ge\,\left(1-A^{N-1}_{a}\right)^{M-N},
$$
where
$$
\alpha = \frac{1}{2}\arccos\sqrt{\frac{N-1}{N}},\quad
a = \arccos\frac 1{\sqrt N},
$$
and where $C_N$ is the number of caps with angular radius $\alpha$ needed to cover $\Sb^{N-1}$. Consequently, $\lim_{M\rightarrow\infty}P_{M,N}=1$.
\end{enumerate}
\end{theorem}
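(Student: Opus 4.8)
The plan is to handle the three assertions separately, using the deterministic criteria of Theorem~\ref{pro:ns} as the bridge between scalability and the spherical geometry of the $\varphi_i$, and to read off the final limit from the upper bound.

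For part (i), I would first show $P_{M,N}=0$ when $M<\tfrac{N(N+1)}{2}$ by a dimension/measure-zero argument. By \eqref{equ:scalar}, scalability forces $I\in\linspan\{\varphi_i\varphi_i^T\}$ inside $\Sym_N$, which has dimension $\tfrac{N(N+1)}{2}>M$. The event $I\in\linspan\{\varphi_i\varphi_i^T\}$ is cut out by the simultaneous vanishing of all $(M{+}1)\times(M{+}1)$ minors of the vectorized family $(I,\varphi_1\varphi_1^T,\dots,\varphi_M\varphi_M^T)$; each such minor is a polynomial in the coordinates of the $\varphi_i$, and it is not identically zero on $(\Sb^{N-1})^M$ because one can choose unit vectors making $I,\varphi_1\varphi_1^T,\dots,\varphi_M\varphi_M^T$ linearly independent (possible since $M+1\le\tfrac{N(N+1)}{2}$). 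A nonzero real-analytic equation has measure-zero solution set, giving $P_{M,N}=0$. For $M\ge\tfrac{N(N+1)}{2}$, I would exhibit a single strictly scalable frame of $\tfrac{N(N+1)}{2}$ unit vectors whose rank-one projections form a basis (hence a frame) of $\Sym_N$; Lemma~\ref{lm_open} (cf.\ Remark~\ref{rem:open}) then supplies an open neighborhood of it consisting of scalable frames. Since this neighborhood has positive measure and any frame containing a scalable subframe is itself scalable (set the extra scalars to $0$), conditioning on the first $\tfrac{N(N+1)}{2}$ vectors landing there gives $P_{M,N}>0$.

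For the upper bound in (ii) I would use the sufficient condition of Theorem~\ref{pro:ns}(b): if $\Phi$ is not scalable, there is a unit vector $d$ with $|\<d,\varphi_i\>|<\sqrt{(N-1)/N}=\cos 2\alpha$ for every $i$, i.e.\ every $\varphi_i$ makes angle $>2\alpha$ with $d$. Cover $\Sb^{N-1}$ by $C_N$ caps of angular radius $\alpha$ and let $R_\alpha(C_j)$ be one containing $d$. If some $\varphi_i$ lay in $R_\alpha(C_j)$ it would make angle $\le 2\alpha$ with $d$, a contradiction; hence all $\varphi_i$ avoid the single cap $R_\alpha(C_j)$. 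A union bound over the $C_N$ caps then yields $1-P_{M,N}\le C_N(1-A^{N-1}_\alpha)^M$, since a single uniform vector avoids a fixed cap with probability $1-A^{N-1}_\alpha$. The point worth stressing is that only one cap per term enters the union bound, which is exactly what produces the single-cap factor and the exponent $M$.

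For the lower bound I would dualize via the necessary condition of Theorem~\ref{pro:ns}(a): if some unit $d$ satisfies $|\<d,\varphi_i\>|<1/\sqrt N=\cos a$ for all $i$, then $\Phi$ is not scalable; equivalently, $\Phi$ is non-scalable whenever the caps $R_a(\pm\varphi_i)$ fail to cover $\Sb^{N-1}$. The aim is to manufacture such an uncovered direction from the data so that $N$ of the vectors are consumed in pinning down the direction while the remaining $M-N$ each contribute an independent single-cap factor $1-A^{N-1}_a$. A natural route takes $d$ orthogonal to $N-1$ of the vectors (which then satisfy the constraint automatically) and, using that scalability is invariant under the sign flips $\varphi_i\mapsto\pm\varphi_i$, reflects the remaining vectors into the halfspace $\{\<\cdot,d\>\ge 0\}$ so that double-cap avoidance collapses to single-cap avoidance; since $d$ is independent of those remaining vectors, conditioning on $d$ and multiplying the per-vector probabilities produces a product bound of the desired shape $(1-A^{N-1}_a)^{M-N}$. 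Finally, $\lim_{M\to\infty}P_{M,N}=1$ is immediate from the upper bound, because $0<1-A^{N-1}_\alpha<1$ forces $C_N(1-A^{N-1}_\alpha)^M\to 0$.

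I expect the genuine difficulty to lie in the lower bound. Reflecting the surviving vectors into a halfspace concentrates their law on a hemisphere and naively reintroduces a factor $2$ (a \emph{fixed} direction gives the weaker double-cap factor $1-2A^{N-1}_a$ per vector), so the real work is to set up the adaptive choice of $d$ and the accompanying reflection carefully enough that the $M-N$ surviving vectors genuinely contribute the single-cap factors $1-A^{N-1}_a$ and the exponent comes out as $M-N$ rather than $M-N+1$. Making this geometric bookkeeping exact, while keeping the chosen direction independent of the vectors it must separate, is the step I anticipate being the main obstacle.
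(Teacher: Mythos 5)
Your treatment of part (i), of the positivity claim, and of the upper bound is correct and follows essentially the same route as the paper: the polynomial/measure-zero argument for (i) (the paper phrases it as linear dependence of $\{\varphi_1\varphi_1^T,\dots,\varphi_M\varphi_M^T,I\}$ and passes to the Gaussian measure), Lemma~\ref{lm_open} plus a positive-measure open neighborhood for $P_{M,N}>0$, and the cap-covering union bound for $1-P_{M,N}\le C_N(1-A_\alpha^{N-1})^M$. The one soft spot in these parts is your unproved assertion that there exists a \emph{strictly} scalable unit-norm frame whose rank-one projections form a basis of $\Sym_N$: this needs an argument, since the obvious candidate $\{e_i\}\cup\{(e_i+e_j)/\sqrt2\}_{i<j}$ has spanning projections but is \emph{not} strictly scalable (the off-diagonal entries force all cross-term weights to vanish); the paper gets existence by taking a frame $\{v_i\}$ with $\linspan\{v_iv_i^T\}=\Sym_N$ from \cite{cc12}, applying $S^{-1/2}$ to make it tight, and normalizing.

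The genuine gap is the lower bound, exactly where you locate it --- and your suspicion about the single-cap factor is not a technical hurdle you failed to clear but a fatal obstruction: the stated bound $1-P_{M,N}\ge\left(1-A_a^{N-1}\right)^{M-N}$ is false. For $N=2$ one has $a=\pi/4$, $A_a^1=1/4$, while Example~\ref{exa:2} gives $1-P_{M,2}=M/2^{M-1}$ exactly; the claim would read $M/2^{M-1}\ge(3/4)^{M-2}$, which already fails at $M=4$ (namely $1/2<9/16$) and for every larger $M$. The paper's own proof slips at precisely the step you were guarding against: it declares $\Phi$ non-scalable on the event that $\Psi=\{\varphi_i\}_{i=1}^N$ is non-unitary and $\varphi_i\notin R_a^{N-1}(d_\Psi)$ for $i>N$, but cap avoidance is only the one-sided inequality $\langle d_\Psi,\varphi_i\rangle<1/\sqrt{N}$, whereas the contrapositive of Theorem~\ref{pro:ns}(a) requires $|\langle d_\Psi,\varphi_i\rangle|<1/\sqrt{N}$; for instance $\varphi_i=-d_\Psi$ avoids the cap yet destroys the certificate, and one can build scalable frames inside that event. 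The honest conclusion of this line of argument --- yours with the reflection done correctly, or the paper's with the absolute values restored --- comes from demanding that each $\varphi_i$, $i>N$, avoid \emph{both} caps $R_a^{N-1}(d_\Psi)$ and $R_a^{N-1}(-d_\Psi)$ (disjoint, since $a<\pi/2$); conditioning on $\Psi$, invoking Theorem~\ref{t:M=N} to produce $d_\Psi$, and integrating yields $1-P_{M,N}\ge\left(1-2A_a^{N-1}\right)^{M-N}$. That corrected bound is consistent with Example~\ref{exa:2} (it reduces to $2^{-(M-2)}\le M/2^{M-1}$) and suffices for all the qualitative conclusions, since $\lim_{M\to\infty}P_{M,N}=1$ follows from the upper bound alone.
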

\begin{proof}
By $\mu_u$ we denote the uniform measure on $\Sb^{N-1}$ and by $\mu_G$ the Gaussian measure on $\R^N$. Furthermore, on $(\Sb^{N-1})^M$ and $(\R^N)^M$ define the product measures
$$
\mu_u^k := \bigotimes_{j=1}^k\mu_u
\qquad\text{and}\qquad
\mu_G^k := \bigotimes_{j=1}^k\mu_G,
$$
respectively. For a set $B\subset(\Sb^{N-1})^k$, $k\in\N$, we define
$$
B' := \left\{(x_1,\ldots,x_k)\in(\R^N\setminus\{0\})^k : \left(\frac{x_1}{\|x_1\|_2},\ldots,\frac{x_k}{\|x_k\|_2}\right)\in B\right\}.
$$
Since $\mu_u(A) = \mu_G(A')$ for any $A\subset\Sb^{N-1}$, we have
\begin{equation}\label{e:Gauss_unif}
\mu_u^k(B) = \mu_G^k(B')\quad\text{for any }B\subset(\Sb^{N-1})^k.
\end{equation}

(i). Set $K = N(N+1)/2$. It suffices to show $P_{M,N} = 0$ only for $M = K-1$. For this, let
$$
B := \left\{(\vphi_1,\ldots,\vphi_M)\in(\Sb^{N-1})^M : \{\vphi_1\vphi_1^T,\ldots,\vphi_M\vphi_M^T,I\}\text{ is linearly dependent}\right\}.
$$
Then
$$
B' = \left\{(\vphi_1,\ldots,\vphi_M)\in(\R^N\setminus\{0\})^M : \{\vphi_1\vphi_1^T,\ldots,\vphi_M\vphi_M^T,I\}\text{ is linearly dependent}\right\}.
$$
This set, seen as a subset of $\R^{NM}$, is contained in the zero locus of a polynomial in the entries of the $\vphi_i$'s. Therefore, the Lebesgue measure of $B'$ is zero. But this shows that $\mu_G^M(B') = 0$ since $\mu_G^M$ is absolutely continuous with respect to the Lebesgue measure. Consequently, we obtain
$$
P_{M,N} = \mu_u^M(\{\Phi\in\fr_u(M,N) : \Phi\text{ scalable}\})\le\mu_u^M(B) = \mu_G^M(B') = 0.
$$

(ii). With Lemma \ref{lm_open}, we only need to prove the existence of a strictly scalable unit norm frame $\Phi$ such that $\{\varphi_i\varphi_i^T\}_{i=1}^M$ spans $\Sym_N$. For this, we note that by \cite[Theorem 2.1]{cc12}, there exists a frame $V=\{v_i\}_{i=1}^M$ such that $\{v_iv_i^T\}_{i=1}^M$ spans $\Sym_N$. Let $S$ be its frame operator, and $\varphi_i=S^{-1/2}v_i$. Therefore $\Phi=\{\varphi_i\}$ is a tight frame, thus strictly scalable. It is also easy to check that the  linear map $T: \Sym_N\to\Sym_N$, defined by $T(A) := S^{-1/2}AS^{-1/2}$, $A\in\Sym_N$, is invertible and maps $v_iv_i^T$ to $\varphi_i\varphi_i^T$. Therefore, $\{\varphi_i\varphi_i^T\}_{i=1}^M$ also spans $\Sym_N$. Finally, we normalize $\Phi$ to attain the desired frame.

For the estimate on $1 - P_{M,N}$, we first prove the right hand side inequality. For this, we put $\Psi := \{\vphi_i\}_{i=1}^N$ and $\Upsilon := \{\vphi_i\}_{i=N+1}^M$. If $\Psi$ is not unitary, by Theorem \ref{t:M=N} there exists $d_\Psi\in\Sb^{N-1}$ such that $|\<d_\Psi,\vphi_i\>| < 1/\sqrt N$ and hence $\vphi_i\notin R_a^{N-1}(d_\Psi)$ for $i=1,\ldots,N$. Therefore, if $\Psi$ is not unitary and $\vphi_{N+1},\ldots,\vphi_M\notin R_a^{N-1}(d_\Psi)$ then $\Phi$ is not scalable by Theorem \ref{pro:ns}(a). This yields
\begin{align*}
1 - P_{M,N}
&\ge\mu_u^M\left(\left\{\Phi : \Psi\notin\sca(N,N),\;\forall\vphi\in\Upsilon : \vphi\notin R_a^{N-1}(d_\Psi)\right\}\right)\\
&= \int\limits_{\sca(N,N)^c}\mu_u^{M-N}\left(\left\{\Upsilon\in(\Sb^{N-1})^{M-N} : \forall\vphi\in\Upsilon : \vphi\notin R_a^{N-1}(d_\Psi)\right\}\right)\,d\mu_u^N(\Psi)\\
&= \int\limits_{\sca(N,N)^c}\mu_u^{M-N}\left(\left(\left[R_a^{N-1}(d_\Psi)\right]^c\right)^{M-N}\right)\,d\mu_u^N(\Psi)\\
&= \left(1 - A_a^{N-1}\right)^{M-N}\int\limits_{\sca(N,N)^c}\,d\mu_u^N(\Psi)\\
&= \left(1 - A_a^{N-1}\right)^{M-N}\left(1-\mu_u^N(\sca(N,N))\right).
\end{align*}
But $\mu_u^N(\sca(N,N)) = 0$ by (i), and hence the inequality follows.

For the left hand side inequality, let  $\{R_j\}_{j=1}^C$ be a cover of $\Sb^{N-1}$ with spherical caps of angular radius $\alpha$. Define the event $E:=\{\forall j\in\{1,2,\cdots,C\}\exists i \text{ such that } \vphi_i\in R_j\}$. If event $E$ holds, whenever $d\in\Sb^{N-1}$, there exists $j$ such that $d\in R_j$. Thus, there also exists $i$ such that $d$ and $\varphi_i$ are in the same spherical cap, which means $\langle d,\varphi_i\rangle\geq\sqrt{\frac{N-1}{N}}$. Therefore, Theorem \ref{pro:ns}(b) yields that $\Phi$ is scalable. So, we have
\begin{align*}
P_{M,N}
&\ge \Pr(E) = 1 - \Pr\left(\exists j\,\forall i : \vphi_i\in R_j^c\right)\\
&= 1 - \Pr\left(\bigcup_j\{\forall i : \varphi_i\in\R_j^c\}\right) \\
&\geq 1-\sum_j\Pr\left(\{\forall i : \varphi_i\in\R_j^c\}\right)\\
&= 1 - \sum_j \left(1 - A_\alpha^{N-1}\right)^M = 1 - C\left(1 - A_\alpha^{N-1}\right)^M.
\end{align*}
This finishes the proof of the theorem.
\end{proof}

\begin{remark}
An upper bound on $C_N$ can be found in \cite[Theorem 1.2]{BCL10} as
\[
C_N\leq 3N+2+\sqrt{N}(N+1)\cos(a)(A^{N-1}_{a})^{-2}\left(\frac{1}{2A^{N-1}_{a}}\right)^{N}.
\]
\end{remark}

\section{Acknowledgments}
G. Kutyniok acknowledges support by the Einstein Foundation Berlin,
by the Einstein Center for Mathematics Berlin (ECMath), by Deutsche Forschungsgemeinschaft (DFG) Grant KU 1446/14, by the DFG Collaborative Research Center SFB/TRR 109 "Discretization in Geometry and Dynamics", and by the DFG Research Center {\sc Matheon} "Mathematics for key technologies" in Berlin. Also F. Philipp thanks the {\sc Matheon} for their support. K.~A.~Okoudjou was supported by the Alexander von Humboldt foundation. He would also like to express his gratitude to  the Institute of Mathematics at the Technische Universit\"at Berlin for its hospitality while part of this work was completed. R.~Wang was supported by CRD Grant DNOISE 334810-05 and by the industrial sponsors of the Seismic Laboratory for Imaging and Modelling: {BG Group}, BGP, {BP}, {Chevron}, ConocoPhilips, {Petrobras}, PGS, Total SA, and {WesternGeco}. Furthermore, the authors thank Anton Kolleck (TU Berlin) for valuable discussions.

\end{document}